\newtheorem{lemma}{Lemma}
\newtheorem{remark}{Remark}
\newcommand\bxi{\boldsymbol{\xi}}
\newcommand\bx{\boldsymbol{x}}
\newcommand{\kn}{\mathrm{Kn}}
\newcommand{\stf}{\mathrm{stf}}
\newcommand{\vb}{\boldsymbol{v}}
\newcommand{\qbarb}{\boldsymbol{\bar{q}}}
\newcommand{\qb}{\boldsymbol{q}}
\newcommand{\sigmabarb}{\boldsymbol{\bar{\sigma}}}
\newcommand{\sigmab}{\boldsymbol{\sigma}}
\newcommand{\sigmabar}{\bar{\sigma}}
\newcommand\Rb{\boldsymbol{R}}
\newcommand{\vertiii}[1]{{\left\vert\kern-0.25ex\left\vert\kern-0.25ex\left\vert #1 
    \right\vert\kern-0.25ex\right\vert\kern-0.25ex\right\vert}}
\title[R13 equations with Onsager boundary conditions]{Time-dependent Regularized 13-Moment Equations with Onsager Boundary Conditions in the Linear Regime}
\author{Bo Lin}
\address[Bo Lin]{Department of Mathematics, National University of Singapore, Singapore 119076}
\email{linbo@u.nus.edu}
\author{Haoxuan Wang}
\address[Haoxuan Wang]{Department of Mathematics, National University of Singapore, Singapore 119076}
\email{e0321206@u.nus.edu}
\author{Siyao Yang}
\address[Siyao Yang]{Committee on Computational and Applied Mathematics, Department of Statistics, University of Chicago, Chicago, IL 60637 USA}
\email{siyaoyang@uchicago.edu}
\author{Zhenning Cai}
\address[Zhenning Cai]{Department of Mathematics, National University of Singapore, Singapore 119076}
\email{matcz@nus.edu.sg}
\keywords{super-Burnett order; regularized 13-moment equations; Onsager boundary conditions}
\begin{document}

\begin{abstract}
We develop the time-dependent regularized 13-moment equations for general elastic collision models under the linear regime. Detailed derivation shows the proposed equations have super-Burnett order for small Knudsen numbers, and the moment equations enjoy a symmetric structure. A new modification of Onsager boundary conditions is proposed to ensure stability as well as the removal of undesired boundary layers. Numerical examples of one-dimensional channel flows is conducted to verified our model.
\end{abstract}

\maketitle

\section{Introduction}
\label{sec:introduction}
The study of rarefied gas dynamics has evolved over more than a century, where kinetic theory is crucial for accurately describing its fluid mechanics via a microscopic explanation for macroscopic behavior. The fundamental equation of the kinetic theory, the Boltzmann equation, provides a detailed statistical description of the gas dynamics. The development of Boltzmann solvers has progressed 
significantly in \cite{BGK1954,BirdDSMC,Mieussens2000jcp,Dimarco_Pareschi_2014_acta,Gamba2017fastspectral,Dimarco2018jcp,Alekseenko2022jcp_data,lin2024sinum},
nevertheless, accurately solving the Boltzmann equation for practical problems remains a complex and resource-intensive task mainly due to its high dimensionality, nonlinear collisions and complex boundary conditions.

Instead of directly solving the Boltzmann equation, many researchers focus on the macroscopic moments of the distribution function and modify classical fluid equations, such as the Navier-Stokes equations, to describe the mechanics of moderately rarefied gases. Two {classical} 
techniques used to derive higher-order (in terms of Knudsen number) macroscopic fluid equations from the Boltzmann equation are the Grad's moment method in \cite{Grad13_1949} and the Chapman-Enskog expansion in \cite{chapman1970mathematical}. Despite their successes in capturing near-equilibrium effects, the Grad's 13-moment equations may experience a loss of hyperbolicity and unphysical subshocks. Moreover, the second-order Burnett and third-order super-Burnett equations, which are derived from the Chapman-Enskog expansion, suffer from instability.
Recent researches address these limitations by employing various regularization techniques (\cite{jin2001regularization,muller2003extended,StruchtrupR13_2003,bobylev2006instabilities,Ottinger2010thermo,Han2019learningclosure}) to enhance the accuracy and stability of the methods across a broader range of rarefied gas dynamics. {Besides the regularization by the above two classical techniques, researchers have also explored alternative ways to address their limitations and derive higher-order equations such as the Onsager-consistent approaches (\cite{Onsager_2019book,jadhav2023oburnett}).}

In this work, we will investigate the regularized version of Grad's 13-moment equations proposed in \cite{StruchtrupR13_2003}, which can be interpreted as a Chapman–Enskog like procedure applied to the Grad's 13 moments. This regularization, abbreviated as R13, has been developed for both Maxwell (\cite{Struchtrup2005macroscopic}) and non-Maxwell molecules (\cite{Struchtrup2013pof, Cai2020regularized}) and validated across diverse applications (\cite{Taheri2012pre_r13_app,rana2015numerical,r13_compare_2017}). The R13 equations are attractive since the equations include only second-order derivatives while attaining the super-Burnett order. Here the super-Burnett order means that the super-Burnett equations, which are the result of the Chapman-Enskog expansion up to the third order, can be derived from the R13 equations. Note that the original super-Burnett equations contain fourth-order derivatives, causing significant difficulties in the numerical discretization and formulation of boundary conditions. {Due to the inclusion of corrections from higher-order moments through terms derived from the Chapman-Enskog expansion}, the R13 equations can describe various rarefaction effects such as Knudsen boundary layers and heat fluxes from cold areas to hot areas (\cite{Rana2013}). Also, the R13 equations are equipped with reliable boundary conditions (\cite{Torrilhon2008, Struchtrup2017}) and have been shown to be robust in numerical simulations (\cite{Rana2013}). Recently in \cite{Torrilhon2017}, Onsager boundary conditions are proposed to ensure the stability of boundary value problems of linear R13 equations for Maxwell molecules. However, it is not straightforward to generalize these Onsager boundary conditions to non-Maxwell molecules, even in the linear case. The main reason is the loss of a symmetric structure required by the Onsager boundary conditions (see \cite{bunger2023KRM}). To remedy this, in \cite{yang2024siap}, the linear R13 equations for arbitrary elastic collision models were re-derived using a different method. The new derivation relies on the simpler form of the Chapman-Enskog expansion for steady-state equations compared with the time-dependent case, so that it cannot be applied directly to dynamic problems.

The purpose of this work is to seek possibilities to generalize the Onsager boundary conditions to time-dependent R13 equations in the linear regime. This also requires new derivations of R13 equations based on the linearized Boltzmann equation for the distribution function $f$:
\begin{equation}
\label{eq:Boltzmann}
\frac{\partial f}{\partial t} + \xi_k \frac{\partial f}{\partial x_k} = \frac{1}{\kn} \mathcal{L}f,
\end{equation}
where $\mathcal{L}$ is a self-adjoint negative semidefinite linear operator; $x_k$ and $\xi_k$ are position and velocity variables, respectively; $\kn$ is the Knudsen number defined as the ratio of the mean free path of the molecules to macroscopic length scale. Note that here we have used the summation convention: when the same index appears twice, it is assumed that the sum of applied for this index, and the range is from 1 to 3. Due to the linearization, the model is mainly applicable for fluids with low speed such as microflows, and numerical experiments show good agreement with results obtained from the DSMC method (see \cite{Wu2014}). In order that the Onsager boundary conditions can be applied, the linear moment equations should take the form
\begin{equation}
\label{eq:moment}
\mathbf{A}_0 \frac{\partial \boldsymbol{u}}{\partial t} + \mathbf{A}_k \frac{\partial \boldsymbol{u}}{\partial x_k} = \frac{1}{\kn} \mathbf{L} \boldsymbol{u},
\end{equation}
where $\mathbf{A}_0$ is the symmetric mass matrix, $\mathbf{A}_k$ is the symmetric discretization of operator $\xi_k$, and $\mathbf{L}$ is the discretization of operator $\mathcal{L}$ which is symmetric negative semidefinite. The matrix $\mathbf{A}_0$ is allowed to have zero eigenvalues to cover the parabolic scenarios such as the linearized R13 equations, which has been used to study phenomena including evaporation \cite{Claydon2017} and rarefied effects in flows around a sphere \cite{Beckmann2018}. It should be noted that the symmetric structure in the above matrices plays a key role in obtaining the second law of thermodynamics for our moment equations. The similar idea of maintaining symmetry to comply with thermodynamics has also been applied to the derivation of other moment equations (\cite{Agrawal2016pre, Agrawal2023aip}).

When considering thermodynamics of \eqref{eq:moment} within a bounded domain $\Omega$, the entropy production from its boundary should be taken into account. This can be seen by multiplying $\boldsymbol{u}^{\intercal}$ on \eqref{eq:moment} and integrating over $\Omega$, yielding
\begin{equation} \label{eq:L2stability}
\frac{\mathrm{d}}{\mathrm{d}t} \int_{\Omega} \frac{1}{2} \boldsymbol{u}^{\intercal} \mathbf{A}_0 \boldsymbol{u} \mathrm{d} \boldsymbol{x} + \int_{\partial \Omega} \frac{1}{2} \boldsymbol{u}^{\intercal} \mathbf{A}_k n_k \boldsymbol{u} \mathrm{d} \boldsymbol{s} = \frac{1}{\kn} \int_{\Omega} \boldsymbol{u}^{\intercal} \mathbf{L} \boldsymbol{u} \mathrm{d} \boldsymbol{x} \leqslant 0,
\end{equation}
where $\boldsymbol{n} = (n_1, n_2, n_3)^{\intercal}$ denotes the outward normal vector at boundary $\partial \Omega$. Hence, the
second law of thermodynamics can be achieved if the flux across the boundary is bounded, i.e., 
\begin{equation}\label{eq:stablebc}
    - \boldsymbol{u}^{\intercal} n_k \mathbf{A}_k \boldsymbol{u} \leqslant \boldsymbol{g}_{\mathrm{ext}}^{\intercal} \mathbf{M} \boldsymbol{g}_{\mathrm{ext}},
\end{equation}
for a given matrix $\mathbf{M}$ and an external source vector $\boldsymbol{g}_{\mathrm{ext}}=\boldsymbol{g}_{\mathrm{ext}}(t,\boldsymbol{x})$ from boundary conditions. With the above entropic stability, the uniqueness of the solution of \eqref{eq:moment} can be achieved.

The stable boundary conditions are studied in \cite{sarna2018stable} and the authors therein propose a specific form of boundary conditions that fulfills \eqref{eq:stablebc}, which is called Onsager boundary conditions. To specify these boundary conditions, we can introduce an orthogonal matrix $\mathbf{P}$ that decomposes moments $\boldsymbol{u}$ into two subsets $\boldsymbol{u}_{\text{odd}}$ and $\boldsymbol{u}_{\text{even}}$ as
\begin{equation}\label{eq:uoddueven}
    \mathbf{P} \boldsymbol{u} =
    \begin{pmatrix}
        \boldsymbol{u}_{\text{odd}} \\
        \boldsymbol{u}_{\text{even}}
    \end{pmatrix}, \qquad
    \mathbf{P} n_k \mathbf{A}_k \mathbf{P}^{\intercal} =
    \begin{pmatrix}
        0 & \mathbf{A}_{\text{oe}} \\
        \mathbf{A}_{\text{eo}} & 0
    \end{pmatrix}.
\end{equation}
The odd moments $\boldsymbol{u}_{\text{odd}}$ consist of quantities that change sign when the normal vector $\boldsymbol{n}$ is flipped, while the even moments $\boldsymbol{u}_{\text{even}}$ remain unchanged by this flipping. For example, the normal velocity, shear stress and normal heat flux belong to $\boldsymbol{u}_{\text{odd}}$, whereas the density, temperature and tangential velocity belong to $\boldsymbol{u}_{\text{even}}$. The anti-diagonal block structure of the matrix in \eqref{eq:uoddueven} is due to the fact that the normal flux of an odd moment corresponds to an even moment, and conversely, the normal flux of an even moment corresponds to an odd moment. The symmetricity of $\mathbf{A}_k$ implies the matrix in \eqref{eq:uoddueven} is symmetric and therefore $\mathbf{A}_{\text{eo}} = \mathbf{A}_{\text{oe}}^{\intercal}$. The Onsager boundary conditions in \cite{sarna2018stable} suggests that for a full row rank $\mathbf{A}_{\text{oe}}$, the following form of boundary conditions
\begin{equation}\label{eq:r13bc}
    \boldsymbol{u}_{\text{odd}} = \mathbf{Q} \mathbf{A}_{\text{oe}} (\boldsymbol{g}_{\mathrm{ext}} - \boldsymbol{u}_{\text{even}}),
\end{equation}
with matrix $\mathbf{Q}$ being negative semidefinite yield \eqref{eq:stablebc}, and therefore $L^2$ stability is obtained. However, as shown in \cite{yang2024siap}, for general collision models, the matrix $\mathbf{A}_{\mathrm{oe}}$ can be rank-deficient, which requires an extra step that reduces the number of boundary conditions to the rank of $\mathbf{A}_{\mathrm{oe}}$. Such a technique is also required in this work.

Two major difficulties were encountered when we tried to derive time-dependent linear R13 equations for general collision models. The first is to obtain the form \eqref{eq:moment} while maintaining the super-Burnett order. To achieve this, instead of the classical approach that uses the stress tensor and the heat flux in the equations, we change them to new variables that better match the order of magnitude method. Such an approach can lead to both the desired order of accuracy and the desired structure of equations, so that the technique introduced in \cite{bunger2023KRM} and tweaked in \cite{yang2024siap} can be employed to derive the Onsager boundary conditions. However, the solution exhibits unphysical boundary layers, giving us a second major difficulty in deriving the new model. To address this problem, we revise the original boundary conditions by enforcing equalities setting the coefficients of these boundary layers to zero. Both steady-state and unsteady examples are presented to show the solutions of these new equations.


The rest of this paper is organized as follows. {Our linearization and nondimensionalization of the Boltzmann equation and moment equations are introduced in Section \ref{sec:lin}.} Section \ref{sec:basic_functions} introduces the variables and then shows our main results including the explicit expressions of linear R13 equations with boundary conditions for general elastic collision models. The derivation of R13 equations is given in section \ref{sec:abstract_derivation_of_R13}, where we also demonstrate the super-Burnett order of the model. In section \ref{sec:boundary}, we formulate the boundary conditions, analyze the boundary layers, and remove the unwanted layers by fixing the boundary conditions. One-dimensional examples are shown in section \ref{sec:1Dexample} to validate our models. We conclude our paper with a brief summary in section \ref{sec:conclusion}.

{
\section{Linearization and nondimensionalization} \label{sec:lin}
All the derivations in this work will be established on dimensionless and linearized equations. In this section, we will state our approaches to both linearization and nondimensionalization. Note that each dimensionless variable in this section will be labeled by adding a hat ``$\,\hat{\cdot}\,$'' on top of the symbol denoting the variable, while such accents will be removed for convenience in other sections.

\subsection{Linearization and nondimensionalization of the Boltzmann equation}
In the original Boltzmann equation, the distribution function $f(\bx, \bxi, t)$ satisfies the following integro-differential equation:
\begin{equation} \label{eq:nonlinear_Boltzmann}
\frac{\partial f}{\partial t} + \xi_k \frac{\partial f}{\partial x_k} = Q[f,f] := \int_{\mathbb{R}^3} \int_{\mathbb{S}^2} B(\bxi - \bxi_1, \boldsymbol{\omega}) [f(\bxi_1') f(\bxi') - f(\bxi_1) f(\bxi)] \,\mathrm{d}\boldsymbol{\omega} \,\mathrm{d}\bxi_1,
\end{equation}
where $B(\cdot, \cdot)$ stands for the collision kernel, and 
\begin{displaymath}
\bxi' = \frac{\bxi + \bxi_1}{2} + \frac{|\bxi - \bxi_1|}{2} \boldsymbol{\omega}, \qquad
\bxi_1' = \frac{\bxi + \bxi_1}{2} - \frac{|\bxi - \bxi_1|}{2} \boldsymbol{\omega}.
\end{displaymath}
In this work, we are interested in the linear regime where the distribution function is close to a global Maxwellian with density $\rho_0$, velocity $\vb_0$, temperature $\theta_0$ and mass of a single molecule $m$:
\begin{displaymath}
\mathcal{M}_0(\bxi) = \frac{\rho_0}{m(2\pi \theta_0)^{3/2}} \exp \left( -\frac{|\bxi - \vb_0|^2}{2\theta_0} \right).
\end{displaymath}
We can then introduce a small quantity $\epsilon$ and write the distribution function $f$ as
\begin{equation} \label{eq:perturbation}
f(\bx,\bxi,t) = \mathcal{M}_0(\bxi) [1 + \epsilon \hat{f}(\hat{\bx},\hat{\bxi},\hat{t})],
\end{equation}
where $\hat{\bx}$, $\hat{\bxi}$ and $\hat{t}$ are dimensionless variables defined by
\begin{equation}\label{dimless_x_t}
\hat{\bx} = (\bx - \vb_0 t) / L, \qquad \hat{\bxi} = (\bxi - \vb_0) / \sqrt{\theta_0}, \qquad \hat{t} = t / (L / \sqrt{\theta_0}).
\end{equation}
Plugging \eqref{eq:perturbation} into the Boltzmann equation yields
\begin{equation} \label{eq:hat_f}
\frac{\partial \hat{f}}{\partial \hat{t}} + \hat{\xi}_k \frac{\partial \hat{f}}{\partial \hat{x}_k} = \frac{\rho_0 L}{m \sqrt{\theta_0}} \frac{\mathcal{Q}[\hat{\mathcal{M}}_0,\hat{f} \hat{\mathcal{M}}_0] + \mathcal{Q}[\hat{f}\hat{\mathcal{M}}_0, \hat{\mathcal{M}}_0] + \epsilon \mathcal{Q}[\hat{f}\hat{\mathcal{M}}_0, \hat{f} \hat{\mathcal{M}}_0]}{\hat{\mathcal{M}}_0},
\end{equation}
where $\hat{\mathcal{M}}_0$ is the dimensionless Maxwellian
\begin{displaymath}
\hat{\mathcal{M}}_0(\hat{\bxi}) = \frac{1}{(2\pi)^{3/2}} \exp \left( -\frac{|\hat{\bxi}|^2}{2} \right).
\end{displaymath}
The linearized equation can then be obtained by discarding the $O(\epsilon)$ term in \eqref{eq:hat_f} and defining
\begin{displaymath}
\hat{\mathcal{L}}[\hat{f}] := \frac{\mathcal{Q}[\hat{\mathcal{M}}_0,\hat{f} \hat{\mathcal{M}}_0] + \mathcal{Q}[\hat{f}\hat{\mathcal{M}}_0, \hat{\mathcal{M}}_0]}{B_0 \hat{\mathcal{M}}_0}, \qquad \kn = \frac{m B_0 \sqrt{\theta_0}}{\rho_0 L}.
\end{displaymath}
Here $B_0$ is a constant with the same dimension as the collision kernel $B(\cdot, \cdot)$ to guarantee that $\hat{\mathcal{L}}$ is dimensionless, and here we choose $B_0$ such that 
\begin{displaymath}
\langle \hat{\xi}_1 \hat{\xi}_2, \hat{\mathcal{L}}(\hat{\xi}_1 \hat{\xi}_2) \rangle = -1,
\end{displaymath}
where the inner product is defined by
\begin{displaymath}
\langle \hat{f}, \hat{g} \rangle = \int_{\mathbb{R}^3} \hat{f}(\hat{\bxi}) \hat{g}(\hat{\bxi}) \hat{\mathcal{M}}_0(\hat{\bxi}) \,\mathrm{d} \hat{\bxi}.
\end{displaymath}

Note that the linearized Boltzmann equation satisfies the linearized H-theorem. In the original Boltzmann equation \eqref{eq:nonlinear_Boltzmann}, the entropy density and the entropy flux are defined by
\begin{displaymath}
\eta = -k_B \int_{\mathbb{R}^3} f \log f \,\mathrm{d}\bxi, \qquad \psi_k = -k_B \int_{\mathbb{R}^3} \xi_k f \log f \,\mathrm{d}\bxi,
\end{displaymath}
which satisfy the inequality
\begin{displaymath}
\frac{\partial \eta}{\partial t} + \frac{\partial \psi_k}{\partial x_k} \geqslant 0,
\end{displaymath}
indicating the second law of thermodynamics. For the linearized Boltzmann equation, the corresponding definitions are
\begin{displaymath}
\hat{\eta} = -\int_{\mathbb{R}^3}  |\hat{f}|^2 \,\mathrm{d}\hat{\bxi}, \qquad \hat{\psi}_k = -\int_{\mathbb{R}^3} \xi_k |\hat{f}|^2 \,\mathrm{d}\hat{\bxi}.
\end{displaymath}
Since the linearized collision operator $\hat{\mathcal{L}}$ satisfies $\langle \hat{f}, \hat{\mathcal{L}}[\hat{f}] \rangle \leqslant 0$ for any $\hat{f}$, the H-theorem turns out to be the $L^2$ stability of the linearized Boltzmann equation:
\begin{displaymath}
\frac{\partial \hat{\eta}}{\partial \hat{t}} + \frac{\partial \hat{\psi}_k}{\partial \hat{x}_k} \geqslant 0,
\end{displaymath}
Later in our derivation of moment equations, this property will be preserved.

\subsection{Linearization and nondimensionalization of the conservation laws}
The laws of mass, momentum and energy conservation can be derived by taking moments of the Boltzmann equation, which yields
\begin{align*}
& \frac{\partial \rho}{\partial t} + \frac{\partial (\rho v_j)}{\partial x_j} = 0, \\
& \frac{\partial (\rho v_i)}{\partial t} + \frac{\partial}{\partial x_j} (\rho v_i v_j + \rho \theta \delta_{ij} + \sigma_{ij}) = 0, \\
& \frac{\partial}{\partial t} \left(\frac{1}{2} \rho v_i v_i + \frac{3}{2} \rho \theta \right) + \frac{\partial}{\partial x_j} \left( \frac{5}{2} \rho \theta  v_j + \frac{1}{2} \rho v_i v_i v_j + \sigma_{ij} v_i + q_j \right) = 0.
\end{align*}
Here $\sigma_{ij}$ denotes the stress tensor and $q_j$ denotes the heat flux. To linearize these equations, we again assume that the fluid state is close to the equilibrium specified by density $\rho_0$, velocity $v_{0,i}$ and temperature $\theta_0$, and introduce dimensionless variables $\hat{\rho}$, $\hat{\theta}$, $\hat{v}_i$, $\hat{\sigma}_{ij}$ and $\hat{q}_{i}$ by  
\begin{displaymath}
    \rho = \rho_0(1+\epsilon \hat{\rho}) , \quad   \theta = \theta_0(1+\epsilon\hat{\theta}), \quad v_i = v_{0,i} + \epsilon\sqrt{\theta_0} \hat{v}_i, \quad \sigma_{ij} = \epsilon\rho_0 \theta_0 \hat{\sigma}_{ij}, \quad q_i = \epsilon\rho_0 \theta_0^{3/2}\hat{q}_i.
\end{displaymath}
By substituting the above expressions together with \eqref{dimless_x_t} into the conservation laws, and then dropping all the higher-order terms, we reach the linearized and dimensionless conservation laws:
\begin{equation} \label{eq:conservation_laws}
\begin{aligned}
     &\frac{\partial \hat{\rho}}{\partial \hat{t}}+  \frac{\partial \hat{v}_j}{\partial \hat{x}_j}  = 0,  \\ 
     & \frac{\partial \hat{v}_i}{\partial \hat{t}}+ \frac{\partial \hat{\rho}}{\partial \hat{x}_i} +\frac{\partial \hat{\theta}}{\partial \hat{x}_i}+ \frac{\partial \hat{\sigma}_{ij}}{\partial \hat{x}_j}  = 0, \\ 
  & \frac{\partial \hat{\theta}}{\partial \hat{t}}+ \frac{2}{3} \frac{\partial \hat{v}_j}{\partial \hat{x}_j}  +\frac{2}{3}\frac{\partial \hat{q}_j}{\partial \hat{x}_j} = 0.
\end{aligned}
\end{equation}

Another approach to derive the same equations is to directly take moments of the linearized Boltzmann equation. The relationship between the dimensionless moments ($\hat{\rho}$, $\hat{\theta}$, etc.) and the dimensionless distribution function $\hat{f}$ is
\begin{gather*}
\hat{\rho} = \langle 1, \hat{f} \rangle, \quad \hat{v}_i = \langle \hat{\xi}_i, \hat{f} \rangle, \quad \hat{\theta} = \left\langle \frac{1}{3} |\hat{\bxi}|^2 - 1, \hat{f} \right\rangle, \\
\hat{\sigma}_{ij} = \left\langle \hat{\xi}_i \hat{\xi}_j - \frac{1}{3} |\hat{\bxi}|^2 \delta_{ij}, \hat{f} \right\rangle, \qquad \hat{q}_i = \left\langle \frac{1}{3} |\hat{\bxi}|^2 \hat{\xi}_i, \hat{f} \right\rangle.
\end{gather*}
Thus, using the fact that $\langle 1, \hat{\mathcal{L}} [\hat{f}] \rangle = \langle \hat{\xi}_i, \hat{\mathcal{L}} [\hat{f}] \rangle = \langle |\hat{\bxi}|^2, \hat{\mathcal{L}} [\hat{f}] \rangle = 0$, one can take the inner product of the linearized equation \eqref{eq:Boltzmann} and the polynomials $1$, $\hat{\xi}_i$ and $|\hat{\bxi}|^2$ to obtain the moment equations \eqref{eq:conservation_laws}.

The two methods to formulate the conservation laws are essentially equivalent, since taking moments and linearizing equations are two commutable operations. In our derivation of moment equations to be presented below, we will follow the second approach which takes moments of the linearized equations \eqref{eq:Boltzmann}. Due to the presence of $\hat{\sigma}_{ij}$ and $\hat{q}_i$ in \eqref{eq:conservation_laws}, our purpose is to find a suitable closure that provides decent approximation to the linearized Boltzmann equation when the Knudsen number $\kn$ is small. Hereafter, the accent ``\,$\hat{\cdot}$\,'' will no longer be added to dimensionless variables.
}

\section{Linear R13 equations}
\label{sec:basic_functions}
Here we will present the final form of the R13 equations with wall boundary conditions, which is the main conclusion of this paper. Since our selection of the 13 moments is slightly different from classical literature, we will introduce our choice of variables before list out the equations.

\subsection{Choice of variables}
The 13-moment equations were first proposed by \cite{Grad13_1949}, where the distribution function was expanded into an infinite series using Hermite polynomials. The coefficients of the polynomials were regarded as moments, and Grad chose to include the first 13 moments, namely the density ($\rho$), momentum ($\rho v_i$), pressure ($\rho \theta$), stress tensor ($\sigma_{ij}$) and heat flux ($q_i$), into his equations. In later developments of 13-moment systems, such a choice of the 13 moments becomes a standard and is followed by most works \cite{StruchtrupR13_2003,Myong1999nccr,yong2015cdf}. In particular, for the regularized 13-moment equations for Maxwell molecules, such a choice of moments matches exactly with the analysis using the order of magnitude method \cite[Chapter 8]{Struchtrup2005macroscopic}. It is seen that for linear equations, these moments, denoted by $\langle \boldsymbol{\phi}_{13}, f\rangle$ with $\boldsymbol{\phi}_{13}$ being a 13-dimensional vector of polynomials of $\boldsymbol{\xi}$, fully describes the distribution function $f$ up to order $O(\kn)$, which means for any polynomial $r(\boldsymbol{\xi})$ satisfying $\langle \boldsymbol{\phi}_{13}, r \rangle = 0$, the moment $\langle r, f\rangle$ has at least order $O(\kn^2)$ when performing asymptotic analysis. This property leads to the super-Burnett order of R13 equations for Maxwell molecules, meaning that the super-Burnett equations can be derived from the R13 equations by Chapman-Enskog expansion. However, it is also known that this no longer holds for other types of molecules \cite{Struchtrup20052ndorder}. Inspired by this fact, in our study of 13-moment equations for general gas molecules, we will adopt a different set of variables $\langle \tilde{\boldsymbol{\phi}}_{13}, f\rangle$, such that we can again obtain $\langle r, f\rangle \sim o(\kn)$ once $\langle \tilde{\boldsymbol{\phi}}_{13}, r\rangle = 0$. This will help achieve the super-Burnett order for the moment equations.

The details on the construction of $\tilde{\boldsymbol{\phi}}_{13}$ will be discussed in Section \ref{sec:abstract_derivation_of_R13}. Here we only provide some brief information necessary for the formulation of the moment equations. Our 13 moments also include conservative quantities such as the density ($\rho$), velocity ($v_i$) and temperature ($\theta$). Note that the velocity and the temperature are conservative only under linear settings. For the remaining 8 moments, they also include a trace-free 2-tensor and a vector like in the classical case. For consistency, we will denote the 2-tensor by $\bar{\sigma}_{ij}$ and the vector by $\bar{q}_i$. The relationship between these variables and the stress tensor ($\sigma_{ij}$) and the heat flux ($q_i$) will be given later in this section. The conservative variables $\rho$, $v_i$ and $\theta$ are $O(\kn^0)$ moments, while the higher-order moments $\bar{\sigma}_{ij}$ and $\bar{q}_i$ are $O(\kn)$ moments. Note that the conservative moments are always defined using the distribution function $f$ by
\begin{displaymath}
  (\rho, v_i, \theta) = \left\langle \left(1, \xi_i, \frac{1}{3} \xi_k \xi_k \right), f \right\rangle,
\end{displaymath}
where as the definitions of $\bar{\sigma}_{ij}$ and $\bar{q}_i$ depend on the collision model. When Maxwell molecules are considered, the definitions of $\bar{\sigma}_{ij}$ and $\bar{q}_i$ reduce to those of $\sigma_{ij}$ and $q_i$.

\subsection{Moment equations and boundary conditions}
In this section, we will list the equations and boundary conditions derived for the aforementioned variables. Like the generalized 13-moment equations derived in \cite{Struchtrup20052ndorder}, our equations will contain coefficients that depend on the type of gas molecules. In the equations below, these constants will be denoted by $k_i$ and $l_i$. The equations are
\begin{align}
    \label{newr13:eqa}
      &\frac{\partial \rho}{\partial t}+  \nabla \cdot \vb  = 0,  \\ 
    \label{newr13:eqb}
     &\frac{\partial \theta}{\partial t}+ \frac{2}{3} \nabla \cdot \vb  +\frac{2}{3}k_0\nabla \cdot \qbarb-k_1 \kn \Delta  \theta+k_2\kn \nabla \cdot (\nabla \cdot  \sigmabarb ) = 0 ,\\ 
    \label{newr13:eqc}
     &\frac{\partial \vb}{\partial t}+ \nabla \rho +\nabla \theta -k_3 \kn \nabla \cdot (\nabla \vb)_\stf  -k_4 \kn \nabla \cdot (\nabla \qbarb)_\stf+ k_5 \nabla\cdot\sigmabarb  = 0, \\ 
    &\frac{\partial \qbarb}{\partial t}+ \frac{5}{2} k_0 \nabla \theta - \frac{5}{2}k_4\kn\nabla \cdot (\nabla \vb)_\stf  -2 k_6 \kn \nabla(\nabla \cdot \qbarb) \notag \\ 
    & \quad {} -\frac{12}{5}k_7\kn\nabla \cdot (\nabla \qbarb)_\stf+k_8 \nabla \cdot \sigmabarb  = - \frac{1}{\kn} \frac{2}{3} l_1 \qbarb, \label{newr13:eqd1} \\
    &\frac{\partial\sigmabarb}{\partial t} + 3k_2\kn \left(\nabla^2 \theta \right)_\stf+ 2k_5 (\nabla \vb)_\stf +\frac{4}{5}k_8 (\nabla \qbarb)_\stf \notag \\
    & \quad {}- 2k_9\kn \nabla\cdot (\nabla\sigmabarb)_\stf -k_{10}\kn \left(\nabla(\nabla\cdot \sigmabarb) \right)_\stf= - \frac{1}{\kn} l_2 \sigmabarb. \label{newr13:eqe1}
\end{align}
Here $(\cdot)_{\stf}$ refers to the symmetric and trace-free part of a given tensor, which is defined entrywisely as $(\cdot)_{ij} \mapsto ((\cdot)_\stf)_{ij} = (\cdot)_{\langle ij\rangle}$ for a matrix and $(\cdot)_{ijk} \mapsto ((\cdot)_\stf)_{ijk} = (\cdot)_{\langle ijk\rangle}$ for a 3-tensor. The detailed definition can be found in \cite{Struchtrup2005macroscopic}. The coefficients $k_i$ and $l_i$ have been computed for gas molecules whose intermolecular potential satisfies inverse power laws. Values for these coefficients for some power indices are given in the Appendix \ref{app:coefficient}. Although these coefficients look arbitrary, our derivation in section \ref{sec:abstract_derivation_of_R13} will reveal that they are fully determined by the linearized and nondimensionalized Boltzmann collision operator, which depends only on the potential energy between two gas molecules. In inverse-power-law models, only two parameters exist: the intensity coefficient of the potential and the power $\eta$. After nondimensionalization, the intensity coefficient is integrated into the Knudsen number, and thus $\eta$ is the only parameter that dictates all the coefficients.

The first three equations \eqref{newr13:eqa}\eqref{newr13:eqb}\eqref{newr13:eqc} are conservation laws of mass, energy and momentum. By comparing \eqref{newr13:eqb} and \eqref{newr13:eqc} with the conservation laws expressed by the stress tensor and the heat flux {(see \eqref{eq:conservation_laws})}:
\begin{equation*}
     \frac{\partial \theta}{\partial t}+ \frac{2}{3} \nabla\cdot \vb  +\frac{2}{3}\nabla\cdot \qb= 0, \qquad
     \frac{\partial \vb}{\partial t}+\nabla \rho +\nabla \theta+ \nabla \cdot \sigmab  = 0,
\end{equation*}
one can observe the relationship between $\sigmabarb$, $\qbarb$ and $\sigmab$, $\qb$:
\begin{equation}\label{eq:qbartoq}
\sigmab = k_5\sigmabarb-k_4 \kn (\nabla \qbarb)_{\stf} -k_3 \kn(\nabla \vb)_{\stf}, \quad \qb = k_0\qbarb-\frac{3}{2}k_1\kn \nabla\theta+\frac{3}{2}k_2\kn\nabla\cdot \sigmabarb.
\end{equation}
Such a relationship allows us to calculate the stress tensor and heat flux once the equations \eqref{newr13:eqa}--\eqref{newr13:eqe1} are solved. We remark that the moment equations \eqref{newr13:eqa}--\eqref{newr13:eqe1} we derived is Galilean invariant. The detailed proof can be found in Appendix \ref{app:galilean}.

Another important ingredient of the moment equations is the wall boundary conditions, since the linear moment equations are often applied in channel flows, where the boundaries play crucial roles in rarefaction effects. The boundary conditions we present here are derived from the Maxwell boundary conditions for the Boltzmann equation, where the reflected gas flow is a linear combination of specular and diffusive reflections. The accommodation coefficient, denoted by $\chi$, is the parameter describing the proportion of diffusive reflection. For simplicity, we are going to use $\tilde{\chi} = 2\chi / (2-\chi)$ in our formulation of boundary conditions. 

We now focus on a specific point on the wall and let $\boldsymbol{n}$ be the outer unit normal vector at this point. Meanwhile, we let $\boldsymbol{\tau}_1$ and $\boldsymbol{\tau}_2$ be the two tangential unit vectors that are perpendicular to each other. When describing boundary conditions, we temporarily rotate our coordinate system such that the three axes become $\boldsymbol{n}$, $\boldsymbol{\tau}_1$ and $\boldsymbol{\tau}_2$. Correspondingly, the three components of the velocity will be denoted by $v_n$, $v_{\tau_1}$ and $v_{\tau_2}$ (similar for $\bar{q}$), and the components of $\bar{\sigma}$ are written by $\bar{\sigma}_{nn}, \bar{\sigma}_{n \tau_1}, \bar{\sigma}_{n \tau_2}, \bar{\sigma}_{\tau_1 \tau_1}, \bar{\sigma}_{\tau_1 \tau_2}, \bar{\sigma}_{\tau_2 \tau_2}$. Moreover, we assume that the temperature and the velocity of the wall are $\theta^W$ and $v_i^W$ at this point. Under these assumptions, the wall boundary conditions have the following form:
\begin{align}
    \label{eq:bcv} & v_n = 0, \\
    & \bar{q}_{n}  = \tilde{\chi} \left[ m_{11} (\theta -\theta^{W}) +m_{12} \bar{\sigma}_{nn} -  m_{13} \kn \frac{\partial \bar{q}_j}{\partial x_j}  -  m_{14} \kn \frac{\partial \bar{q}_{\langle n}}{\partial x_{n \rangle}}  - m_{15} \kn \frac{\partial v_{\langle n}}{\partial x_{n \rangle}} \right], \\ 
     & m_{26}\bar{q}_{n} + m_{27} \kn \frac{\partial \theta}{\partial x_n} - m_{28} \kn \frac{\partial \bar{\sigma}_{nj}}{\partial x_j}   \notag\\
     & \quad =\tilde{\chi} \left[-m_{21} (\theta -\theta^{W}) + m_{22} \bar{\sigma}_{nn} +  m_{23} \kn \frac{\partial \bar{q}_j}{\partial x_j} +  m_{24} \kn \frac{\partial \bar{q}_{\langle n}}{\partial x_{n \rangle}}  + m_{25} \kn \frac{\partial v_{\langle n}}{\partial x_{n \rangle}} \right],  \\ 
      & \bar{\sigma}_{\tau_i n} = \tilde{\chi} \left[m_{31} (v_{\tau_i} -v^{W}_{\tau_i}) +m_{32}\bar{q}_{\tau_i} - m_{33} \kn \frac{\partial \bar{\sigma}_{\tau_i j}}{\partial x_j}  - m_{34} \kn  \frac{\partial \bar{\sigma}_{\langle \tau_i n}}{\partial x_{n \rangle}}  + m_{35} \kn \frac{\partial \theta}{\partial x_{\tau_i}} \right], i=1,2,\\ 
        &  m_{46}\bar{\sigma}_{\tau_in} + m_{47}\kn\frac{\partial v_{\langle \tau_i}}{\partial x_{n\rangle}} + m_{48}\kn\frac{\partial \bar{q}_{\langle \tau_i}}{\partial x_{n\rangle}}  \notag\\
        & \quad = - \tilde{\chi} \left[-m_{41} (v_{\tau_i} -v^{W}_{\tau_i}) +m_{42}\bar{q}_{\tau_i} + m_{43} \kn \frac{\partial \bar{\sigma}_{\tau_i j}}{\partial x_j}  + m_{44} \kn  \frac{\partial \bar{\sigma}_{\langle \tau_i n}}{\partial x_{n \rangle}}  - m_{45} \kn \frac{\partial \theta}{\partial x_{\tau_i}} \right], i=1,2, \\ 
          & m_{56}\bar{\sigma}_{\tau_in} + m_{57}\kn\frac{\partial v_{\langle \tau_i}}{\partial x_{n\rangle}} + m_{58}\kn\frac{\partial \bar{q}_{\langle \tau_i}}{\partial x_{n\rangle}}    \notag \\
          &\quad = - \tilde{\chi} \left[-m_{51} (v_{\tau_i} -v^{W}_{\tau_i}) +m_{52}\bar{q}_{\tau_i} + m_{53} \kn  \frac{\partial \bar{\sigma}_{\tau_i j}}{\partial x_j}  + m_{54} \kn  \frac{\partial \bar{\sigma}_{\langle \tau_i n}}{\partial x_{n \rangle}}  - m_{55} \kn \frac{\partial \theta}{\partial x_{\tau_i}} \right], i=1,2,\\ 
  & m_{66} \bar{q}_n + \kn\left(m_{67} \frac{\partial \bar{\sigma}_{\langle nn }}{\partial x_{n \rangle}} 
 +m_{68}\frac{\partial \bar{\sigma}_{nj }}{\partial x_{j}} + m_{69}\frac{\partial \theta}{\partial x_n} \right) \notag\\
  & \quad = - \tilde{\chi} \left[-m_{61} (\theta -\theta^{W}) +m_{62} \bar{\sigma}_{nn} + m_{63} \kn \frac{\partial \bar{q}_j}{\partial x_j} +  m_{64} \kn \frac{\partial \bar{q}_{\langle n}}{\partial x_{n \rangle}} + m_{65} \kn \frac{\partial v_{\langle n}}{\partial x_{n \rangle}}\right],\\ 
 & \kn\left(\frac{\partial \bar{\sigma}_{\langle \tau_i \tau_i}}{\partial x_{n \rangle}} + \frac{1}{2} \frac{\partial \bar{\sigma}_{\langle n n}}{\partial x_{n \rangle}} \right) =  -\tilde{\chi} m_{71} \left( \bar{\sigma}_{\tau_i \tau_i}  + \frac{1}{2} \bar{\sigma}_{nn}  \right), i=1,2, \\
\label{eq:bcm}  & \kn \frac{\partial \bar{\sigma}_{\langle \tau_1 \tau_2}}{\partial x_{n\rangle}}  = -\tilde{\chi} m_{81} \bar{\sigma}_{\tau_1 \tau_2}.
\end{align}
In these equations, $m_{jk}$ are constants depending on the molecular interactions. As examples, we will consider inverse-power-law models in which the repulsive force between two gas molecules is proportional to the $\eta$th power of the distance between them. For such models, the values of the constants $k_i$ and $m_{ij}$ are tabulated in the Appendix \ref{app:coefficient} for some power indices.

It will become clear by its derivation that this set of boundary conditions satisfies the general form of $L^2$-stable boundary conditions \eqref{eq:r13bc}, which implies the second law of thermodynamics. The external contribution $\boldsymbol{g}_{\mathrm{ext}}$ is given by the terms with wall velocity $v_i^W$ and wall temperature $\theta^W$.

\subsection{Second law of thermodynamics}
The derivation of the new R13 equations \eqref{newr13:eqa}---\eqref{newr13:eqe1}, to be detailed in the next section, maintains the structure required by \eqref{eq:moment}. Therefore, according to \eqref{eq:L2stability}. The second law of thermodynamics is automatically satisfied. In fact, the entropy inequality can also be observed from \eqref{newr13:eqa}---\eqref{newr13:eqe1} directly by straightforward calculations. Assume that the spatial domain is $\mathbb{R}^3$ and all quantities decay to zero when $\bx$ tends infinity. Then, by calculating
\begin{displaymath}
\int_{\mathbb{R}^3} \left[\rho \cdot \eqref{newr13:eqa} + \frac{3}{2} \theta \cdot \eqref{newr13:eqb} + \vb \cdot \eqref{newr13:eqc} + \frac{2}{5} \qbarb \cdot \eqref{newr13:eqd1} + \frac{1}{2} \sigmabarb : \eqref{newr13:eqe1} \right] \mathrm{d}\boldsymbol{x},
\end{displaymath}
one obtains
\begin{equation}
\begin{split}
& \frac{\mathrm{d}}{\mathrm{d}t} \int_{\mathbb{R}^3} \frac{1}{2} \left(\rho^2 + \frac{3}{2} \theta^2 + |\vb|^2 + \frac{2}{5} |\qbarb|^2 + \frac{1}{2} \|\sigmabarb\|^2 \right) \mathrm{d}\bx \\
= & -\kn\int_{\mathbb{R}^3} \left( \frac{3}{2} k_1 |\nabla \theta|^2 + k_3 \|(\nabla \vb)_{\stf}\|^2 + \frac{4}{5} k_6 \|\nabla \qbarb\|^2 + \frac{24}{25} k_7 \|(\nabla \qbarb)_{\stf}\|^2 \right) \mathrm{d}\bx \\
& -\kn\int_{\mathbb{R}^3} \left( k_9 \vertiii{(\nabla \sigmabarb)_{\stf}}^2 + \frac{1}{2} k_{10} |\nabla \cdot \sigmabarb|^2\right) \mathrm{d}\bx - \frac{1}{\kn}\int_{\mathbb{R}^3} \left( \frac{4}{15} l_1 |\qbarb|^2 + \frac{1}{2} l_2 \|\sigmabarb\|^2 \right) \mathrm{d}\bx \\
& + 3\kn k_2\int_{\mathbb{R}^3} \nabla \theta \cdot \left( \nabla \cdot \sigmabarb \right) \mathrm{d}\bx - 2 \kn k_4 \int_{\mathbb{R}^3} \left( \nabla \vb \right)_{\stf} : \left( \nabla \qbarb \right)_{\stf} \mathrm{d}\bx,
\end{split}
\end{equation}
where we have used $|\cdot|$, $\|\cdot\|$ and $\vertiii{\cdot}$ to denote the Frobenius norms of vectors, 2-tensors and 3-tensors, respectively.
We will show later (at the end of section \ref{sec:R13_derivation}) that
\begin{equation}
\label{eq:k_ineq}
\frac{(3k_2)^2}{4} \leqslant \frac{3}{2}k_1 \cdot \frac{1}{2} k_{10}, \qquad k_4^2 \leqslant k_3 \cdot \frac{24}{25} k_7,
\end{equation}
which yields
\begin{gather*}
    \left|3k_2\int_{\mathbb{R}^3}\nabla\theta\cdot(\nabla \cdot \sigmabarb) \,\mathrm{d}\bx\right| \leqslant \int_{\mathbb{R}^3} \left( \frac{3}{2} k_1 |\nabla\theta|^2 + \frac{1}{2} k_{10} |\nabla \cdot \sigmabarb|^2\right)\mathrm{d}\bx, \\
    \left|2k_4\int_{\mathbb{R}^3}(\nabla \vb)_{\stf} : (\nabla \qbarb)_{\stf} \,\mathrm{d}\bx\right| \leqslant \int_{\mathbb{R}^3} \left( k_3 \|(\nabla\vb)_{\stf}\|^2 + \frac{24}{25} k_7 \|(\nabla \qbarb)_{\stf} \|^2\right)\mathrm{d}\bx.
\end{gather*}
Thus,
\begin{equation} \label{eq:H-theorem}
\begin{split}
& \frac{\mathrm{d}}{\mathrm{d}t} \int_{\mathbb{R}^3} \frac{1}{2} \left(\rho^2 + \frac{3}{2} \theta^2 + |\vb|^2 + \frac{2}{5} |\qbarb|^2 + \frac{1}{2} \|\sigmabarb\|^2 \right) \mathrm{d}\bx \\
\leqslant {} & {-\kn} \int_{\mathbb{R}^3}  \left( \frac{4}{5} k_6 \|\nabla \qbarb\|^2 +
 k_9 \vertiii{(\nabla \sigmabarb)_{\stf}}^2 \right) \mathrm{d}\bx
- \frac{1}{\kn}\int_{\mathbb{R}^3} \left( \frac{4}{15} l_1 |\qbarb|^2 + \frac{1}{2} l_2 \|\sigmabarb\|^2 \right) \mathrm{d}\bx
,
\end{split}
\end{equation}
Since the coefficients $k_6$, $k_9$, $l_1$ and $l_2$ are all nonnegative, the right-hand side of the equation above is nonpositive. The entropy density can thus be defined by
\begin{displaymath}
H = H_0 - \frac{1}{2} \left( \rho^2 + \frac{3}{2} \theta^2 + |\vb|^2 + \frac{2}{5} |\qbarb|^2 + \frac{1}{2} \|\sigmabarb\|^2 \right),
\end{displaymath}
where $H_0$ is an arbitrary constant. In the case of Maxwell molecules, this results echos the conclusion in \cite{Struchtrup2007}.

For bounded domains, with boundary conditions \eqref{eq:bcv}---\eqref{eq:bcm}, we can also show that the entropy may increase only due to the incoming fluxes from the boundary. The derivation is similar to the unbounded case, while boundary terms appear when performing integration by parts, for which the boundary conditions need to be applied to make further simplifications. For any bounded domain $\Omega$, the conclusion has the following form:
\begin{equation} \label{eq:H-theorem_b}
\begin{split}
-\frac{\mathrm{d}}{\mathrm{d}t} \int_{\Omega} H \,\mathrm{d}\bx \leqslant {} & \int_{\partial \Omega} \theta^W (C_1\theta + C_2\bar{q}_n + C_3\bar{\sigma}_{nn}) \,\mathrm{d}s \\
+ \int_{\partial \Omega} [v_{\tau_1}^W & (C_4 v_{\tau_1} + C_5 \bar{q}_{\tau_1} + C_6 \bar{\sigma}_{n\tau_1}) + v_{\tau_2}^W (C_4 v_{\tau_2} + C_5 \bar{q}_{\tau_2} + C_6 \bar{\sigma}_{n\tau_2})] \,\mathrm{d}s,
\end{split}
\end{equation}
where the constants $C_1$ to $C_6$ depend only on the coefficients in the equations and the boundary conditions.
In \eqref{eq:H-theorem_b}, the right-hand side only contains terms related to $\vb^W$ and $\theta^W$, meaning that the entropy can increase only due to the velocities and temperatures of the walls, which does not violate the second law of thermodynamics.

\section{Derivation of R13 equations}
\label{sec:abstract_derivation_of_R13}
In this section, we will detail our derivation of the time-dependent R13 equations. To begin with, we will first explain how the 13 variables used in our equations are chosen based on the collision model.

\subsection{Choice of variables}
As described in the previous section, our choice of the 13 variables is different from Grad's 13-moment equations. To explain how the new variables are defined, we start from the following definition of trace-free moments, which has been utilized in the derivation of steady-state R13 equations \cite{yang2024siap}:
\begin{equation}
\label{eq:def_w}
w_{i_1 \cdots i_l}^n = \langle \psi_{i_1 \cdots i_l}^n, f \rangle,
\end{equation}
where $\psi_{i_1 \cdots i_l}^n$ is a polynomial defined by
\begin{displaymath}
\psi_{i_1 \cdots i_l}^n(\boldsymbol{\xi}) = \bar{L}_n^{(l+1/2)} \left( \frac{|\boldsymbol{\xi}|^2}{2} \right) \xi_{\langle i_1} \cdots \xi_{i_l \rangle},
\end{displaymath}
and $\bar{L}_n^{(l+1/2)}$ is the normalized Laguerre polynomial:
\begin{displaymath}
\bar{L}_n^{(l+1/2)}(x) = \sqrt{\frac{\sqrt{\pi}}{2^{l+1} n! \Gamma (n+l+3/2)}} x^{-(l+1/2)} \left( \frac{\mathrm{d}}{\mathrm{d}x} - 1 \right)^n x^{n+l+1/2}.
\end{displaymath}
These moments can be viewed as generalizations of Grad's 13 moments, and they are related to Grad's moments by
\begin{displaymath}
\rho = w^0, \quad v_i = \sqrt{3} w_i^0, \quad \theta = -\sqrt{\frac{2}{3}} w^1, \quad \sigma_{ij} = \sqrt{15} w_{ij}^0, \quad q_i = -\sqrt{\frac{15}{2}} w_i^1.
\end{displaymath}
By taking moments of the Boltzmann equation \eqref{eq:Boltzmann}, one can derive evolution equations for the moments $w_{i_1 \cdots i_l}^n$:
\begin{equation} \label{eq:mnt_eq}
\begin{split}
& \frac{\partial w_{i_1\cdots i_l}^n }{\partial t} + \left( \sqrt{2(n+l)+3} \frac{\partial w_{i_1\cdots i_l j}^n}{\partial x_j} - \sqrt{2n} \frac{\partial w_{i_1\cdots i_l j}^{n-1}}{\partial x_j} \right) + \\
& \qquad \frac{l}{2l+1} \left( \sqrt{2(n+l)+1} \frac{\partial w_{\langle i_1\cdots i_{l-1}}^n}{\partial x_{i_l \rangle}} - \sqrt{2(n+1)} \frac{\partial w_{\langle i_1\cdots i_{l-1}}^{n+1}}{\partial x_{i_l\rangle}} \right)=
   \frac{1}{\kn} \sum_{n'=0}^{+\infty} a_{lnn'} w_{i_1\cdots i_l}^{n'}.
\end{split}
\end{equation}
Here the coefficients $a_{lnn'}$ on the right-hand side are given by
\begin{displaymath}
a_{lnn'} = \frac{(2l+1)!!}{l!} \langle \psi_{i_1 \cdots i_l}^n, \mathcal{L} \psi_{i_1 \cdots i_l}^{n'} \rangle,
\end{displaymath}
where we do not take summation over the indices $i_1, \ldots, i_l$, and the choice of these indices does not affect the value of $a_{lnn'}$ due to the rotational invariance of the linearized collision operator. By the self-adjointness of $\mathcal{L}$, one can observe that $a_{lnn'} = a_{ln'n}$. Due to the conservation of mass, momentum and energy, it holds that
\begin{equation} \label{eq:a_zero}
a_{00n} = a_{0n0} = a_{01n} = a_{0n1} = a_{10n} = a_{1n0} = 0.
\end{equation}
For inverse-power-law models, the computation of these coefficients are provided in \cite{Cai2015}.

To select appropriate moments in our equations, we apply the Chapman-Enskog expansion to the moment equations, which requires assuming that $\kn$ is a small parameter and applying the following asymptotic expansion:
\begin{equation} \label{eq:w_asymp}
w_{i_1\cdots i_l}^n =  w_{i_1\cdots i_l}^{n|0} + \kn w_{i_1\cdots i_l}^{n|1} + \kn^2 w_{i_1\cdots i_l}^{n|2} + \kn^3 w_{i_1\cdots i_l}^{n|3} + \cdots.
\end{equation}
The conservative variables $w^0$, $w_i^0$ and $w^1$ are regarded as $O(1)$ variables. Straightforward analysis leads to the following results:
\begin{gather}
\label{eq:order0}
w^{n|0} = w^{n|1} = 0, \quad n \geqslant 2;  \qquad w_i^{n|0} = 0, \quad n \geqslant 1; \\
w_{i_1\cdots i_l}^{n|0} = w_{i_1\cdots i_l}^{n|1} = 0, \quad l = 3; \qquad w_{i_1\cdots i_l}^{n|0} = w_{i_1\cdots i_l}^{n|1} = w_{i_1\cdots i_l}^{n|2} = 0, \quad l \geqslant 4; \\
\label{eq:order1}
w_i^{n|1} = \beta_1^{(1),n} \frac{\partial w^1}{\partial x_i}, \quad n \geqslant 1; \qquad w_{ij}^{n|1} = \beta_2^{(1),n} \frac{\partial w_{\langle i}^0}{\partial x_{j\rangle}}; \\
\label{eq:order2}
w^{n|2} = \gamma_0^{(2),n} \frac{\partial w_j^{1|1}}{\partial x_j}, \quad n \geqslant 2; \qquad
w_i^{n|2} =  \gamma_{t1}^{(1),n} \frac{\partial w^{1|1}_i}{\partial t} + \gamma_{s1}^{(1),n} \frac{\partial w^{0|1}_{ij}}{\partial x_j}, \quad n \geqslant 2; \\
\label{eq:order3}
w_{ij}^{n|2}  =  \gamma_{t2}^{(1),n} \frac{\partial w^{0|1}_{ij}}{\partial t} + \gamma_{s2}^{(1),n} \frac{\partial w^{1|1}_{\langle i}}{\partial x_{j \rangle}}, \quad n \geqslant 1;
\qquad w_{ijk}^{n|2} = \gamma_3^{(2),n} \frac{\partial w_{\langle ij}^{0|1}}{\partial x_{k \rangle}}.
\end{gather}
The coefficients $\beta$ and $\gamma$ are all constants depending on the collision term.
Note that \eqref{eq:order1} implies the Navier-Stokes law and the Fourier law, and these equations lead to the following linear relationship between the moments:
\begin{equation}
\label{eq:1st_order_moments}
w_i^{n|1} = \frac{\beta_1^{(1),n}}{\beta_1^{(1),1}} w_i^{1|1}, \quad n \geqslant 1; \qquad w_{ij}^{n|1} = \frac{\beta_2^{(1),n}}{\beta_2^{(1),0}} w_{ij}^{n|0}.
\end{equation}
By the definition of $w_{i_1\cdots i_l}^n$ (see \eqref{eq:def_w}), the above equations indicate
\begin{equation}
\label{eq:2nd_order1}
\left\langle \psi_i^n - \frac{\beta_1^{(1),n}}{\beta_1^{(1),1}} \psi_i^1, f \right\rangle \sim O(\kn^2), \quad n \geqslant 1; \qquad\left\langle \psi_{ij}^n - \frac{\beta_2^{(1),n}}{\beta_2^{(1),0}} \psi_{ij}^0, f \right\rangle \sim O(\kn^2).
\end{equation}
Meanwhile, \eqref{eq:order0} yields
\begin{equation}
\label{eq:2nd_order2}
\langle \psi^n, f \rangle \sim O(\kn^2), \quad n \geqslant 2; \qquad \langle \psi_{i_1\cdots i_l}^n, f \rangle \sim O(\kn^2), \quad l \geqslant 3.
\end{equation}
Recall that we want to choose the 13 variables of the our equations to be $\langle \tilde{\boldsymbol{\phi}}_{13}, f \rangle$ with $\tilde{\boldsymbol{\phi}}_{13}$ satisfying $\langle r, f\rangle \sim o(\kn)$ whenever $\langle \tilde{\boldsymbol{\phi}}_{13}, r\rangle = 0$. A straightforward approach is to select $\tilde{\boldsymbol{\phi}}_{13}$ to be a basis of $(\mathbb{S}^{(2)})^{\perp}$, with $\mathbb{S}^{(2)}$ being the linear span of all polynomials appearing in \eqref{eq:2nd_order1}\eqref{eq:2nd_order2}:
\begin{displaymath}
\begin{split}
\mathbb{S}^{(2)} =  \operatorname{span} \left\{ \psi_i^n - \frac{\beta_1^{(1),n}}{\beta_1^{(1),1}} \psi_i^1 \,\Bigg\vert\, n \geqslant 1\right\} \oplus\operatorname{span}\left\{ \psi_{ij}^n - \frac{\beta_2^{(1),n}}{\beta_2^{(1),0}} \psi_{ij}^0 \,\Bigg\vert\, n \geqslant 0\right\} \quad \\
\oplus \operatorname{span}\{ \psi^n \mid n \geqslant 2\} \oplus\operatorname{span}\{ \psi_{i_1\cdots i_l}^n \mid l \geqslant 3\}.
\end{split}
\end{displaymath}
This leads to the following choice of $\tilde{\boldsymbol{\phi}}_{13}$:
\begin{displaymath}
\psi^0, \quad \psi^1, \quad \psi_i^0, \quad \phi_i^1, \quad \phi_{ij}^0,
\end{displaymath}
where $\psi^0$, $\psi^1$, and $\psi_i^0$ correspond to conservative moments, and $\phi_i^1$ and $\phi_{ij}^0$, corresponding to non-equilibrium variables, are defined by
\begin{equation}
\label{eq:phi}
\phi_i^{1} = \sum_{n=1}^{+\infty} c_1^{1,n} \psi_i^n, \qquad
\phi_{ij}^{0} = \sum_{n=0}^{+\infty} c_2^{0,n} \psi_{ij}^n,
\end{equation}
with the coefficients $c_1^{1,n}$ and $c_2^{0,n}$ satisfying
\begin{equation} \label{eq:c11n_c200}
c_1^{1,n} = \frac{\beta_1^{(1),n}}{\beta_1^{(1),1}} c_1^{1,1}, \qquad c_2^{0,n} = \frac{\beta_2^{(1),n}}{\beta_2^{(1),0}} c_2^{0,0}.
\end{equation}
Thus, the 13 moments in our equations include:
\begin{displaymath}
\rho, \quad \theta, \quad v_i, \quad \bar{q}_i = \langle \phi_i^1, f \rangle, \quad \bar{\sigma}_{ij} = \langle \phi_{ij}^0, f \rangle.
\end{displaymath}
In \eqref{eq:phi}, the constants $c_1^{1,1}$ and $c_2^{0,0}$ can be chosen as arbitrary nonzero numbers, and here we choose them such that when the collision model reduces Maxwell molecules ($\eta = 5$), $\bar{q}_i$ and $\bar{\sigma}_{ij}$ reduce to $q_i$ and $\sigma_{ij}$. More precisely, we choose negative $c_1^{1,1}$ and positive $c_2^{0,0}$ such that
\begin{equation} \label{eq:c_normalization}
    \sum_{n=1}^{+\infty} \left( c_1^{1,n} \right)^2 = \frac{15}{2}, \qquad \sum_{n=0}^{+\infty} \left( c_2^{0,n} \right)^2 = 15.
\end{equation}
Note that the definitions of $\bar{q}_i$ and $\bar{\sigma}_{ij}$ depend on the collision model since the coefficients $c_1^{1,n}$ and $c_2^{0,n}$ are determined by $\beta_1^{(1),n}$ and $\beta_2^{(1),0}$, and these constants will vary when the collision model changes.

\subsection{Second-order variables}
To derive a model with the super-Burnett order, second-order moments are needed as auxiliary variables. The choice of these variables also follows the requirement of orthogonality, meaning that we want to find a vector function $\tilde{\psi}(\boldsymbol{\xi})$ such that
\begin{enumerate}
\item $\langle \tilde{\phi}_{13}, \tilde{\psi}\rangle$ = 0.
\item For any function $r(\boldsymbol{\xi})$ such that $\langle r,\tilde{\psi}\rangle = 0$, it holds that $\langle r, f \rangle \sim o(\kn^2)$.
\item The number of components of $\tilde{\psi}$ should be as low as possible.
\end{enumerate}
To find $\tilde{\psi}$, we need the following linear relationship between the second-order terms of moments:
\begin{equation} \label{eq:linear_rel}
\begin{gathered}
w^{n|2} = d^n_{02} w^{2|2}, \quad n \geqslant 2; \qquad w^{n|2}_i = d^n_{12} w^{2|2}_i +  d^n_{13} w^{3|2}_i, \quad n \geqslant 2; \\
w^{n|2}_{ij} = d^n_{21} w^{1|2}_{ij} +  d^n_{22} w^{2|2}_{ij}, \quad n \geqslant 1; \qquad w^{n|2}_{ijk} = d^n_{30} w^{0|2}_{ijk}, \quad n \geqslant 0,
\end{gathered}
\end{equation}
where
\begin{gather*}
d^n_{02} = \frac{\gamma_0^{(2),n}}{\gamma_0^{(2),2}}, \qquad d^n_{30} = \frac{\gamma_3^{(2),n}}{\gamma_3^{(2),0}}, \\
d^n_{12} =   \frac{\gamma^{(1),n}_{t1} \gamma^{(1),3}_{s1}-\gamma^{(1),n}_{s1} \gamma^{(1),3}_{t1}}{\gamma^{(1),3}_{s1} \gamma^{(1),2}_{t1} - \gamma^{(1),2}_{s1} \gamma^{(1),3}_{t1}}, \qquad d^n_{13} =   \frac{\gamma^{(1),n}_{s1} \gamma^{(1),2}_{t1}-\gamma^{(1),n}_{t1} \gamma^{(1),2}_{s1}}{\gamma^{(1),3}_{s1} \gamma^{(1),2}_{t1} - \gamma^{(1),2}_{s1} \gamma^{(1),3}_{t1}}, \\
d^n_{21} =   \frac{\gamma^{(1),n}_{t2} \gamma^{(1),2}_{s2}-\gamma^{(1),n}_{s2} \gamma^{(1),2}_{t2}}{\gamma^{(1),2}_{s2} \gamma^{(1),1}_{t2} - \gamma^{(1),1}_{s2} \gamma^{(1),2}_{t2}},\qquad  d^n_{22} =   \frac{\gamma^{(1),n}_{s2} \gamma^{(1),1}_{t2}-\gamma^{(1),n}_{t2} \gamma^{(1),1}_{s2}}{\gamma^{(1),2}_{s2} \gamma^{(1),1}_{t2} - \gamma^{(1),1}_{s2} \gamma^{(1),2}_{t2}}.
\end{gather*}
These equations can be derived from \eqref{eq:order2}\eqref{eq:order3} by cancelling the differential terms. The equations in \eqref{eq:linear_rel} reveal that low-order parts of the moments can be cancelled by linear combinations:
\begin{equation} \label{eq:oKn2}
\begin{gathered}
w^n - d_{02}^n w^2 \sim o(\kn^2), \quad n \geqslant 2; \\
(w_i^n - \kn w_i^{n|1}) - d_{12}^n (w_i^2 - \kn w_i^{2|1}) - d_{13}^n (w_i^3 - \kn w_i^{3|1}) \sim o(\kn^2), \quad n \geqslant 2; \\
(w_{ij}^n - \kn w_{ij}^{n|1}) - d_{21}^n (w_{ij}^1 - \kn w_{ij}^{1|1}) - d_{22}^n (w_{ij}^2 - \kn w_{ij}^{2|1}) \sim o(\kn^2), \quad n \geqslant 1; \\
w_{ijk}^n - d_{30} w_{ijk}^0 \sim o(\kn^2), \quad n \geqslant 0.
\end{gathered}
\end{equation}
Note that the correctness of \eqref{eq:order2} and \eqref{eq:order3} (and thus \eqref{eq:linear_rel} and \eqref{eq:oKn2}) relies only on the fact \eqref{eq:1st_order_moments}, and does not require the specific choice of $w_i^{n|1}$ and $w_{ij}^{n|1}$ as described in \eqref{eq:order1}. Therefore, in \eqref{eq:oKn2}, we can follow \eqref{eq:2nd_order1} to select
\begin{displaymath}
\kn w_i^{n|1} = \left\langle \frac{\beta_1^{(1),n}}{\beta_1^{(1),1}} \psi_i^1, f \right\rangle, \quad n \geqslant 1; \qquad \kn w_{ij}^{n|1} = \left\langle \frac{\beta_2^{(1),n}}{\beta_2^{(1),0}} \psi_{ij}^0, f \right\rangle, \quad n \geqslant 0,
\end{displaymath}
so that we can find a linear space $\mathbb{S}^{(3)}$ such that $\langle r, f \rangle \sim o(\kn^2)$ for all $r \in \mathbb{S}^{(3)}$. The definition of $\mathbb{S}^{(3)}$ is
\begin{displaymath}
\begin{split}
\mathbb{S}^{(3)} &=  \operatorname{span} \left\{ \psi^n - d_{02}^n \psi^2 \mid n \geqslant 3\right\} \\
&\oplus \operatorname{span}\left\{ \left(\psi_i^n - \frac{\beta_1^{(1),n}}{\beta_1^{(1),1}} \psi_i^1 \right) - d_{12}^n\left( \psi_i^2 - \frac{\beta_1^{(1),2}}{\beta_1^{(1),1}} \psi_i^1 \right) - d_{13}^n \left( \psi_i^3 - \frac{\beta_1^{(1),3}}{\beta_1^{(1),1}} \psi_i^1\right) \,\Bigg\vert\, n \geqslant 4\right\} \\ 
&\oplus \operatorname{span}\left\{ \left(\psi_{ij}^n - \frac{\beta_2^{(1),n}}{\beta_2^{(1),0}} \psi_{ij}^0 \right) - d_{21}^n\left( \psi_{ij}^1 - \frac{\beta_2^{(1),1}}{\beta_2^{(1),0}} \psi_{ij}^0 \right) - d_{22}^n \left( \psi_{ij}^2 - \frac{\beta_2^{(1),2}}{\beta_2^{(1),0}} \psi_{ij}^0\right) \,\Bigg\vert\, n \geqslant 3\right\} \\ 
& \oplus\operatorname{span}\left\{ \psi_{ijk}^n - d_{30} \psi_{ijk}^0 \mid n \geqslant 1\right\} \oplus \operatorname{span}\left\{ \psi_{i_1 \cdots i_l}^n \mid l \geqslant 4\right\}.
\end{split}
\end{displaymath}
The second-order variables are chosen to be a basis of $\mathbb{S}^{(2)} \cap (\mathbb{S}^{(3)})^{\perp}$.

The function space $\mathbb{S}^{(2)} \cap (\mathbb{S}^{(3)})^{\perp}$ has the form
\begin{equation} \label{eq:V2}
\mathbb{S}^{(2)} \cap (\mathbb{S}^{(3)})^{\perp} = \operatorname{span} \{ \phi^{2}, \phi_{i}^{2}, \phi_{i}^{3}, \phi_{ij}^{1}, \phi_{ij}^{2},\phi_{ijk}^{0} \},
\end{equation}
where
\begin{equation} \label{eq:2nd_phi}
\begin{gathered}
\phi^{2} = \sum_{n=2}^{+\infty} c_0^{2,n} \psi^n, \quad
\phi_{i}^{2} = \sum_{n=2}^{+\infty} c_{1}^{2,n} \left(\psi_{i}^{n} - \frac{\beta_1^{(1),n}}{\beta_1^{(1),1}} \psi^1_i\right), \quad
\phi_{i}^{3} = \sum_{n=2}^{+\infty} c_{1}^{3,n} \left(\psi_{i}^{n} - \frac{\beta_1^{(1),n}}{\beta_1^{(1),1}} \psi^1_i\right) \\
\phi_{ij}^{1} = \sum_{n=1}^{+\infty} c_{2}^{1,n} \left(\psi_{ij}^{n} - \frac{\beta_2^{(1),n}}{\beta_2^{(1),0}} \psi^0_{ij}\right), \quad \phi_{ij}^{2} = \sum_{n=1}^{+\infty} c_{2}^{2,n} \left(\psi_{ij}^{n} - \frac{\beta_2^{(1),n}}{\beta_2^{(1),0}} \psi^0_{ij}\right), \quad
\phi_{ijk}^{0} = \sum_{n=0}^{+\infty} c_3^{0,n} \psi_{ijk}^n.
\end{gathered}
\end{equation}
The coefficients $c_k^{\ell,n}$ can be represented as functions of $\beta_k^{(\ell), n}$, and the details can be found in the appendix \ref{sec:c}. Here we only emphasize that the choice has been made such that $\langle \phi_i^2, \phi_i^3\rangle = \langle \phi_{ij}^1, \phi_{ij}^2 \rangle = 0$, and in the case of Maxwell molecules, $\phi_i^{2,3} = \psi_i^{2,3}$ and $\phi_{ij}^{1,2} = \psi_{ij}^{1,2}$. The result \eqref{eq:V2} indicates that the minimum number of second-order variables is 24 ($= 1+3+3+5+5+7$). These variables will be denoted by
\begin{equation}
\label{eq:2nd-order}
\begin{gathered}
u^2 := \langle \phi^2, f \rangle, \qquad
u_i^2 := \langle \phi_i^2, f \rangle, \qquad
u_i^3 := \langle \phi_i^3, f \rangle, \\
u_{ij}^1 := \langle \phi_{ij}^1, f \rangle, \qquad
u_{ij}^2 := \langle \phi_{ij}^2, f \rangle, \qquad
u_{ijk}^0 := \langle \phi_{ijk}^0, f \rangle.
\end{gathered}
\end{equation}

\begin{remark} \rm
A similar procedure has been applied in \cite{yang2024siap} to obtain second-order variables in the case of steady-state equations. When time derivatives are absent, we have simpler relationships $w_i^{n|2} = d_{12}^n w_i^{2|2}$ and $w_{ij}^{n|2} = d_{21}^n w_{ij}^{1|2}$ in place of 
\eqref{eq:linear_rel}, resulting in only 16 second-order variables. In \cite{Struchtrup2013pof}, when time-dependent R13 equations were derived for hard-sphere molecules, only $29$ ($= 13+ 16$) equations are considered. However, the resulting equations may not hold the symmetry as in \eqref{eq:moment}. In this paper, we are going to include all the $37$ ($= 13 + 24$) equations in our derivation to achieve the $L^2$ stability.
\end{remark}

\subsection{Derivation of R13 equations} \label{sec:R13_derivation}
By the analysis above, the distribution function $f$ can be separated into four parts according to the order of magnitude:
\begin{displaymath}
f = f^{(0)} + f^{(1)} + f^{(2)} + f^{(\mathrm{r})},
\end{displaymath}
where $f^{(k)}$ is the projection of $f$ onto the function space $\mathbb{V}^{(k)}$, defined by
\begin{gather*}
\mathbb{V}^{(0)} = \operatorname{span}\{\psi^0, \psi^1, \psi_i^0\}, \qquad
\mathbb{V}^{(1)} = (\mathbb{V}^{(0)})^{\perp} \cap (\mathbb{S}^{(2)})^{\perp}, \\
\mathbb{V}^{(2)} = \mathbb{S}^{(2)} \cap (\mathbb{S}^{(3)})^{\perp}, \qquad \mathbb{V}^{(\mathrm{r})} = \mathbb{S}^{(3)}.
\end{gather*}
It is clear that $f^{(0)}$, $f^{(1)}$ and $f^{(2)}$ represent, respectively, the zeroth-, first- and second-order parts of $f$, and $f^{(\mathrm{r})}$ represents the remaining part of $f$ that has order $o(\kn^2)$. For simplicity, we use $\mathcal{P}^{(k)}$ to denote the projection operator onto $\mathbb{V}^{(k)}$. A key observation in our derivation is that the super-Burnett equations can be derived from the following equations without using $f^{(\mathrm{r})}$:
\begin{equation}
\label{eq:R13_abstract}
\begin{split}
\frac{\partial}{\partial t} \begin{pmatrix} f^{(0)} \\ f^{(1)} \\ 0 \end{pmatrix} +
\begin{pmatrix} \partial_{x_i} \mathcal{P}^{(0)} \xi_i f^{(0)} + 
\partial_{x_i} \mathcal{P}^{(0)} \xi_i f^{(1)} + 
\partial_{x_i} \mathcal{P}^{(0)} \xi_i f^{(2)} \\
\partial_{x_i} \mathcal{P}^{(1)} \xi_i f^{(0)} + 
\partial_{x_i} \mathcal{P}^{(1)} \xi_i f^{(1)} + 
\partial_{x_i} \mathcal{P}^{(1)} \xi_i f^{(2)} \\
\partial_{x_i} \mathcal{P}^{(2)} \xi_i f^{(0)} + 
\partial_{x_i} \mathcal{P}^{(2)} \xi_i f^{(1)}
\end{pmatrix} \qquad \\
=
\frac{1}{\kn} \begin{pmatrix}
0 \\
\mathcal{P}^{(1)} \mathcal{L} f^{(1)} + 
\mathcal{P}^{(1)} \mathcal{L} f^{(2)} \\
\mathcal{P}^{(2)} \mathcal{L} f^{(1)} + 
\mathcal{P}^{(2)} \mathcal{L} f^{(2)}
\end{pmatrix}.
\end{split}
\end{equation}
This form is similar to Grad's moment equations with truncation up to the second order, but the temporal and spatial derivatives of $f^{(2)}$ in the last equation are removed, allowing us to express $f^{(2)}$ using $f^{(0)}$ and $f^{(1)}$ according to the last equation:
\begin{displaymath}
f^{(2)} = \left[\mathcal{P}^{(2)}\mathcal{L}|_{\mathbb{V}^{(2)}} \right]^{-1} \left( \partial_{x_i} \mathcal{P}^{(2)} \xi_i f^{(0)} + 
\partial_{x_i} \mathcal{P}^{(2)} \xi_i f^{(1)} - \mathcal{P}^{(2)} \mathcal{L} f^{(1)} \right).
\end{displaymath}
Thus, the second-order part $f^{(2)}$ can be eliminated from the equations, and the resulting equations contain only $f^{(0)}$ and $f^{(1)}$, which can be represented as 13-moment equations.

The reason why \eqref{eq:R13_abstract} has the super-Burnett order will be detailed in the next subsection. Here we will apply this result to derive the R13 equations. The derivation requires explicit formulations of $f^{(0)}$, $f^{(1)}$ and $f^{(2)}$:
\begin{equation}
\label{eq:f012}
\begin{aligned}
f^{(0)} &= \rho \psi^{0} - \sqrt{\frac{3}{2}} \theta \psi^{1} + \sqrt{3} v_i \psi_i^{0}, \\
 f^{(1)} &= \frac{2}{5} \bar{q}_i \phi_i^{1} + \frac{1}{2} \bar{\sigma}_{ij} \phi_{ij}^{0}, \\
f^{(2)} &= u^{2} \phi^{2} + 3u_i^{2} \phi_i^{2} + 3u_i^{3} \phi_i^{3}+ \frac{15}{2}u_{ij}^{1} \phi_{ij}^{1} + \frac{15}{2}u_{ij}^{2} \phi_{ij}^{2}  + \frac{35}{2}u_{ijk}^{0} \phi_{ijk}^{0}.
\end{aligned}
\end{equation}
Then, the equations \eqref{eq:R13_abstract} can be converted to moment equations by the Galerkin method. For example, the equation of $\bar{\sigma}_{ij}$ can be obtained by
\begin{displaymath}
\langle \phi_{ij}^0, f^{(1)} \rangle + \frac{\partial}{\partial x_k} \langle \phi_{ij}^0, \xi_k (f^{(0)} + f^{(1)} + f^{(2)}) \rangle = \langle \phi_{ij}^0, \mathcal{L} (f^{(1)} + f^{(2)}) \rangle,
\end{displaymath}
and the equation of $u_i^2$ is
\begin{displaymath}
\frac{\partial}{\partial x_k} \langle \phi_i^2, \xi_k(f^{(0)} + f^{(1)}) \rangle = \langle \phi_i^2, \mathcal{L}(f^{(1)} + f^{(2)}) \rangle.
\end{displaymath}
All these inner products can be computed explicitly, and the complete 37-moment equations are
\begin{align}
    \label{eqa}
      &\frac{\partial \rho}{\partial t}+ \frac{\partial v_j}{\partial x_j}  = 0,  \\ 
    \label{eqb}
     &\frac{\partial \theta}{\partial t} + \frac{2}{3}  \frac{\partial v_j}{\partial x_j}  + \frac{2}{3}  c_1^{1,1}\frac{\partial \bar{q}_j}{\partial x_j} - \sqrt{\frac{10}{3}}\left( c^{2,1}_1 \frac{\partial u^{2}_j}{\partial x_j} + c^{3,1}_1 \frac{\partial u^{3}_j}{\partial x_j} \right)   = 0 ,\\ 
    \label{eqc}
     &\frac{\partial v_i}{\partial t} + \frac{\partial \rho}{\partial x_i} + \frac{\partial \theta}{\partial x_i} + c_2^{0,0} \frac{\partial \bar{\sigma}_{ij}}{\partial x_j} + \sqrt{15} \left( c^{1,0}_2 \frac{\partial u^{1}_{ij}}{\partial x_j} +    c^{2,0}_2 \frac{\partial u^{2}_{ij}}{\partial x_j} \right)  = 0, \\ 
      & \frac{\partial \bar{q}_i}{\partial t} + \frac{5}{2}c_1^{1,1} \frac{\partial \theta}{\partial x_i} - \frac{\sqrt{2}}{6} A_{45} \frac{\partial \bar{\sigma}_{ij}}{\partial x_j} - \sqrt{\frac{5}{6}} \left( A_{49}\frac{\partial u^{1}_{ij}}{\partial x_j} + A_{4,10}\frac{\partial u^{2}_{ij}}{\partial x_j} + A_{46} \frac{\partial u^{2}}{\partial x_i} \right) \notag \\
      & \hspace{120pt}= \frac{1}{\kn}( \frac{1}{3} \mathscr{L}^{(11)}_1 \bar{q}_{i} - \sqrt{\frac{5}{6}} \mathscr{L}^{(12)}_1 u^{2}_{i} - \sqrt{\frac{5}{6}} \mathscr{L}^{(13)}_1 u^{3}_{i}), \label{eqd1} \\ 
      & \frac{\partial \bar{\sigma}_{ij}}{\partial t} + 2 c_2^{0,0} \frac{\partial v_{\langle i}}{\partial x_{j \rangle}}  -\frac{2\sqrt{2}}{15} A_{45}\frac{\partial \bar{q}_{\langle i}}{\partial x_{j \rangle}} + \frac{2}{\sqrt{15}} \left( A_{57}\frac{\partial u^{2}_{\langle i}}{\partial x_{j \rangle}} + A_{58}\frac{\partial u^{3}_{\langle i}}{\partial x_{j \rangle}} + A_{5,11} \frac{\partial u^{0}_{ijk}}{\partial x_k} \right) \notag \\
      & \hspace{100pt}= \frac{1}{\kn}(\frac{2}{15} \mathscr{L}^{(00)}_2 \bar{\sigma}_{ij}+ \frac{2}{\sqrt{15}} \mathscr{L}^{(01)}_2 u^{1}_{ij}+ \frac{2}{\sqrt{15}} \mathscr{L}^{(02)}_2 u^{2}_{ij}),  \label{eqe1} \\ 
      \label{eqf}
      & -\sqrt{\frac{2}{15}} A_{46} \frac{\partial \bar{q}_j}{\partial x_j}   =  \frac{1}{\kn}\mathscr{L}^{(22)}_0 u^{2},\\ 
      \label{eqd2}
          &    -\sqrt{\frac{15}{2}} c^{2,1}_1 \frac{\partial \theta}{\partial x_i} + \sqrt{\frac{1}{15}} A_{57} \frac{\partial \bar{\sigma}_{ij}}{\partial x_j}  = \frac{1}{\kn}(-\sqrt{\frac{2}{15}} \mathscr{L}^{(21)}_1 \bar{q}_{i}+\mathscr{L}^{(22)}_1 u^{2}_{i}+\mathscr{L}^{(23)}_1 u^{3}_{i}),\\ 
                \label{eqd3}
           &   -\sqrt{\frac{15}{2}} c^{3,1}_1 \frac{\partial \theta}{\partial x_i} + \sqrt{\frac{1}{15}} A_{58} \frac{\partial \bar{\sigma}_{ij}}{\partial x_j} = \frac{1}{\kn}(-\sqrt{\frac{2}{15}}\mathscr{L}^{(31)}_1 \bar{q}_{i}+\mathscr{L}^{(32)}_1 u^{2}_{i}+\mathscr{L}^{(33)}_1 u^{3}_{i}),\\ 
        \label{eqe2}
        &  \sqrt{15} c^{1,0}_2 \frac{\partial v_{\langle i}}{\partial x_{j \rangle}} -\sqrt{\frac{2}{15}} A_{49}\frac{\partial \bar{q}_{\langle i}}{\partial x_{j \rangle}}   = \frac{1}{\kn}(\sqrt{\frac{1}{15}} \mathscr{L}^{(10)}_2 \bar{\sigma}_{ij}+\mathscr{L}^{(11)}_2 u^{1}_{ij}+\mathscr{L}^{(12)}_2 u^{2}_{ij}),\\ 
        \label{eqe3}
        &  \sqrt{15} c^{2,0}_2 \frac{\partial v_{\langle i}}{\partial x_{j \rangle}} -\sqrt{\frac{2}{15}} A_{4,10}\frac{\partial \bar{q}_{\langle i}}{\partial x_{j \rangle}}   = \frac{1}{\kn}(\sqrt{\frac{1}{15}} \mathscr{L}^{(20)}_2 \bar{\sigma}_{ij}+\mathscr{L}^{(21)}_2 u^{1}_{ij}+\mathscr{L}^{(22)}_2 u^{2}_{ij}),\\
       \label{eqg}
       & \sqrt{\frac{1}{15}} A_{5,11} \frac{\partial \bar{\sigma}_{\langle ij}}{\partial x_{k \rangle}}  = \frac{1}{\kn} \mathscr{L}^{(00)}_3 u^{0}_{ijk}. 
 \end{align}
The coefficients $A_{ij}$ are series of $c_k^{l,n}$, with their expressions provided in the appendix \ref{sec:appendixAcoeff}. 

It is now clear that the second-order variables defined in \eqref{eq:2nd-order} can be solved according to the equations \eqref{eqf}--\eqref{eqg}. Plugging the results into \eqref{eqb}--\eqref{eqe1} yields our final equations \eqref{newr13:eqa}--\eqref{newr13:eqe1}. Here we would like to write down the explicit expressions for a few coefficients in \eqref{newr13:eqa}--\eqref{newr13:eqe1}:
\begin{gather*}
k_1 = -5\begin{pmatrix} c_1^{2,1} & c_1^{3,1}
\end{pmatrix} \begin{pmatrix}
\mathscr{L}_1^{(22)} & \mathscr{L}_1^{(23)} \\
\mathscr{L}_1^{(32)} & \mathscr{L}_1^{(33)}
\end{pmatrix}^{-1}\begin{pmatrix} c_1^{2,1} \\ c_1^{3,1}
\end{pmatrix}, \\
k_2 = -\frac{\sqrt{2}}{3} \begin{pmatrix} c_1^{2,1} & c_1^{3,1}
\end{pmatrix} \begin{pmatrix}
\mathscr{L}_1^{(22)} & \mathscr{L}_1^{(23)} \\
\mathscr{L}_1^{(32)} & \mathscr{L}_1^{(33)}
\end{pmatrix}^{-1}\begin{pmatrix} A_{57} \\ A_{58}
\end{pmatrix}, \\
k_3 = -15 \begin{pmatrix} c_2^{1,0} & c_2^{2,0}
\end{pmatrix} \begin{pmatrix}
\mathscr{L}_2^{(11)} & \mathscr{L}_2^{(12)} \\
\mathscr{L}_2^{(21)} & \mathscr{L}_2^{(22)}
\end{pmatrix}^{-1}\begin{pmatrix} c_2^{1,0} \\ c_2^{2,0}
\end{pmatrix}, \\
k_4 = \sqrt{2} \begin{pmatrix} A_{49} & A_{4,10}
\end{pmatrix} \begin{pmatrix}
\mathscr{L}_2^{(11)} & \mathscr{L}_2^{(12)} \\
\mathscr{L}_2^{(21)} & \mathscr{L}_2^{(22)}
\end{pmatrix}^{-1}\begin{pmatrix} c_2^{1,0} \\ c_2^{2,0}
\end{pmatrix}, \\
k_7 = -\frac{5}{36} \begin{pmatrix} A_{49} & A_{4,10}
\end{pmatrix} \begin{pmatrix}
\mathscr{L}_2^{(11)} & \mathscr{L}_2^{(12)} \\
\mathscr{L}_2^{(21)} & \mathscr{L}_2^{(22)}
\end{pmatrix}^{-1}\begin{pmatrix} A_{49} \\ A_{4,10}
\end{pmatrix}, \\
k_{10} = -\frac{2}{15}
\begin{pmatrix} A_{57} & A_{58}
\end{pmatrix} \begin{pmatrix}
\mathscr{L}_1^{(22)} & \mathscr{L}_1^{(23)} \\
\mathscr{L}_1^{(32)} & \mathscr{L}_1^{(33)}
\end{pmatrix}^{-1}\begin{pmatrix} A_{57} \\ A_{58}
\end{pmatrix}.
\end{gather*}
Since the matrices 
\begin{displaymath} \begin{pmatrix}
\mathscr{L}_1^{(22)} & \mathscr{L}_1^{(23)} \\
\mathscr{L}_1^{(32)} & \mathscr{L}_1^{(33)}
\end{pmatrix} \quad \text{and} \quad
\begin{pmatrix}
\mathscr{L}_2^{(11)} & \mathscr{L}_2^{(12)} \\
\mathscr{L}_2^{(21)} & \mathscr{L}_2^{(22)}
\end{pmatrix}
\end{displaymath}
are symmetric and negative definite, we can obtain \eqref{eq:k_ineq} by the Cauchy-Schwarz inequality, which completes the proof of the H-theorem \eqref{eq:H-theorem}.

\subsection{Verification of the super-Burnett order}
To show that the equations \eqref{eq:R13_abstract} have super-Burnett order, we rewrite the Boltzmann equation in the form
\begin{displaymath}
\frac{\partial}{\partial t} \begin{pmatrix} f^{(0)} \\ f^{(1)} \\ f^{(2)} \\ f^{(\mathrm{r})} \end{pmatrix} + \begin{pmatrix}
  \mathcal{A}^{(00)} &  \mathcal{A}^{(01)} &  \mathcal{A}^{(02)} &  \mathcal{A}^{(0\mathrm{r})} \\
  \mathcal{A}^{(10)} &  \mathcal{A}^{(11)} &  \mathcal{A}^{(12)} &  \mathcal{A}^{(1\mathrm{r})} \\
  \mathcal{A}^{(20)} &  \mathcal{A}^{(21)} &  \mathcal{A}^{(22)} &  \mathcal{A}^{(2\mathrm{r})} \\
  \mathcal{A}^{(\mathrm{r}0)} &  \mathcal{A}^{(\mathrm{r}1)} &  \mathcal{A}^{(\mathrm{r}2)} &  \mathcal{A}^{(\mathrm{rr})}
\end{pmatrix} \begin{pmatrix} f^{(0)} \\ f^{(1)} \\ f^{(2)} \\ f^{(\mathrm{r})} \end{pmatrix} =
\frac{1}{\kn} \begin{pmatrix}
  0 & 0 & 0 & 0 \\
  0 & \mathcal{L}^{(11)} &  \mathcal{L}^{(12)} &  \mathcal{L}^{(1\mathrm{r})} \\
  0 & \mathcal{L}^{(21)} &  \mathcal{L}^{(22)} &  \mathcal{L}^{(2\mathrm{r})} \\
  0 & \mathcal{L}^{(\mathrm{r}1)} &  \mathcal{L}^{(\mathrm{r}2)} &  \mathcal{L}^{(\mathrm{rr})}
\end{pmatrix} \begin{pmatrix} f^{(0)} \\ f^{(1)} \\ f^{(2)} \\ f^{(\mathrm{r})} \end{pmatrix},
\end{displaymath}
where
\begin{displaymath}
\mathcal{A}^{(kl)} f^{(l)} = \frac{\partial}{\partial x_i} \mathcal{A}_i^{(kl)} f^{(l)}, \qquad \mathcal{A}_i^{(kl)} = \mathcal{P}^{(k)} \xi_i \mathcal{P}^{(l)}, \qquad
\mathcal{L}^{(kl)} f^{(l)} = \mathcal{P}^{(k)} \mathcal{L} f^{(l)},
\end{displaymath}
and they satisfy
\begin{equation} \label{eq:adjoint}
\langle g^{(k)}, \mathcal{A}_i^{(kl)} g^{(l)} \rangle = \langle \mathcal{A}_i^{(lk)} g^{(k)}, g^{(l)} \rangle, \qquad
\langle g^{(k)}, \mathcal{L}^{(kl)} g^{(l)} \rangle = \langle \mathcal{L}^{(lk)} g^{(k)}, g^{(l)} \rangle
\end{equation}
for all $g^{(k)} \in \mathbb{V}^{(k)}$ and $g^{(l)} \in \mathbb{V}^{(l)}$.

It is well-known that the super-Burnett equations can be derived via three \emph{Maxwellian iterations} applied to the Boltzmann equation, which begin with the initial value
\begin{displaymath}
  f_0^{(1)} = f_0^{(2)} = f_0^{(r)} = 0,
\end{displaymath}
and proceeds from the $j$th iteration to the $(j+1)$th iteration according to
\begin{equation} \label{eq:max_iter}
\frac{\partial}{\partial t} \begin{pmatrix} f_j^{(1)} \\ f_j^{(2)} \\ f_j^{(\mathrm{r})} \end{pmatrix} + \begin{pmatrix}
  \mathcal{A}^{(10)} &  \mathcal{A}^{(11)} &  \mathcal{A}^{(12)} &  \mathcal{A}^{(1\mathrm{r})} \\
  \mathcal{A}^{(20)} &  \mathcal{A}^{(21)} &  \mathcal{A}^{(22)} &  \mathcal{A}^{(2\mathrm{r})} \\
  \mathcal{A}^{(\mathrm{r}0)} &  \mathcal{A}^{(\mathrm{r}1)} &  \mathcal{A}^{(\mathrm{r}2)} &  \mathcal{A}^{(\mathrm{rr})}
\end{pmatrix} \begin{pmatrix} f^{(0)} \\ f_j^{(1)} \\ f_j^{(2)} \\ f_j^{(\mathrm{r})} \end{pmatrix} =
\frac{1}{\kn} \begin{pmatrix}
  \mathcal{L}^{(11)} &  \mathcal{L}^{(12)} &  \mathcal{L}^{(1\mathrm{r})} \\
  \mathcal{L}^{(21)} &  \mathcal{L}^{(22)} &  \mathcal{L}^{(2\mathrm{r})} \\
  \mathcal{L}^{(\mathrm{r}1)} &  \mathcal{L}^{(\mathrm{r}2)} &  \mathcal{L}^{(\mathrm{rr})}
\end{pmatrix} \begin{pmatrix} f_{j+1}^{(1)} \\[2pt] f_{j+1}^{(2)} \\[2pt] f_{j+1}^{(\mathrm{r})} \end{pmatrix}.
\end{equation}
The super-Burnett equations can be written in the form
\begin{equation} \label{eq:superBurnett}
\frac{\partial f^{(0)}}{\partial t} + \mathcal{A}^{(00)} f^{(0)} +  \mathcal{A}^{(01)} f_3^{(1)} +  \mathcal{A}^{(02)} f_3^{(2)} +  \mathcal{A}^{(02)} f_3^{(3)} = 0.
\end{equation}
To show that the equations \eqref{eq:R13_abstract} have the super-Burnett order, we just need to demonstrate that the derivation of super-Burnett equations does not involve any operators that do not appear in \eqref{eq:R13_abstract}. Below, we will carry out the derivation step by step in the following subsections.

\subsubsection{First Maxwellian iteration}
Setting $j = 0$ in \eqref{eq:max_iter} and using the initial data $f_0^{(1)} = f_0^{(2)} = f_0^{(\mathrm{r})} = 0$, we obtain
\begin{displaymath}
\begin{pmatrix}
  \mathcal{A}^{(10)} f^{(0)} \\
  \mathcal{A}^{(20)} f^{(0)} \\
  \mathcal{A}^{(\mathrm{r}0)} f^{(0)}
\end{pmatrix} =
\frac{1}{\kn} \begin{pmatrix}
  \mathcal{L}^{(11)} &  \mathcal{L}^{(12)} &  \mathcal{L}^{(1\mathrm{r})} \\
  \mathcal{L}^{(21)} &  \mathcal{L}^{(22)} &  \mathcal{L}^{(2\mathrm{r})} \\
  \mathcal{L}^{(\mathrm{r}1)} &  \mathcal{L}^{(\mathrm{r}2)} &  \mathcal{L}^{(\mathrm{rr})}
\end{pmatrix} \begin{pmatrix} f_1^{(1)} \\[2pt] f_1^{(2)} \\[2pt] f_1^{(\mathrm{r})} \end{pmatrix}.
\end{displaymath}
By Gaussian elimination, 
\begin{equation}
\begin{pmatrix}
  \mathcal{A}^{(10)} f^{(0)} \\
  \left[\mathcal{A}^{(20)} - \mathcal{L}^{(21)} (\mathcal{L}^{(11)})^{-1} \mathcal{A}^{(10)}\right] f^{(0)} \\
  \left[\mathcal{A}^{(\mathrm{r}0)} - \mathcal{L}^{(\mathrm{r}1)} (\mathcal{L}^{(11)})^{-1} \mathcal{A}^{(10)}\right] f^{(0)} \\
\end{pmatrix} =
\frac{1}{\kn} \begin{pmatrix}
  \mathcal{L}^{(11)} &  \mathcal{L}^{(12)} &  \mathcal{L}^{(1\mathrm{r})} \\
  & \mathcal{L}_*^{(22)} & \mathcal{L}_*^{(2\mathrm{r})} \\
  & \mathcal{L}_*^{(\mathrm{r}2)} &  \mathcal{L}_*^{(\mathrm{rr})}
\end{pmatrix} \begin{pmatrix} f_1^{(1)} \\[2pt] f_1^{(2)} \\[2pt] f_1^{(\mathrm{r})} \end{pmatrix},
\end{equation}
where
\begin{gather*}
\mathcal{L}_*^{(22)} = \mathcal{L}^{(22)} - \mathcal{L}^{(21)} (\mathcal{L}^{(11)})^{-1} \mathcal{L}^{(12)}, \qquad
\mathcal{L}_*^{(2\mathrm{r})} = \mathcal{L}^{(2\mathrm{r})} - \mathcal{L}^{(21)} (\mathcal{L}^{(11)})^{-1} \mathcal{L}^{(1\mathrm{r})}, \\
\mathcal{L}_*^{(\mathrm{r}2)} = \mathcal{L}^{(\mathrm{r}2)} - \mathcal{L}^{(\mathrm{r}1)} (\mathcal{L}^{(11)})^{-1} \mathcal{L}^{(12)}, \qquad
\mathcal{L}_*^{(\mathrm{rr})} = \mathcal{L}^{(\mathrm{rr})} - \mathcal{L}^{(\mathrm{r}1)} (\mathcal{L}^{(11)})^{-1} \mathcal{L}^{(1\mathrm{r})}.
\end{gather*}
Note that throughout the iterations, we always have $f_j^{(2)} \sim o(\kn)$ and $f_j^{(r)} \sim o(\kn^2)$, which requires
\begin{equation} \label{eq:op_relationship}
  \mathcal{A}^{(20)} - \mathcal{L}^{(21)} (\mathcal{L}^{(11)})^{-1} \mathcal{A}^{(10)} =
  \mathcal{A}^{(\mathrm{r}0)} - \mathcal{L}^{(\mathrm{r}1)} (\mathcal{L}^{(11)})^{-1} \mathcal{A}^{(10)} = 0.
\end{equation}
Thus, the result of the first Maxwellian iteration can be written as
\begin{displaymath}
f_1^{(1)} = \kn (\mathcal{L}^{(11)})^{-1} \mathcal{A}^{(10)} f^{(0)}, \qquad f_1^{(2)} = 0, \qquad f_1^{(\mathrm{r})} = 0.
\end{displaymath}
For simplicity, we define $\mathcal{S}_1^{(10)} = (\mathcal{L}^{(11)})^{-1} \mathcal{A}^{(10)}$, so that $f_1^{(1)} = \kn \mathcal{S}_1^{(10)} f^{(0)}$.

\subsubsection{Second Maxwellian iteration}
Setting $j = 0$ in \eqref{eq:max_iter} and using the result of the first Maxwellian iteration, we get
\begin{displaymath}
\begin{split}
\frac{\partial}{\partial t} \begin{pmatrix}
  \kn \mathcal{S}_1^{(10)} f^{(0)} \\
  0 \\ 0
\end{pmatrix} +
\begin{pmatrix}
  \left( \mathcal{A}^{(10)} + \kn \mathcal{A}^{(11)} \mathcal{S}_1^{(10)} \right) f^{(0)} \\
  \left( \mathcal{A}^{(20)} + \kn \mathcal{A}^{(21)} \mathcal{S}_1^{(10)} \right) f^{(0)} \\
  \left( \mathcal{A}^{(\mathrm{r}0)} + \kn \mathcal{A}^{(\mathrm{r}1)} \mathcal{S}_1^{(10)} \right) f^{(0)}
\end{pmatrix}
= \frac{1}{\kn} \begin{pmatrix}
  \mathcal{L}^{(11)} &  \mathcal{L}^{(12)} &  \mathcal{L}^{(1\mathrm{r})} \\
  \mathcal{L}^{(21)} &  \mathcal{L}^{(22)} &  \mathcal{L}^{(2\mathrm{r})} \\
  \mathcal{L}^{(\mathrm{r}1)} &  \mathcal{L}^{(\mathrm{r}2)} &  \mathcal{L}^{(\mathrm{rr})}
\end{pmatrix} \begin{pmatrix} f_2^{(1)} \\[2pt] f_2^{(2)} \\[2pt] f_2^{(\mathrm{r})} \end{pmatrix}.
\end{split}
\end{displaymath}
Next, we perform the following operations:
\begin{enumerate}
\item Replace the time derivative with the spatial derivative using
\begin{displaymath}
\frac{\partial f^{(0)}}{\partial t} + \mathcal{A}^{(00)} f^{(0)} = O(\kn),
\end{displaymath}
where the $O(\kn)$ part can be discarded and it does not affect the order of accuracy.
\item Apply Gaussian elimination and \eqref{eq:op_relationship} to obtain
\begin{equation} \label{eq:ge}
\begin{split}
\begin{pmatrix}
\left[\mathcal{A}^{(10)} + \kn \mathcal{A}^{(11)} \mathcal{S}_1^{(10)} - \kn \mathcal{S}_1^{(10)} \mathcal{A}^{(00)}\right] f^{(0)} \\
\kn \left[ \mathcal{A}^{(21)} \mathcal{S}_1^{(10)} - \mathcal{L}^{(21)} (\mathcal{L}^{(11)})^{-1} \left( \mathcal{A}^{(11)} \mathcal{S}_1^{(10)} - \mathcal{S}_1^{(10)} \mathcal{A}^{(00)} \right) \right] f^{(0)} \\
\kn \left[ \mathcal{A}^{(\mathrm{r}1)} \mathcal{S}_1^{(10)} - \mathcal{L}^{(\mathrm{r}1)} (\mathcal{L}^{(11)})^{-1} \left( \mathcal{A}^{(11)} \mathcal{S}_1^{(10)} - \mathcal{S}_1^{(10)} \mathcal{A}^{(00)} \right) \right] f^{(0)} \\
\end{pmatrix} \qquad \\
= \frac{1}{\kn} \begin{pmatrix}
  \mathcal{L}^{(11)} &  \mathcal{L}^{(12)} &  \mathcal{L}^{(1\mathrm{r})} \\
  & \mathcal{L}^{(22)}_* &  \mathcal{L}^{(2\mathrm{r})}_* \\
  &  \mathcal{L}^{(\mathrm{r}2)}_* &  \mathcal{L}^{(\mathrm{rr})}_*
\end{pmatrix} \begin{pmatrix} f_2^{(1)} \\[2pt] f_2^{(2)} \\[2pt] f_2^{(\mathrm{r})} \end{pmatrix}.
\end{split}
\end{equation}
\item Apply Gaussian elimination again to eliminate $\mathcal{L}_*^{(\mathrm{r}2)}$. Then the last equation in the system becomes
\begin{displaymath}
\kn \mathcal{B}^{(\mathrm{r0})} f^{(0)} = \frac{1}{\kn} \left( \mathcal{L}_*^{(\mathrm{rr})} - \mathcal{L}_*^{(\mathrm{r2})} (\mathcal{L}_*^{(22)})^{-1} \mathcal{L}_*^{(\mathrm{2r})} \right) f_2^{(\mathrm{r})},
\end{displaymath}
with
\begin{displaymath}
\begin{split}
\mathcal{B}^{(\mathrm{r}0)} &= \left( \mathcal{A}^{(\mathrm{r1})} - \mathcal{L}_*^{(\mathrm{r2})} (\mathcal{L}_*^{(22)})^{-1} \mathcal{A}^{(\mathrm{21})} \right) \mathcal{S}_1^{(10)} \\
& \quad - \left( \mathcal{L}^{(\mathrm{r1})} - \mathcal{L}_*^{(\mathrm{r2})} (\mathcal{L}_*^{(22)})^{-1} \mathcal{L}^{(\mathrm{21})} \right) (\mathcal{L}^{(11)})^{-1} \left( \mathcal{A}^{(11)} \mathcal{S}_1^{(10)} - \mathcal{S}_1^{(10)} \mathcal{A}^{(00)} \right).
\end{split}
\end{displaymath}
Due to the fact that $f_2^{(\mathrm{r})} \sim o(\kn^2)$, the operator $\mathcal{B}^{(\mathrm{r}0)}$ has to be zero, and thus $f_2^{(\mathrm{r})} = 0$, and $f_2^{(1)}$ and $f_2^{(2)}$ can be solved from \eqref{eq:ge}. The results are
\begin{align*}
f_2^{(1)} 
&= \kn \mathcal{S}_1^{(10)} f^{(0)} - \kn^2 (\mathcal{L}^{(11)})^{-1} \mathcal{L}^{(12)}(\mathcal{L}_*^{(22)})^{-1} \mathcal{A}^{(11)} \mathcal{S}_1^{(10)} f^{(0)} \\
& \quad + \kn^2 (\mathcal{L}^{(11)})^{-1} \left( \mathcal{I} - \mathcal{L}^{(12)} (\mathcal{L}_*^{(22)})^{-1} \mathcal{L}^{(21)} (\mathcal{L}^{(11)})^{-1} \right) \left( \mathcal{A}^{(11)} \mathcal{S}_1^{(10)} - \mathcal{S}_1^{(10)} \mathcal{A}^{(00)} \right) f^{(0)}, \\
f_2^{(2)} &= \kn^2 (\mathcal{L}_*^{(22)})^{-1}\left[ \mathcal{A}^{(21)} \mathcal{S}_1^{(10)} - \mathcal{L}^{(21)} (\mathcal{L}^{(11)})^{-1} \left( \mathcal{A}^{(11)} \mathcal{S}_1^{(10)} - \mathcal{S}_1^{(10)} \mathcal{A}^{(00)} \right) \right] f^{(0)}.
\end{align*}
\end{enumerate}
For conciseness, below we will write these equations in the following simpler form:
\begin{displaymath}
f_2^{(1)} = \kn \mathcal{S}_1^{(10)} f^{(0)} + \kn^2 \mathcal{S}_2^{(10)} f^{(0)}, \qquad
f_2^{(2)} = \kn^2 \mathcal{S}_2^{(20)} f^{(0)}.
\end{displaymath}

\subsubsection{Third Maxwellian iteration}
Similarly, the third Maxwellian iteration requires to find $f_3^{(1)}$, $f_3^{(2)}$ and $f_3^{(3)}$ from \eqref{eq:max_iter} with $j = 2$. However, for the purpose of deriving super-Burnett equations, only the first equation is needed. Here we rewrite this equation below:
\begin{multline*}
\frac{\partial}{\partial t} \left(\kn \mathcal{S}_1^{(10)} f^{(0)} + \kn^2 \mathcal{S}_2^{(20)} f^{(0)} \right) \\
+ \mathcal{A}^{(10)} f^{(0)} + \kn \mathcal{A}^{(11)} \mathcal{S}_1^{(10)}f^{(0)} + \kn^2 \left( \mathcal{A}^{(11)} \mathcal{S}_2^{(10)} + \mathcal{A}^{(12)} \mathcal{S}_2^{(20)} \right) f^{(0)} \\
= \frac{1}{\kn} (\mathcal{L}^{(11)} f_3^{(1)} + \mathcal{L}^{(12)} f_3^{(2)} + \mathcal{L}^{(1\mathrm{r})} f_3^{(\mathrm{r})}).
\end{multline*}
Further simplification again requires the time derivatives to be replaced without affecting the order of magnitude. This can be done by using
\begin{displaymath}
\begin{split}
& \frac{\partial}{\partial t} \left(\kn \mathcal{S}_1^{(10)} f^{(0)} + \kn^2 \mathcal{S}_2^{(20)} f^{(0)} \right) = \\
& \qquad -\kn \mathcal{S}_1^{(10)} \mathcal{A}^{(00)} f^{(0)} - \kn^2 (\mathcal{S}_1^{(10)} \mathcal{A}^{(01)} \mathcal{S}_1^{(10)} + \mathcal{S}_2^{(20)} \mathcal{A}^{(00)}) f^{(0)} + O(\kn^3),
\end{split}
\end{displaymath}
which leads to
\begin{equation} \label{eq:3rd_iter}
\begin{split}
& \mathcal{L}^{(11)} f_3^{(1)} + \mathcal{L}^{(12)} f_3^{(2)} + \mathcal{L}^{(1\mathrm{r})} f_3^{(\mathrm{r})} \\
={} & \kn \mathcal{A}^{(10)} f^{(0)} + \kn^2 \left(\mathcal{A}^{(11)} \mathcal{S}_1^{(10)} - \mathcal{S}_1^{(10)} \mathcal{A}^{(00)} \right) f^{(0)} \\
& {}+ \kn^3 \left( \mathcal{A}^{(11)} \mathcal{S}_2^{(10)} + \mathcal{A}^{(12)} \mathcal{S}_2^{(20)} - \mathcal{S}_2^{(20)} \mathcal{A}^{(00)} - \mathcal{S}_1^{(10)} \mathcal{A}^{(01)} \mathcal{S}_1^{(10)} \right) f^{(0)}.
\end{split}
\end{equation}

\subsubsection{Super-Burnett equations}
By the adjointness \eqref{eq:adjoint} and the equalities \eqref{eq:op_relationship}, we have
\begin{displaymath}
\mathcal{A}^{(02)} = \mathcal{A}^{(01)} (\mathcal{L}^{(11)})^{-1} \mathcal{L}^{(12)}, \qquad
\mathcal{A}^{(0\mathrm{r})} = \mathcal{A}^{(01)} (\mathcal{L}^{(11)})^{-1} \mathcal{L}^{(1\mathrm{r})}.
\end{displaymath}
Thus, the super-Burnett equations \eqref{eq:superBurnett} can be reformulated as
\begin{displaymath}
\frac{\partial f^{(0)}}{\partial t} + \mathcal{A}^{(00)} f^{(0)} +  \mathcal{A}^{(01)} (\mathcal{L}^{(11)})^{-1} \left(\mathcal{L}^{(11)}f_3^{(1)} + \mathcal{L}^{(12)} f_3^{(2)} + \mathcal{L}^{(1\mathrm{r})} f_3^{(\mathrm{r})} \right) = 0.
\end{displaymath}
The final super-Burnett equations can be found by plugging \eqref{eq:3rd_iter} into the above equation.

Such a derivation of super-Burnett equations shows that the final equations depend only on the operators
\begin{displaymath}
\mathcal{A}^{(00)}, \mathcal{A}^{(01)}, \mathcal{A}^{(02)}, \mathcal{A}^{(10)}, \mathcal{A}^{(11)}, \mathcal{A}^{(12)}, \mathcal{A}^{(20)}, \mathcal{A}^{(21)}, \mathcal{L}^{(11)}, \mathcal{L}^{(12)}, \mathcal{L}^{(21)}, \mathcal{L}^{(22)},
\end{displaymath}
and the operators
\begin{equation} \label{eq:ops}
\mathcal{A}^{(0\mathrm{r})}, \mathcal{A}^{(1\mathrm{r})}, \mathcal{A}^{(22)}, \mathcal{A}^{(2\mathrm{r})}, \mathcal{A}^{(\mathrm{r}0)}, \mathcal{A}^{(\mathrm{r}1)}, \mathcal{A}^{(\mathrm{r}2)}, \mathcal{A}^{(\mathrm{rr})}, \mathcal{L}^{(1\mathrm{r})}, \mathcal{L}^{(2\mathrm{r})}, \mathcal{L}^{(\mathrm{r}1)}, \mathcal{L}^{(\mathrm{r}2)}, \mathcal{L}^{(\mathrm{rr})}
\end{equation}
do not appear in the final equations. Therefore, we can set these operators to be zero, and the resulting equations still have the super-Burnett order. Meanwhile, the time derivative $\partial_t f_2$ does not participate in the derivation, which can also be set to zero without affecting the super-Burnett order. 

\section{Onsager boundary conditions}
\label{sec:boundary}
In microflows, boundary conditions play a key role in describing the rarefaction effect. In this work, we focus mainly on the wall boundary condition, and we restrict ourselves to Maxwell's accommodation model in \cite{Maxwell1879bc}, where it is assumed that all gas molecules hitting the wall are reflected either specularly or diffusively, according to the \emph{accommodation coefficient} of the wall, which specifies the probability of diffusive reflection. In this section, we will follow the notations used in \eqref{eq:bcv} to \eqref{eq:bcm} and use $n$, $\tau_1$ and $\tau_2$ instead of $1$, $2$ and $3$ as indices of vectors or tensors. Thus, Maxwell's wall boundary conditions for the Boltzmann equation \eqref{eq:Boltzmann} can be formulated as
\begin{equation} \label{eq:Maxwell_bc}
f(\xi_n, \xi_{\tau_1}, \xi_{\tau_2}) = \chi f_W(\xi_n, \xi_{\tau_1}, \xi_{\tau_2}) + (1-\chi) f(-\xi_n, \xi_{\tau_1}, \xi_{\tau_2}), \qquad \xi_n < 0.
\end{equation}
Here $\chi$ is the accommodation coefficient, and we have omitted the spatial and temporal variables since the boundary condition is valid for any boundary point $\boldsymbol{x}$ and time $t$, and there are no spatial or temporal derivatives involved. The function $f_W$ describes the diffusive reflection, which depends on the wall velocity $\boldsymbol{v}_W$ and the wall temperature $\theta_W$. Under the linearized setting, it has the form
\begin{displaymath}
f_W(\boldsymbol{\xi}) = \rho_W + v_{W,\tau_1} \xi_{\tau_1} + v_{W,\tau_2} \xi_{\tau_2} + \frac{1}{2} \theta_W (|\boldsymbol{\xi}|^2 - 3),
\end{displaymath}
where 
\begin{equation}
\label{eq:rho_W}
\rho_W = \sqrt{2\pi} \langle (\xi_n)_+, f \rangle - \frac{\theta_W}{2},
\end{equation}
which is chosen such that $\langle \xi_n, f\rangle = 0$, meaning that the normal component of the velocity is zero.

In \cite{yang2024siap}, the boundary condition \eqref{eq:Maxwell_bc} has been equivalently reformulated to
\begin{equation}
\label{eq:bc}
\mathcal{P}_{\mathrm{odd}} f = \frac{2\chi}{2-\chi} \mathcal{P}_{\mathrm{odd}} \mathcal{C}(f_W - \mathcal{P}_{\mathrm{even}} f),
\end{equation}
where
\begin{align*}
\mathcal{P}_{\mathrm{odd}} g(\xi_n, \xi_{\tau_1}, \xi_{\tau_2}) &= 
  \frac{g(\xi_n, \xi_{\tau_1}, \xi_{\tau_2}) - g(-\xi_n, \xi_{\tau_1}, \xi_{\tau_2})}{2}, \\
\mathcal{P}_{\mathrm{even}} g(\xi_n, \xi_{\tau_1}, \xi_{\tau_2}) &= 
  \frac{g(\xi_n, \xi_{\tau_1}, \xi_{\tau_2}) + g(-\xi_n, \xi_{\tau_1}, \xi_{\tau_2})}{2}, \\
\mathcal{C} g(\xi_n, \xi_{\tau_1}, \xi_{\tau_2}) &= \left\{ \begin{array}{@{}ll}
  g(\xi_n, \xi_{\tau_1}, \xi_{\tau_2}) & \text{if } \xi_n < 0, \\[3pt]
  0 & \text{if } \xi_n > 0.
\end{array} \right.
\end{align*}
Such formulation of the boundary condition better fits Grad's theory of boundary conditions for moment equations \cite{Grad13_1949}, where it is required that all boundary conditions should only be imposed on odd moments. A direct application of Galerkin's method on \eqref{eq:bc} leads to Grad's boundary conditions. However, these boundary conditions may not satisfy the stability requirement \eqref{eq:stablebc}. Our boundary conditions \eqref{eq:bcv}--\eqref{eq:bcm} are based on the theory of Onsager boundary conditions developed in \cite{sarna2018stable,bunger2023KRM,yang2024siap}, which can be demonstrated to satisfy the general form \eqref{eq:r13bc}. In particular, we will follow the formulation in \cite{yang2024siap} that rewrites \eqref{eq:bc} as
\begin{equation}
\label{eq:bc_stable}
\mathcal{P}_{\mathrm{odd}} f = \frac{2\chi}{2-\chi} \mathcal{S} \xi_n (f_W - \mathcal{P}_{\mathrm{even}} f),
\end{equation}
where $\mathcal{S} = \mathcal{P}_{\mathrm{odd}} \mathcal{C} \xi_n^{-1}$. Thus, the operator $\frac{2\chi}{2-\chi} \mathcal{S}$  corresponds to the negative semidefinite matrix $\mathbf{Q}$, and $\xi_n$ corresponds to the matrix $\mathbf{A}_{\mathrm{oe}}$. More details on the discretization will be given in the following subsections.

\begin{remark}
Note that existing Onsager boundary conditions such as the ones derived in \cite{sarna2018stable} and \cite{yang2024siap} cannot be directly applied to our systems. The primary reason is that our system generally requires more boundary conditions than previous R13 equations, due to a larger number of moments considered in our derivation. For instance, the boundary conditions for R13 equations with Maxwell molecules consist of 10 equations at each boundary point \cite{sarna2018stable}. However, in our system, the equations of $\theta$ and $\vb$ become parabolic, requiring 3 more boundary conditions to determine the solution. This motivates us to derive the boundary conditions from scratch and guarantee reliable matching between the equations and boundary conditions.
\end{remark}

\subsection{Onsager boundary conditions}\label{subsec:Onsager}
The abstract form of the R13 equations  \eqref{eq:R13_abstract} shows that all operators in the Boltzmann equation are approximated by operators in $\mathbb{V} = \mathbb{V}^{(0)} \oplus \mathbb{V}^{(1)} \oplus \mathbb{V}^{(2)}$. If we rewrite the Boltzmann equation \eqref{eq:Boltzmann} as
\begin{equation}
\frac{\partial}{\partial t} (\mathcal{I} f) + \frac{\partial}{\partial x_k} (\xi_k f) = \frac{1}{\kn} \mathcal{L}f,
\end{equation}
then the identity operator $\mathcal{I}$ is approximated by $\mathcal{P}^{(01)} := \mathcal{P}^{(0)} + \mathcal{P}^{(1)}$, the velocity operator $\xi_k$ is approximated by
\begin{displaymath}
  \mathcal{A}_k = \mathcal{P}^{(01)} \xi_k \mathcal{P}^{(01)} + \mathcal{P}^{(2)} \xi_k \mathcal{P}^{(01)} + \mathcal{P}^{(01)} \xi_k \mathcal{P}^{(2)},
\end{displaymath}
and the collision operator $\mathcal{L}$ is approximated by $(\mathcal{P}^{(1)} + \mathcal{P}^{(2)}) \mathcal{L}(\mathcal{P}^{(1)} + \mathcal{P}^{(2)})$. Similarly, when deriving boundary conditions for moment equations, operators in \eqref{eq:bc_stable} are also approximated by using operators on $\mathbb{V}$.

We first study the operator $\mathcal{P}_{\mathrm{even}}$, which extracts the even part of a function with respect to $\xi_n$. The discretization of $\mathcal{P}_{\mathrm{even}}$ is simply its restriction on $\mathbb{V}$. Note that all the basis functions of $\mathbb{V}$ are given in \eqref{eq:f012}, from which it is not difficult to observe that
\begin{displaymath}
\begin{split}
& \mathbb{V}_{\mathrm{even}} := \mathcal{P}_{\mathrm{even}} \mathbb{V}
= \operatorname{span} \{ \psi^0, \psi^1, \psi_{\tau_1}^0, \psi_{\tau_2}^0; \phi_{\tau_1}^1, \phi_{\tau_2}^1, \phi_{\tau_1 \tau_1}^0, \phi_{\tau_2 \tau_2}^0, \phi_{\tau_1 \tau_2}^0; \\
& \quad \phi^2, \phi_{\tau_1}^2, \phi_{\tau_2}^2, \phi_{\tau_1}^3, \phi_{\tau_2}^3, \phi_{\tau_1 \tau_1}^1, \phi_{\tau_2 \tau_2}^1, \phi_{\tau_1 \tau_2}^1, \phi_{\tau_1 \tau_1}^2, \phi_{\tau_2 \tau_2}^2, \phi_{\tau_1 \tau_2}^2, \phi_{\tau_1 \tau_1 \tau_1}^0, \phi_{\tau_1 \tau_1 \tau_2}^0,\phi_{\tau_1 \tau_2 \tau_2}^0,\phi_{\tau_2 \tau_2 \tau_2}^0\}.
\end{split}
\end{displaymath}
It is clear that $f_W \in \mathbb{V}_{\mathrm{even}}$. Thus, the velocity operator $\xi_n$ should be discretized as the restriction of $\mathcal{A}_n$ on
$\mathbb{V}_{\mathrm{even}}$. We denote its range by $\mathbb{V}_{\mathrm{odd}}
:= \mathcal{A}_n\mathbb{V}_{\mathrm{even}}$. By straightforward calculation,
\begin{equation}
\label{eq:Vodd}
\begin{split}
\mathbb{V}_{\mathrm{odd}} = \operatorname{span}\Big\{ & \psi_n^0; \phi^1_n,\phi_{n\tau_1}^{0}, \phi_{n\tau_2}^{0}; \phi_{n\tau_1}^{1}, \phi_{n\tau_2}^{1},\phi_{n\tau_1}^{2}, \phi_{n\tau_2}^{2}, \phi_{n\tau_1 \tau_2}^{0},  \phi_{n\tau_1 \tau_1}^{0} + \frac{1}{2} \phi_{nnn}^{0}, \\
&\phi_n^{2} + \frac{c^{3,1}_1}{c^{2,1}_1} \phi_n^{3},\mu_1 \phi_{nnn}^{0} + \mu_2 \left( \phi_n^{3} - \frac{c^{3,1}_1}{c^{2,1}_1} \phi_n^{2} \right)\Big\},
\end{split}
\end{equation}
where
\begin{equation}\label{mu}
\mu_1 = 3 A_{5,11} \left[ 1 + \left( \frac{c_1^{3,1}}{c_1^{2,1}} \right)^2 \right], \qquad
\mu_2 = 2 \left( A_{58} - \frac{c_1^{3,1}}{c_1^{2,1}} A_{57} \right).
\end{equation}
Note that the basis functions listed in \eqref{eq:Vodd} are orthogonal basis functions, and the orthogonality will simplify our derivations of boundary conditions. The discretization the operator $\mathcal{S}$ is then a straightforward application of Galerkin's method with the subspace $\mathbb{V}_{\mathrm{odd}}$.

We would like to comment that $\mathbb{V}_{\mathrm{odd}} \oplus \mathbb{V}_{\mathrm{even}}$ is only a proper subspace of $\mathbb{V}$, or equivalently, $\mathbb{V}_{\mathrm{odd}} \neq \mathcal{P}_{\mathrm{odd}} \mathbb{V}$. The reason is that the operator $\mathcal{A}_n$ has a null space being a subspace of $\mathcal{P}_{\mathrm{odd}} \mathbb{V}$. This can be seen by focusing on the $x_n$ derivatives in \eqref{eqd2}\eqref{eqd3}\eqref{eqg}, and find that by the linear combination
\begin{displaymath}
A_{5,11} \left(c_1^{3,1} \times \eqref{eqd2} - c_1^{2,1} \times \eqref{eqd3} \right) - \frac{5}{3} (c_1^{3,1} A_{57} - c_1^{2,1} A_{58}) \times \eqref{eqg},
\end{displaymath}
the $x_n$ derivative will be cancelled out. This indicates
\begin{displaymath}
\mathcal{A}_n \left(
  3A_{5,11} \left(c_1^{3,1} \phi_n^2 - c_1^{2,1} \phi_n^3 \right) - \frac{175}{6} (c_1^{3,1} A_{57} - c_1^{2,1} A_{58}) \phi_{nnn}^0
\right) = 0,
\end{displaymath}
which reveals the null space of $\mathcal{A}_n$. The function space $\mathbb{V}_{\mathrm{odd}}$ defined in \eqref{eq:Vodd} is orthogonal to the null space of $\mathcal{A}_n$.

Practically, all the boundary conditions are written as
\begin{equation} \label{eq:bc_odd}
\langle \phi_{\mathrm{odd}}^i, f_{\mathrm{odd}} \rangle =
  \frac{2\chi}{2-\chi} \sum_j \frac{\langle \phi_{\mathrm{odd}}^i, \mathcal{C} \xi_n^{-1} \phi_{\mathrm{odd}}^j \rangle \langle \phi_{\mathrm{odd}}^j, \mathcal{A}_n(f_W - f_{\mathrm{even}}) \rangle}{\langle \phi_{\mathrm{odd}}^j, \phi_{\mathrm{odd}}^j \rangle}, \qquad j = 1,\cdots, 12.
\end{equation}
where $\phi_{\mathrm{odd}}^j$ refers to the $j$th basis function of $\mathbb{V}_{\mathrm{odd}}$ (see \eqref{eq:Vodd}), and
$f_{\mathrm{odd}}$ and $f_{\mathrm{even}}$ are, respectively, the projection of $f$ onto the function space $\mathbb{V}_{\mathrm{odd}}$ and $\mathbb{V}_{\mathrm{even}}$. Note that this formula requires the orthogonality of the basis $\{\phi_{\mathrm{odd}}^j\}$. Calculation of the integrals in \eqref{eq:bc_odd} requires specifying the collision model. Here we only provide the general form:
\begin{equation}
\label{eq:bc_raw}
\begin{pmatrix}
  v_n \\ \bar{q}_n \\ u_n^2 + \frac{c_1^{3,1}}{c_1^{2,1}} u_n^3 \\
  \bar{\sigma}_{\tau_1 n} \\ \bar{\sigma}_{\tau_2 n} \\
  u_{\tau_1 n}^1 \\ u_{\tau_2 n}^1 \\
  u_{\tau_1 n}^2 \\ u_{\tau_2 n}^2 \\
  u_{nnn}^0 + \frac{\mu_2}{\mu_1} \left( u_n^3 - \frac{c_1^{3,1}}{c_1^{2,1}} u_n^2 \right) \\
  u_{\tau_1 \tau_1 n}^0 + \frac{1}{2} u_{nnn}^0 \\ u_{\tau_1 \tau_2 n}^0
\end{pmatrix} = 
\frac{2\chi}{2-\chi} Z \begin{pmatrix}
  V_n \\ \bar{Q}_n \\ U_n^2 + \frac{c_1^{3,1}}{c_1^{2,1}} U_n^3 \\
  \bar{\Sigma}_{\tau_1 n} \\ \bar{\Sigma}_{\tau_2 n} \\
  U_{\tau_1 n}^1 \\ U_{\tau_2 n}^1 \\
  U_{\tau_1 n}^2 \\ U_{\tau_2 n}^2 \\
  U_{nnn}^0 + \frac{\mu_2}{\mu_1} \left( U_n^3 - \frac{c_1^{3,1}}{c_1^{2,1}} U_n^2 \right) \\
  U_{\tau_1 \tau_1 n}^0 + \frac{1}{2} U_{nnn}^0 \\ U_{\tau_1 \tau_2 n}^0
\end{pmatrix},
\end{equation}
where the capitalized variables are given by
\begin{equation} \label{eq:Aoe_moments}
 \begin{aligned}
    V_n = &\ (\rho^W - \rho) + (\theta^W - \theta) - c_2^{0,0} \bar{\sigma}_{nn} - \sqrt{15} (c_2^{1,0} u_{nn}^1 + c_2^{2,0} u_{nn}^2), \\
    \bar{Q}_{n} = &\ \frac{5}{2} c^{1,1}_1(\theta^W - \theta) + \frac{\sqrt{2}}{6}A_{45}\bar{\sigma}_{nn}+\sqrt{\frac{5}{6}} (A_{46}u^{2}+A_{49}u^{1}_{nn} + A_{4,10}u^{2}_{nn}),\\
    U^{2}_{n} = &\  \frac{\sqrt{15}}{3}\left(\frac{\sqrt{2}}{2} c^{2,1}_1 (\theta - \theta^W) - \frac{1}{15} A_{57}\bar{\sigma}_{nn} \right),\\
    U^{3}_{n} = &\  \frac{\sqrt{15}}{3}\left(\frac{\sqrt{2}}{2} c^{3,1}_1 (\theta - \theta^W) - \frac{1}{15} A_{58}\bar{\sigma}_{nn} \right),\\
    \bar{\Sigma}_{\tau_in} = &\ c^{0,0}_2 (v^W_{\tau_i}-v_{\tau_i}) + \frac{\sqrt{2}}{15} A_{45} \bar{q}_{\tau_i} - \frac{1}{\sqrt{15}} (A_{57}u^{2}_{\tau_i}  + A_{58}u^{3}_{\tau_i} + 2 A_{5,11} u^{0}_{\tau_inn}), \quad i=1,2, \\
    U^{1}_{\tau_in} = &\  \frac{\sqrt{15}}{15} \left( c^{1,0}_2 (v^W_{\tau_i}-v_{\tau_i})  + \frac{\sqrt{2}}{15} A_{49}\bar{q}_{\tau_i}  \right),\quad i=1,2,\\
    U^{2}_{\tau_in} = &\  \frac{\sqrt{15}}{15} \left( c^{2,0}_2 (v^W_{\tau_i}-v_{\tau_i})  + \frac{\sqrt{2}}{15} A_{4,10}\bar{q}_{\tau_i}  \right),\quad i=1,2,\\
    U^{0}_{\tau_i\tau_jn} = &\ \frac{2\sqrt{15}}{1575} A_{5,11}\left( \frac{2}{5} \delta_{ij} \bar{\sigma}_{nn}  -  \bar{\sigma}_{\tau_i\tau_j}\right), \quad i,j=1,2, \\
    U^{0}_{nnn} = &\ {-\frac{2\sqrt{15}}{875}}A_{5,11}\bar{\sigma}_{nn}.
    \end{aligned}
\end{equation}
and the structure of the matrix $Z$ is (``$*$'' means a nonzero entry)
\begin{displaymath}
Z = \left( \begin{array}{@{}cccccccccccc@{}}
  * & * & * & 0 & 0 & 0 & 0 & 0 & 0 & * & * & 0 \\
  * & * & * & 0 & 0 & 0 & 0 & 0 & 0 & * & * & 0 \\
  * & * & * & 0 & 0 & 0 & 0 & 0 & 0 & * & * & 0 \\
  0 & 0 & 0 & * & 0 & * & 0 & * & 0 & 0 & 0 & 0 \\
  0 & 0 & 0 & 0 & * & 0 & * & 0 & * & 0 & 0 & 0 \\
  0 & 0 & 0 & * & 0 & * & 0 & * & 0 & 0 & 0 & 0 \\
  0 & 0 & 0 & 0 & * & 0 & * & 0 & * & 0 & 0 & 0 \\
  0 & 0 & 0 & * & 0 & * & 0 & * & 0 & 0 & 0 & 0 \\
  0 & 0 & 0 & 0 & * & 0 & * & 0 & * & 0 & 0 & 0 \\
  * & * & * & 0 & 0 & 0 & 0 & 0 & 0 & * & * & 0 \\
  * & * & * & 0 & 0 & 0 & 0 & 0 & 0 & * & * & 0 \\
  0 & 0 & 0 & 0 & 0 & 0 & 0 & 0 & 0 & 0 & 0 & *
\end{array} \right) = \begin{pmatrix}
  z_0 & \zeta_0^{\intercal} \\ \zeta_1 & Z_1
\end{pmatrix}.
\end{displaymath}
The matrix $Z$ is the discrete form of the operator $\mathcal{S} = \mathcal{P}_{\mathrm{odd}} \mathcal{C} \xi_n^{-1}$, and is therefore similar to a symmetric and positive definite matrix. To complete the derivation, we still need to take into account the condition $v_n = \langle \xi_n, f \rangle = 0$. Since $V_n$ depends on $\rho_W$ (see \eqref{eq:rho_W}), which has to be determined by this condition. Instead of solving $\rho_W$, an equivalent way is to solve $V_n$ from the first equation of \eqref{eq:bc_raw}, and then plug the result into other equations. Such an equivalence is due to the fact that $\rho^W$ appears only in $V_n$ in all the quantities defined in \eqref{eq:Aoe_moments}. This leads to boundary conditions that have the same form as \eqref{eq:bc_raw}, but the matrix $Z$ is replaced by
\begin{displaymath}
  \begin{pmatrix} 0 & 0 \\ 0 & Z_1 - z_0^{-1} \zeta_1 \zeta_0^{\intercal} \end{pmatrix}.
\end{displaymath}
Since the Schur complement of a symmetric and positive definite matrix is still symmetric and positive definite, we can guarantee the the above matrix can still be viewed as a positive semidefinite operator.

Such boundary conditions satisfy all the properties required by the $L^2$ stability. However, spurious boundary layers are observed in the solution, which will be detailed in the next subsection.

\subsection{One-dimensional problems and spurious boundary layers} \label{sec:boundary_layer}
To show the boundary layers in the solution, we consider the steady-state one-dimensional flow in which
\begin{displaymath}
    v_2 = v_3  = \bar{q}_2 = \bar{q}_3 = \bar{\sigma}_{13} = \bar{\sigma}_{23}= 0, \qquad \bar{\sigma}_{22} = \bar{\sigma}_{33}.
\end{displaymath}
These conditions come from the assumption that the distribution function satisfies $f(\boldsymbol{x}, \xi_1, \xi_2, \xi_3, t) = \tilde{f}(\boldsymbol{x}, \xi_1, \sqrt{\xi_2^2 + \xi_3^2}, t)$ for every $\boldsymbol{x}$ and $t$, meaning that $f$ is axisymmetric about the axis $\xi_2 = \xi_3 = 0$. Under this assumption, only five variables remains and the equations have the form
\begin{align}
& \frac{\partial v}{\partial x} = 0, \label{eq:1deqrho} \\
& \frac{2}{3} \frac{\partial v}{\partial x}  +\frac{2}{3}k_0\frac{\partial \bar q}{\partial x}-k_1 \kn \frac{\partial^2 \theta}{\partial x^2}+k_2\kn\frac{\partial^2\bar{\sigma}}{\partial x^2} = 0, \label{eq:1deqv}\\ 
& \frac{\partial \rho}{\partial x} +\frac{\partial \theta}{\partial x} - \frac{2}{3} k_3 \kn \frac{\partial^2 v}{\partial x^2} - \frac{2}{3} k_4 \kn \frac{\partial^2\bar{q}}{\partial x^2}+ k_5 \frac{\partial \bar{\sigma}}{\partial x}  = 0, \label{eq:1deqtheta} \\ 
& \frac{5}{2} k_0\frac{\partial \theta}{\partial x} -\frac{5}{3} k_4\kn\frac{\partial^2v}{\partial x^2} -2 k_6 \kn\frac{\partial^2 \bar{q}}{\partial x^2}-\frac{8}{5}k_7\kn\frac{\partial^2 \bar{q}}{\partial x^2}+k_8\frac{\partial\bar{\sigma}}{\partial x}  = - \frac{2}{3} \frac{1}{\kn}l_1\bar{q}, \label{eq:1deqq} \\
& 2k_2\kn\frac{\partial^2\theta}{\partial x^2}+ \frac{4}{3} k_5 \frac{\partial v}{\partial x} +\frac{8}{15}k_8\frac{\partial \bar{q}}{\partial x} - \frac{6}{5} k_9\kn\frac{\partial^2 \bar{\sigma}}{\partial x^2} -\frac{2}{3} k_{10}\kn\frac{\partial^2 \bar{\sigma}}{\partial x^2}= - \frac{1}{\kn}l_2\bar{\sigma}. \label{eq:1deqsigma}
\end{align}
Here time derivatives have been removed so that the solution of the equations correspond to the steady state of the fluid.
In the one-dimensional case, the boundary conditions are reduced to
\begin{align}
    & v = 0, \label{eq:1dbcv} \\
    & \bar{q}  = s_n \tilde{\chi} \left[ \hat{m}_{11} (\theta -\theta^{W}) +\hat{m}_{12} \bar{\sigma} -  \hat{m}_{13} \kn \frac{\partial \bar{q}}{\partial x} -  \frac{2}{3} \hat{m}_{14} \kn \frac{\partial \bar{q}}{\partial x}  - \frac{2}{3} \hat{m}_{15} \kn \frac{\partial v}{\partial x} \right], \label{eq:1dbcq} \\ 
    & \hat{m}_{26}\bar{q} + \hat{m}_{27} \kn \frac{\partial \theta}{\partial x} - \hat{m}_{28} \kn \frac{\partial \bar{\sigma}}{\partial x}   \notag\\
     & \quad =s_n \tilde{\chi} \left[-\hat{m}_{21} (\theta -\theta^{W}) + \hat{m}_{22} \bar{\sigma} +  \hat{m}_{23} \kn \frac{\partial \bar{q}}{\partial x} + \frac{2}{3} \hat{m}_{24} \kn \frac{\partial \bar{q}}{\partial x}  + \frac{2}{3} \hat{m}_{25} \kn \frac{\partial v}{\partial x} \right], \label{eq:1dbctheta} \\ 
  & {-\hat{m}_{66}} \bar{q} - \kn\left(\frac{3}{5} \hat{m}_{67} \frac{\partial \bar{\sigma}}{\partial x} +\hat{m}_{68}\frac{\partial \bar{\sigma}}{\partial x} + \hat{m}_{69}\frac{\partial \theta}{\partial x} \right) \notag\\
  & \quad = s_n \tilde{\chi} \left[-\hat{m}_{61} (\theta -\theta^{W}) +\hat{m}_{62} \bar{\sigma} + \hat{m}_{63} \kn \frac{\partial \bar{q}}{\partial x} + \frac{2}{3} \hat{m}_{64} \kn \frac{\partial \bar{q}}{\partial x} + \frac{2}{3} \hat{m}_{65} \kn \frac{\partial v}{\partial x}\right], \label{eq:1dbcsigma}
\end{align}
where $s_n=1$ at the right boundary and $s_n=-1$ at the left one. The coefficient $\hat{m}_{ij}$ is computed from the Onsager boundary conditions \eqref{eq:bc_raw}, which is not equal to the $m_{ij}$ in our final boundary conditions as shown in \eqref{eq:bcv}--\eqref{eq:bcm}.

The one-dimensional equations \eqref{eq:1deqrho}--\eqref{eq:1dbcsigma} can be solved analytically. First, it is straightforward to get $v(x)=0$ from \eqref{eq:1deqrho} and \eqref{eq:1dbcv}. Second, by noticing variable $\rho$ only appears in \eqref{eq:1deqtheta}, we can solve other equations and then plug other variables in \eqref{eq:1deqtheta}. Third, \eqref{eq:1deqv} 
reveals that $\frac{\partial \bar{q}}{\partial x} = \frac{3}{2 k_0} \kn ( k_1 \frac{\partial^2 \theta}{\partial x^2} - k_2 \frac{\partial^2 \bar{\sigma}}{\partial x^2} )$, which indicates 
\begin{equation}\label{eq:1dsolq}
\bar{q}(x) = \frac{3}{2 k_0} \kn \left( k_1 \frac{\partial \theta}{\partial x} - k_2 \frac{\partial \bar{\sigma}}{\partial x} \right) + C_{\bar{q}}    
\end{equation}
with the constant $C_{\bar{q}}$ to be determined. Plugging $\bar{q}(x)$ into \eqref{eq:1deqq} and integrating the equation, we find that
\begin{equation}\label{eq:2eqq}
    \kn^2 \left( -k_6-\frac{4}{5} k_7 \right) \frac{3}{k_0} \left( k_1 \frac{\partial^2 \theta}{\partial x^2} - k_2 \frac{\partial^2 \bar{\sigma}}{\partial x^2} \right) + \frac{5}{2} k_0 \theta + k_8 \bar{\sigma} = - \frac{l_1}{k_0} \left( k_1 \theta - k_2 \bar{\sigma}  \right) + C_{\bar{q}}^1 x + C_{\bar{q}}^2, 
\end{equation}
where $C_{\bar{q}}^1 = -\frac{2 l_1}{3 \kn} C_{\bar{q}}$ and $C_{\bar{q}}^2$ are two constants to be determined. Similarly, plugging \eqref{eq:1dsolq} into \eqref{eq:1deqsigma} yields
\begin{equation}\label{eq:2eqsigma}
2 k_2\kn\frac{\partial^2\theta}{\partial x^2} + \frac{4 k_8 }{5 k_0} \kn \left( k_1 \frac{\partial^2 \theta}{\partial x^2} - k_2 \frac{\partial^2 \bar{\sigma}}{\partial x^2} \right) - \left( \frac{6}{5} k_9 + \frac{2}{3} k_{10} \right) \kn\frac{\partial^2 \bar{\sigma}}{\partial x^2}= - \frac{1}{\kn}l_2\bar{\sigma}.
\end{equation}
We can then define
\begin{displaymath}
    \xi = \kn \frac{\partial \theta}{\partial x}, \qquad \xi' = \kn \frac{\partial \bar{\sigma}}{\partial x},
\end{displaymath}
and reformulate \eqref{eq:2eqq} and \eqref{eq:2eqsigma} as
\begin{align*}
    \kn
    \begin{pmatrix}
        0 & 0 & 1 & 0 \\
        0 & 0 & 0 & 1 \\
        - \left( k_6 + \frac{4}{5} k_7 \right) \frac{3 k_1}{k_0} & \left( k_6 + \frac{4}{5} k_7 \right) \frac{3 k_2}{k_0} & 0 & 0 \\
        2 k_2 + \frac{4 k_8 k_1}{5 k_0} & - \left( \frac{6}{5} k_9 + \frac{2}{3} k_{10} + \frac{4 k_8 k_2}{5 k_0} \right) & 0 & 0 \\
    \end{pmatrix}
    \frac{\partial}{\partial x}
    \begin{pmatrix}
        \xi \\
        \xi' \\
        \theta \\
        \bar{\sigma}
    \end{pmatrix} \\
    +
    \begin{pmatrix}
        -1 & 0 & 0 & 0 \\    
        0 & -1 & 0 & 0 \\
        0 & 0 & \frac{5}{2}k_0 + \frac{l_1 k_1}{k_0} & k_8 - \frac{l_1 k_2}{k_0} \\    
        0 & 0 & 0 & l_2 \\    
    \end{pmatrix}
    \begin{pmatrix}
        \xi \\
        \xi' \\
        \theta \\
        \bar{\sigma}
    \end{pmatrix} 
    = 
    \begin{pmatrix}
        0 \\
        0 \\
        C_{\bar{q}}^1 x + C_{\bar{q}}^2 \\
        0
    \end{pmatrix}.
\end{align*}
By multiplying the inverse of the matrix in front of the first-order derivatives, the above equation has the following structure
\begin{equation}\label{eq:1orderode}
    \kn \frac{\partial}{\partial x}
    \begin{pmatrix}
        \xi \\
        \xi' \\
        \theta \\
        \bar{\sigma}
    \end{pmatrix}
    +
    \begin{pmatrix}
        0 & 0 & b_{11} & b_{12} \\    
        0 & 0 & b_{21} & b_{22} \\    
        -1 & 0 & 0 & 0 \\    
        0 & -1 & 0 & 0     
    \end{pmatrix}
    \begin{pmatrix}
        \xi \\
        \xi' \\
        \theta \\
        \bar{\sigma}
    \end{pmatrix} 
    = 
    \begin{pmatrix}
        c_1 ( C_{\bar{q}}^1 x + C_{\bar{q}}^2 ) \\
        c_2 ( C_{\bar{q}}^1 x + C_{\bar{q}}^2 ) \\
        0 \\
        0
    \end{pmatrix}.
\end{equation}
In general, given the following ODE system
\begin{displaymath}
    \kn \frac{\partial g}{\partial x} + A g = r(x),
\end{displaymath}
its solution is 
\begin{displaymath}
    g(x) = e^{-A x / \kn} g(0) + \frac{1}{\kn} \int_0^x e^{A(s-x)/\kn} r(s) \mathrm{d}s.
\end{displaymath}
In our case, the matrix in \eqref{eq:1orderode} has four distinct eigenvalues $\pm \lambda_1$ and $\pm \lambda_2$ with
\begin{equation}\label{eq:nonmaxeig}
    \lambda_j = \frac{\sqrt{(-1)^j\sqrt{b_{11}^2-2 b_{22} b_{11}+b_{22}^2+4 b_{12} b_{21}}-b_{11}-b_{22}}}{\sqrt{2}}, \qquad j=1,2.
\end{equation}
The solution of \eqref{eq:1orderode} therefore has the form
\begin{equation}\label{eq:1dsolform}
    \kappa_1 \sinh \left( \frac{\lambda_1 x}{\kn} \right) + \kappa_2 \cosh \left( \frac{\lambda_1 x}{\kn} \right) + \kappa_3 \sinh \left( \frac{\lambda_2 x}{\kn} \right) + \kappa_4 \cosh \left( \frac{\lambda_2 x}{\kn} \right) + \kappa_5 x + \kappa_6,
\end{equation}
where the hyperbolic sines and cosines denote the Knudsen layers. Since $\lambda_1 \neq \lambda_2$, two boundary layers will be present in the general solution. In particular, when the magnitude of eigenvalues is large or the Knudsen number is small, the solution would exhibit boundary layers.

For Maxwell molecules ($\eta = 5$), the above procedure of solving one-dimensional equations can be further simplified from the third step \eqref{eq:1dsolq}. By noticing $k_1 = k_2 = 0$, one can get $\frac{\partial \bar{q}}{\partial x} = 0$ and therefore \eqref{eq:1dsolq} becomes $\bar{q}(x) = C_{\bar{q}}$. The equation \eqref{eq:2eqsigma} becomes a second-order ODE for $\bar{\sigma}$, so that the general solution of $\bar{\sigma}(x)$ contains only one boundary layer. Furthermore, \eqref{eq:2eqq} is in absence of second-order derivatives, and thus
\begin{displaymath}
\theta(x) = \frac{2}{5k_0} \left(C_{\bar{q}}^1 x + C_{\bar{q}}^2 - k_8 \bar{\sigma}(x) \right),
\end{displaymath}
showing that the boundary layer of $\theta$ has the same thickness as that of $\bar{\sigma}$. In general, when the type of molecules gets closer to Maxwell molecules, the second boundary layer will get thinner, which explains why there is only one boundary layer left for Maxwell molecules.

The thinness of the second boundary layer also indicates the largeness of the corresponding eigenvalue $\lambda_2$. To illustrate the magnitude of eigenvalues, we compute them according to \eqref{eq:nonmaxeig} for various inverse-power-law models and summarize the results in table \ref{tab:1deig}. For Maxwell molecules, only $\lambda_1$ is provided. The table clearly shows that all models share one similar eigenvalue $\lambda_1$ and each of the non-Maxwell cases has an additional larger eigenvalue $\lambda_2$. This eigenvalue $\lambda_2$ approaches infinity when $\eta$ tends to $5$.

The one-dimensional solutions on the domain $(-0.5, 0.5)$, with Knudsen number $\kn=0.2$ and accommodation coefficients $\chi=1$, are depicted in figure \ref{fig:layers}. The figure verifies that the larger eigenvalue $\lambda_2$ in non-Maxwell cases induces additional boundary layers compared to the Maxwell solutions. These boundary layers are non-physical according to previous results for the same problem as seen in \cite{Hu2020, yang2024siap}.
In the next subsection, we will propose a revision to the Onsager boundary conditions derived in section \ref{subsec:Onsager} to remove the boundary layers, leading to our final boundary conditions \eqref{eq:bcv}--\eqref{eq:bcm}.


\begin{table}
\centering \footnotesize
\begin{displaymath}
\begin{array}{| c | c | c |}
\hline
\hline
\eta &  \lambda_{1}   &  \lambda_{2} \\
5 & 9.1287 \times 10^{-1} & - \\
7 & 9.2648\times 10^{-1} & 1.2696\times 10^{1} \\
10 & 9.2989\times 10^{-1} & 7.6747 \\ 
17 & 9.2904\times 10^{-1} & 5.6909 \\
\infty & 9.2248\times 10^{-1} & 4.2387 \\
\hline
\hline
\end{array}
\end{displaymath}
\caption{Eigenvalues in the general solution \eqref{eq:1dsolform} for some inverse-power-law models with power index $\eta$.\label{tab:1deig}} 
\end{table}

\begin{figure}
    \centering
    \includegraphics[width=0.4\textwidth]{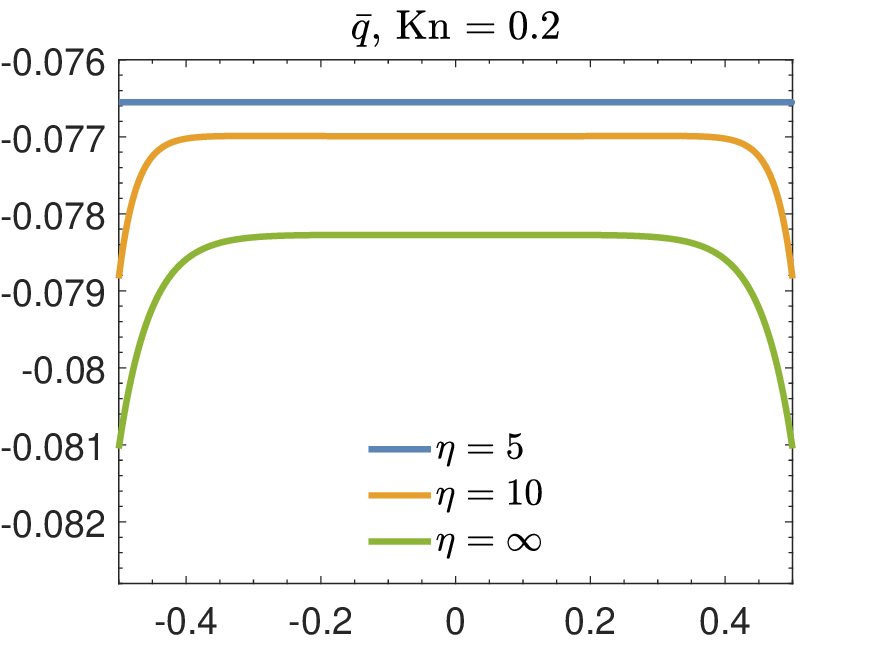}
    \hspace{0.05\textwidth}
    \includegraphics[width=0.4\textwidth]{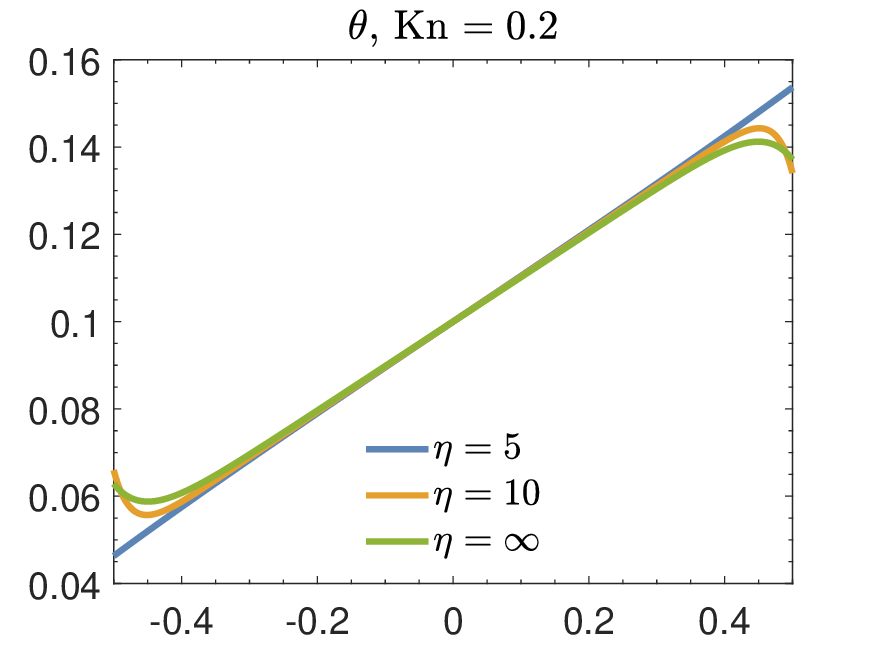}
    \caption{$\bar{q}$ and $\theta$ of the one-dimensional problem \eqref{eq:1deqrho}--\eqref{eq:1dbcsigma}. $\theta^W=0$ in the left boundary and $\theta^{W}=0.2$ in the right one.}
    \label{fig:layers}
\end{figure}

\subsection{Removal of undesired boundary layers}\label{sec:removelayer}
Our discussion will be based on the following general form of one-dimensional symmetric hyperbolic equations:
\begin{equation}\label{eq:generic}
\begin{pmatrix}
  0 & \mathbf{A}_{\mathrm{oe}} \\
  \mathbf{A}_{\mathrm{eo}} & 0
\end{pmatrix} \frac{\partial}{\partial x} \begin{pmatrix}
  \boldsymbol{u}_{\mathrm{o}} \\ \boldsymbol{u}_{\mathrm{e}}
\end{pmatrix} = \begin{pmatrix}
  \mathbf{L}_{\mathrm{oo}} & 0 \\
  0 & \mathbf{L}_{\mathrm{ee}}
\end{pmatrix} \begin{pmatrix}
  \boldsymbol{u}_{\mathrm{o}} \\ \boldsymbol{u}_{\mathrm{e}}
\end{pmatrix},
\end{equation}
where $\mathbf{A}_{\mathrm{eo}} = \mathbf{A}_{\mathrm{oe}}^{\intercal}$,  and $\mathbf{L}_{\mathrm{oo}}$ and $\mathbf{L}_{\mathrm{ee}}$ are symmetric and negative semidefinite matrices. As noted in section \ref{subsec:Onsager}, the operator $\mathcal{A}_n$ has a non-trivial null space being a subspace of $\mathcal{P}_{\mathrm{odd}} \mathbb{V}$. Accordingly, in order to mimic this property, we assume that the matrix $\mathbf{A}_{\mathrm{oe}}$ is rank-deficient,
and its singular value decomposition is
\begin{displaymath}
\mathbf{A}_{\mathrm{oe}} = 
\begin{pmatrix}
  \mathbf{U}_1 & \mathbf{U}_2
\end{pmatrix}
\begin{pmatrix}
  \boldsymbol{\Lambda}_r & 0 \\
  0 & 0
\end{pmatrix}
\begin{pmatrix}
  \mathbf{V}_1^{\intercal} \\ \mathbf{V}_2^{\intercal}
\end{pmatrix},
\end{displaymath}
where $r$ denotes the rank of $\mathbf{A}_{\mathrm{oe}}$ and
\begin{displaymath}
\boldsymbol{\Lambda}_r = \begin{pmatrix}
        \lambda_1 & 0 & \cdots & 0 \\
        0 & \lambda_2 & \cdots & 0 \\
        \vdots & \vdots & \ddots & \vdots \\
        0 & 0 & \cdots & \lambda_r
    \end{pmatrix},
    \qquad \lambda_1 \geqslant \lambda_2 \geqslant ... \geqslant \lambda_r > 0.
\end{displaymath}
Inspired by the preliminary section in \cite{Jiang2013}, we define
\begin{displaymath}
\boldsymbol{v}_{\mathrm{o}} = \mathbf{U}_1^{\intercal} \boldsymbol{u}_{\mathrm{o}}, \quad
\boldsymbol{v}_{\mathrm{e}} = \mathbf{V}_1^{\intercal} \boldsymbol{u}_{\mathrm{e}}.
\end{displaymath}
By straightforward calculation, it can be shown that $\boldsymbol{v}_{\mathrm{o}}$
and $\boldsymbol{v}_{\mathrm{e}}$ satisfy the following differential equations:
\begin{equation}\label{eq:vrhsmat}
\frac{\partial}{\partial x}
\begin{pmatrix}
    \boldsymbol{v}_{\mathrm{o}} \\
    \boldsymbol{v}_{\mathrm{e}}
\end{pmatrix} =
\begin{pmatrix}
  0 & \boldsymbol{\Lambda}_r^{-1} \tilde{\mathbf{L}}_{\mathrm{ee}} \\
  \boldsymbol{\Lambda}_r^{-1} \tilde{\mathbf{L}}_{\mathrm{oo}} &  0
\end{pmatrix}
\begin{pmatrix}
    \boldsymbol{v}_{\mathrm{o}} \\
    \boldsymbol{v}_{\mathrm{e}}
\end{pmatrix},
\end{equation}
where
\begin{align*}
  \tilde{\mathbf{L}}_{\mathrm{oo}} &= \mathbf{U}_1^{\intercal} \mathbf{L}_{\mathrm{oo}}\mathbf{U}_1 - \mathbf{U}_1^{\intercal} \mathbf{L}_{\mathrm{oo}}\mathbf{U}_2 (\mathbf{U}_2^{\intercal} \mathbf{L}_{\mathrm{oo}}\mathbf{U}_2)^{-1}\mathbf{U}_2^{\intercal} \mathbf{L}_{\mathrm{oo}}\mathbf{U}_1, \\
  \tilde{\mathbf{L}}_{\mathrm{ee}} &= \mathbf{V}_1^{\intercal} \mathbf{L}_{\mathrm{ee}}\mathbf{V}_1 - \mathbf{V}_1^{\intercal} \mathbf{L}_{\mathrm{ee}}\mathbf{V}_2 (\mathbf{V}_2^{\intercal} \mathbf{L}_{\mathrm{ee}}\mathbf{V}_2)^{-1}\mathbf{V}_2^{\intercal} \mathbf{L}_{\mathrm{ee}}\mathbf{V}_1.
\end{align*}
It is obvious that $\tilde{\mathbf{L}}_{\mathrm{oo}}$ and $\tilde{\mathbf{L}}_{\mathrm{ee}}$ are Schur complements of negative semidefinite matrices, and therefore are also negative semidefinite matrices. Note that here we require the invertibility of $\mathbf{U}_2^{\intercal} \mathbf{L}_{\mathrm{oo}}\mathbf{U}_2$ and $\mathbf{V}_2^{\intercal} \mathbf{L}_{\mathrm{ee}}\mathbf{V}_2$, which is satisfied in the R13 equations derived in previous sections.

The following lemma reveals the eigen-pairs of the matrix in \eqref{eq:vrhsmat}.
\begin{lemma}\label{lemma:eig}
    If $\{ \lambda, (\boldsymbol{r}_{\mathrm{o}}^{\intercal},\boldsymbol{r}_{\mathrm{e}}^{\intercal})^{\intercal} \}$ is an eigen-pair of the matrix in \eqref{eq:vrhsmat}, then $\{ -\lambda, (-\boldsymbol{r}_{\mathrm{o}}^{\intercal},\boldsymbol{r}_{\mathrm{e}}^{\intercal})^{\intercal} \}$ is also an eigen-pair of it. 
\end{lemma}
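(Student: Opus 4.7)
The plan is to exploit the block anti-diagonal structure of the matrix in \eqref{eq:vrhsmat}. Denote this matrix by $\mathbf{M}$ and introduce the reflection matrix
\begin{displaymath}
\mathbf{J} = \begin{pmatrix} -\mathbf{I} & 0 \\ 0 & \mathbf{I} \end{pmatrix},
\end{displaymath}
where the block sizes match those of $\boldsymbol{v}_{\mathrm{o}}$ and $\boldsymbol{v}_{\mathrm{e}}$. Since $\mathbf{M}$ has zero diagonal blocks, a direct multiplication shows $\mathbf{J}\mathbf{M}\mathbf{J} = -\mathbf{M}$, or equivalently $\mathbf{M}\mathbf{J} = -\mathbf{J}\mathbf{M}$. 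This anti-commutation is the only structural fact needed.

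The lemma then follows in one line: if $\mathbf{M}(\boldsymbol{r}_{\mathrm{o}}^{\intercal}, \boldsymbol{r}_{\mathrm{e}}^{\intercal})^{\intercal} = \lambda (\boldsymbol{r}_{\mathrm{o}}^{\intercal}, \boldsymbol{r}_{\mathrm{e}}^{\intercal})^{\intercal}$, then applying $\mathbf{J}$ from the left and inserting $\mathbf{J}^2 = \mathbf{I}$ gives
\begin{displaymath}
\mathbf{M}\bigl(\mathbf{J}(\boldsymbol{r}_{\mathrm{o}}^{\intercal}, \boldsymbol{r}_{\mathrm{e}}^{\intercal})^{\intercal}\bigr) = -\mathbf{J}\mathbf{M}(\boldsymbol{r}_{\mathrm{o}}^{\intercal}, \boldsymbol{r}_{\mathrm{e}}^{\intercal})^{\intercal} = -\lambda \mathbf{J}(\boldsymbol{r}_{\mathrm{o}}^{\intercal}, \boldsymbol{r}_{\mathrm{e}}^{\intercal})^{\intercal},
\end{displaymath}
and $\mathbf{J}(\boldsymbol{r}_{\mathrm{o}}^{\intercal}, \boldsymbol{r}_{\mathrm{e}}^{\intercal})^{\intercal} = (-\boldsymbol{r}_{\mathrm{o}}^{\intercal}, \boldsymbol{r}_{\mathrm{e}}^{\intercal})^{\intercal}$, which is exactly the claimed eigen-pair.

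There is essentially no obstacle here; the entire content of the lemma is the symmetry $\mathbf{M}\mathbf{J} = -\mathbf{J}\mathbf{M}$ arising from the zero diagonal blocks, and the rest is a trivial substitution. The only thing worth emphasizing in the write-up is that this symmetry does not depend on any property of $\tilde{\mathbf{L}}_{\mathrm{oo}}$, $\tilde{\mathbf{L}}_{\mathrm{ee}}$ or $\boldsymbol{\Lambda}_r$ beyond the block structure, so the conclusion holds regardless of whether those blocks are invertible or negative semidefinite. Consequently, the eigenvalues of $\mathbf{M}$ come in pairs $\pm\lambda$, which is precisely what is needed to justify the $\sinh/\cosh$ form of the general solution \eqref{eq:1dsolform} used earlier in the analysis of boundary layers.
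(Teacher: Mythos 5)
Your proof is correct and is essentially the same argument as the paper's: the paper writes out the two block relations $\boldsymbol{\Lambda}_r^{-1}\tilde{\mathbf{L}}_{\mathrm{ee}}\boldsymbol{r}_{\mathrm{e}}=\lambda\boldsymbol{r}_{\mathrm{o}}$ and $\boldsymbol{\Lambda}_r^{-1}\tilde{\mathbf{L}}_{\mathrm{oo}}\boldsymbol{r}_{\mathrm{o}}=\lambda\boldsymbol{r}_{\mathrm{e}}$ and verifies the new eigen-pair directly, which is exactly the computation you package via the anti-commuting involution $\mathbf{J}$. Your phrasing is a clean, equivalent repackaging that relies on the same structural fact (zero diagonal blocks), so there is nothing to add.
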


\begin{proof}
    From
    \begin{equation*}
        \begin{pmatrix}
        0 & \boldsymbol{\Lambda}_r^{-1} \tilde{\mathbf{L}}_{\mathrm{ee}} \\
        \boldsymbol{\Lambda}_r^{-1}\tilde{\mathbf{L}}_{\mathrm{oo}} & 0
        \end{pmatrix} 
        \begin{pmatrix}
            \boldsymbol{r}_{\mathrm{o}} \\
            \boldsymbol{r}_{\mathrm{e}}
        \end{pmatrix}
        = \lambda
        \begin{pmatrix}
            \boldsymbol{r}_{\mathrm{o}} \\
            \boldsymbol{r}_{\mathrm{e}}
        \end{pmatrix},
    \end{equation*}
    one sees that $\boldsymbol{\Lambda}_r^{-1} \tilde{\mathbf{L}}_{\mathrm{ee}} \boldsymbol{r}_{\mathrm{e}} = \lambda \boldsymbol{r}_{\mathrm{o}}$ and $\boldsymbol{\Lambda}_r^{-1} \tilde{\mathbf{L}}_{\mathrm{oo}} \boldsymbol{r}_{\mathrm{o}} = \lambda \boldsymbol{r}_{\mathrm{e}}$. Then
    \begin{equation*}
        \begin{pmatrix}
        0 & \boldsymbol{\Lambda}_1^{-1} \tilde{\mathbf{L}}_{\mathrm{ee}} \\
        \boldsymbol{\Lambda}_1^{-1}\tilde{\mathbf{L}}_{\mathrm{oo}} & 0
        \end{pmatrix} 
        \begin{pmatrix}
            -\boldsymbol{r}_{\mathrm{o}} \\
            \boldsymbol{r}_{\mathrm{e}}
        \end{pmatrix}
        =
        \begin{pmatrix}
            \boldsymbol{\Lambda}_r^{-1} \tilde{\mathbf{L}}_{\mathrm{ee}} \boldsymbol{r}_{\mathrm{e}} \\
            -\boldsymbol{\Lambda}_r^{-1} \tilde{\mathbf{L}}_{\mathrm{oo}} \boldsymbol{r}_{\mathrm{o}}
        \end{pmatrix} 
        = 
        \begin{pmatrix}
            \lambda \boldsymbol{r}_{\mathrm{o}} \\
            -\lambda \boldsymbol{r}_{\mathrm{e}}
        \end{pmatrix} 
        = - \lambda
        \begin{pmatrix}
            -\boldsymbol{r}_{\mathrm{o}} \\
            \boldsymbol{r}_{\mathrm{e}}
        \end{pmatrix},
    \end{equation*}
    which completes the proof.
\end{proof}
By the above lemma, the eigenvalue decomposition of the matrix in \eqref{eq:vrhsmat} is
\begin{equation}\label{eq:Meig}
    \begin{pmatrix}
        0 & \boldsymbol{\Lambda}_r^{-1} \tilde{\mathbf{L}}_{\mathrm{ee}} \\
        \boldsymbol{\Lambda}_r^{-1} \tilde{\mathbf{L}}_{\mathrm{oo}} & 0
    \end{pmatrix} = \mathbf{R}_r
    \begin{pmatrix}
        \tilde{\lambda}_1 \\
        & -\tilde{\lambda}_1 \\
        & & \ddots \\
        & & & \tilde{\lambda}_r \\
        & & & & -\tilde{\lambda}_r
    \end{pmatrix}
    \mathbf{R}_r^{-1},
\end{equation}
where $\tilde{\lambda}_1 \geqslant \tilde{\lambda}_2 \geqslant ... \geqslant \tilde{\lambda}_s > 0 = \tilde{\lambda}_{s+1} = \cdots = \tilde{\lambda}_r$ for some $s \leqslant r$, and
\begin{displaymath}
\mathbf{R}_r = \begin{pmatrix}
    \mathbf{R}_s & \mathbf{R}_0
\end{pmatrix}
\end{displaymath}
with
\begin{equation}
    \mathbf{R}_s =
    \begin{pmatrix}
        \boldsymbol{r}_{\mathrm{o}1} & - \boldsymbol{r}_{\mathrm{o}1} & \ldots & \boldsymbol{r}_{\mathrm{o}s} & -\boldsymbol{r}_{\mathrm{o}s} \\
        \boldsymbol{r}_{\mathrm{e}1} & \boldsymbol{r}_{\mathrm{e}1} & \ldots & \boldsymbol{r}_{\mathrm{e}s} & \boldsymbol{r}_{\mathrm{e}s} 
    \end{pmatrix}
\end{equation}
For simplicity, we define $\mathbf{L}_r^{\intercal} = \mathbf{R}_r^{-1}$ so that $\mathbf{L}_r$ has the form $\begin{pmatrix} \mathbf{L}_s & \mathbf{L}_0 \end{pmatrix}$ with
\begin{equation}
    \mathbf{L}_s =
    \begin{pmatrix}
        \boldsymbol{l}_{\mathrm{o}1} & - \boldsymbol{l}_{\mathrm{o}1} & \ldots & \boldsymbol{l}_{\mathrm{o}s} & -\boldsymbol{l}_{\mathrm{o}s} \\
        \boldsymbol{l}_{\mathrm{e}1} & \boldsymbol{l}_{\mathrm{e}1} & \ldots & \boldsymbol{l}_{\mathrm{e}s} & \boldsymbol{l}_{\mathrm{e}s} 
    \end{pmatrix}
\end{equation}
If we assume the simulation domain is $(0,1)$ without loss of generality, the solution of such a linear system has the form
\begin{equation}\label{eq:r37sol}
\begin{split}
    \boldsymbol{v}(x) = & \begin{pmatrix}
        \boldsymbol{v}_{\mathrm{o}}(x) \\
        \boldsymbol{v}_{\mathrm{e}}(x)
    \end{pmatrix}
    = \mathbf{R}_r 
    \begin{pmatrix}
        e^{\tilde{\lambda}_1 x} \\
        & e^{-\tilde{\lambda}_1 x} \\
        & & \ddots \\
        & & & e^{\tilde{\lambda}_r x} \\ 
        & & & & e^{-\tilde{\lambda}_r x}
    \end{pmatrix}
    \mathbf{L}_r^{\intercal} \boldsymbol{v}(0) \\
    & = 
    \sum_{j=1}^{s} \left( e^{\tilde{\lambda}_j x}
    \begin{pmatrix}
        \boldsymbol{r}_{\mathrm{o}j} \\
        \boldsymbol{r}_{\mathrm{e}j} 
    \end{pmatrix} (\boldsymbol{l}_{\mathrm{o}j}^{\intercal} , \boldsymbol{l}_{\mathrm{e}j}^{\intercal} )
    + 
    e^{-\tilde{\lambda}_j x}
    \begin{pmatrix}
        -\boldsymbol{r}_{\mathrm{o}j} \\
        \boldsymbol{r}_{\mathrm{e}j} 
    \end{pmatrix} (-\boldsymbol{l}_{\mathrm{o}j}^{\intercal} , \boldsymbol{l}_{\mathrm{e}j}^{\intercal} )
    \right) \boldsymbol{v}(0) + \mathbf{R}_0 \mathbf{L}_0^{\intercal} \boldsymbol{v}(0) \\
    & = 
    \sum_{j=1}^{s} \left(
    2\cosh(\tilde{\lambda}_j x)
    \begin{pmatrix}
         \boldsymbol{r}_{\mathrm{o}j} \boldsymbol{l}_{\mathrm{o}j}^{\intercal} \boldsymbol{v}_{\mathrm{o}}(0) \\
         \boldsymbol{r}_{\mathrm{e}j} \boldsymbol{l}_{\mathrm{e}j}^{\intercal} \boldsymbol{v}_{\mathrm{e}}(0) 
    \end{pmatrix} +
    2\sinh(\tilde{\lambda}_j x)
    \begin{pmatrix}
         \boldsymbol{r}_{\mathrm{o}j} \boldsymbol{l}_{\mathrm{e}j}^{\intercal} \boldsymbol{v}_{\mathrm{e}}(0) \\
         \boldsymbol{r}_{\mathrm{e}j} \boldsymbol{l}_{\mathrm{o}j}^{\intercal} \boldsymbol{v}_{\mathrm{o}}(0) 
    \end{pmatrix}
    \right) + \mathbf{R}_0 \mathbf{L}_0^{\intercal} \boldsymbol{v}(0).
\end{split}
\end{equation}
The rank $r$ for Maxwell molecules is 3 less than the rank of the non-Maxwell cases. In fact, for non-Maxwell gases, the values of $\lambda_r$, $\lambda_{r-1}$ and $\lambda_{r-2}$ are very close to $0$, and thus $\tilde{\lambda}_{1,2,3} \gg 1$, which exhibits additional boundary layers. Note that when the system is reduced to the one-dimensional case as in section \ref{sec:boundary_layer}, only one large eigenvalue is left.

To remove the boundary layers in the non-Maxwell cases, we will revise the boundary condition to eliminate the corresponding terms in \eqref{eq:r37sol}.
For simplicity, we only introduce the removal of the boundary layer associated with $\tilde{\lambda}_1$. The other two layers are removed similarly. The idea is to enforce the following equalities in our boundary conditions:
\begin{equation}\label{eq:removelayer}
\boldsymbol{l}_{\mathrm{o}1}^{\intercal} \boldsymbol{v}_{\mathrm{o}}(0) = \boldsymbol{l}_{\mathrm{o}1}^{\intercal} \boldsymbol{v}_{\mathrm{o}}(1) =0,    
\end{equation}
which is sufficient to get rid of the terms involving $\cosh(\tilde{\lambda}_1 x)$ and $\sinh(\tilde{\lambda}_1 x)$ in \eqref{eq:r37sol}. The reason will be explained as follows.

We can follow \eqref{eq:r37sol} to write the solution using boundary conditions at $x=1$ as
\begin{equation}\label{eq:r37solat1}
\begin{split}
    &\boldsymbol{v}(x) = \\ 
    & \quad \sum_{j=1}^{r} \left(
    2\cosh(\tilde{\lambda}_j (x-1))
    \begin{pmatrix}
         \boldsymbol{r}_{\mathrm{o}j} \boldsymbol{l}_{\mathrm{o}j}^{\intercal} \boldsymbol{v}_{\mathrm{o}}(1) \\
         \boldsymbol{r}_{\mathrm{e}j} \boldsymbol{l}_{\mathrm{e}j}^{\intercal} \boldsymbol{v}_{\mathrm{e}}(1)
    \end{pmatrix} +
    2\sinh(\tilde{\lambda}_j (x-1))
    \begin{pmatrix}
         \boldsymbol{r}_{\mathrm{o}j} \boldsymbol{l}_{\mathrm{e}j}^{\intercal} \boldsymbol{v}_{\mathrm{e}}(1) \\
         \boldsymbol{r}_{\mathrm{e}j} \boldsymbol{l}_{\mathrm{o}j}^{\intercal} \boldsymbol{v}_{\mathrm{o}}(1) 
    \end{pmatrix}
    \right)+ \mathbf{R}_0 \mathbf{L}_0^{\intercal} \boldsymbol{v}(1).
\end{split}
\end{equation}
Plugging the following equalities into \eqref{eq:r37solat1}:
\begin{equation*}
\begin{split}
    \cosh( \tilde{\lambda}_1(x-1)) = \cosh(\tilde{\lambda}_1 x) \cosh(-\tilde{\lambda}_1) + \sinh(\tilde{\lambda}_1 x) \sinh(-\tilde{\lambda}_1), \\
    \sinh( \tilde{\lambda}_1(x-1)) = \sinh(\tilde{\lambda}_1 x) \cosh(-\tilde{\lambda}_1) + \cosh(\tilde{\lambda}_1 x) \sinh(-\tilde{\lambda}_1),
\end{split}
\end{equation*}
and matching the coefficients of $\cosh(\tilde{\lambda}_1 x)$ and $\sinh(\tilde{\lambda}_1 x)$ in \eqref{eq:r37sol}, one can show that
\begin{equation}
\begin{split}
    \begin{pmatrix}
        \boldsymbol{r}_{\mathrm{o}1} ( \boldsymbol{l}_{\mathrm{o}1}^{\intercal} \boldsymbol{v}_{\mathrm{o}}(1) \cosh(-\tilde{\lambda}_1) + \boldsymbol{l}_{\mathrm{e}1}^{\intercal} \boldsymbol{v}_{\mathrm{e}}(1) \sinh(-\tilde{\lambda}_1) ) \\
        \boldsymbol{r}_{\mathrm{e}1} ( \boldsymbol{l}_{\mathrm{e}1}^{\intercal} \boldsymbol{v}_{\mathrm{e}}(1) \cosh(-\tilde{\lambda}_1) + \boldsymbol{l}_{\mathrm{o}1}^{\intercal} \boldsymbol{v}_{\mathrm{o}}(1) \sinh(-\tilde{\lambda}_1) )       
    \end{pmatrix} = 
    \begin{pmatrix}
         \boldsymbol{r}_{\mathrm{o}1} \boldsymbol{l}_{\mathrm{o}1}^{\intercal} \boldsymbol{v}_{\mathrm{o}}(0) \\
         \boldsymbol{r}_{\mathrm{e}1} \boldsymbol{l}_{\mathrm{e}1}^{\intercal} \boldsymbol{v}_{\mathrm{e}}(0) 
    \end{pmatrix}, \\
    \begin{pmatrix}
        \boldsymbol{r}_{\mathrm{o}1} ( \boldsymbol{l}_{\mathrm{o}1}^{\intercal} \boldsymbol{v}_{\mathrm{o}}(1) \sinh(-\tilde{\lambda}_1) + \boldsymbol{l}_{\mathrm{e}1}^{\intercal} \boldsymbol{v}_{\mathrm{e}}(1) \cosh(-\tilde{\lambda}_1) ) \\
        \boldsymbol{r}_{\mathrm{e}1} ( \boldsymbol{l}_{\mathrm{e}1}^{\intercal} \boldsymbol{v}_{\mathrm{e}}(1) \sinh(-\tilde{\lambda}_1) + \boldsymbol{l}_{\mathrm{o}1}^{\intercal} \boldsymbol{v}_{\mathrm{o}}(1) \cosh(-\tilde{\lambda}_1) )       
    \end{pmatrix} = 
    \begin{pmatrix}
         \boldsymbol{r}_{\mathrm{o}1} \boldsymbol{l}_{\mathrm{e}1}^{\intercal} \boldsymbol{v}_{\mathrm{e}}(0) \\
         \boldsymbol{r}_{\mathrm{e}1} \boldsymbol{l}_{\mathrm{o}1}^{\intercal} \boldsymbol{v}_{\mathrm{o}}(0) 
    \end{pmatrix} .
\end{split}
\end{equation}
We can now apply $\boldsymbol{l}_{\mathrm{o}1}^{\intercal} \boldsymbol{v}_{\mathrm{o}}(0) = \boldsymbol{l}_{\mathrm{o}1}^{\intercal} \boldsymbol{v}_{\mathrm{o}}(1) =0$ to the above equation, and it is not difficult to derive $\boldsymbol{l}_{\mathrm{e}1}^{\intercal} \boldsymbol{v}_{\mathrm{e}}(0) = \boldsymbol{l}_{\mathrm{e}1}^{\intercal} \boldsymbol{v}_{\mathrm{e}}(1) = 0$. Therefore, all the terms involving $\cosh(\tilde{\lambda}_1 x)$ and $\sinh(\tilde{\lambda}_1 x)$ in \eqref{eq:r37sol} or \eqref{eq:r37solat1} disappear so that the corresponding boundary layers are not present in the solution.

The above derivation shows that \eqref{eq:removelayer} is a sufficient condition to suppress boundary layers. 
Moreover, it is straightforward to generalize such a condition to multidimensional cases by imposing $\boldsymbol{l}_{\mathrm{o}1}^{\intercal} \boldsymbol{v}_{\mathrm{o}}=0$ for $\boldsymbol{v}_{\mathrm{o}}$ at all boundary points. Recall that the Onsager boundary conditions take a form of \eqref{eq:r13bc}, which can be reformulated as
\begin{displaymath}
    \boldsymbol{v}_{\mathrm{o}} = \tilde{\mathbf{Q}} \boldsymbol{\Lambda}(\tilde{\boldsymbol{g}}_{\mathrm{ext}} - \boldsymbol{v}_{\mathrm{e}}).
\end{displaymath}
To impose $\boldsymbol{l}_{\mathrm{o}1}^{\intercal} \boldsymbol{v}_{\mathrm{o}}=0$, we choose to modify the symmetric matrix $\tilde{\mathbf{Q}}$ such that
\begin{equation}\label{eq:stilde}
    \boldsymbol{l}_{\mathrm{o}1}^{\intercal} \tilde{\mathbf{Q}} = 0.
\end{equation}
However, \eqref{eq:stilde} is an underdetermined linear system. To make it a uniquely solvable, we add the following constraints: 
\begin{enumerate}
\item The revised $\tilde{\mathbf{Q}}$ must be symmetric.
\item An element of $\tilde{\mathbf{Q}}$ should remain zero if it is zero for all non-Maxwell gas molecules.
\item \label{item:mbciii} An element $Q_{ij} = \frac{2\chi}{2-\chi} \frac{\langle \phi_{\mathrm{odd}}^i, \mathcal{C} \xi_n^{-1} \phi_{\mathrm{odd}}^j \rangle}{\langle \phi_{\mathrm{odd}}^j, \phi_{\mathrm{odd}}^j \rangle}$ (refer to \eqref{eq:bc_odd}) of $\tilde{\mathbf{Q}}$ should remain unchanged if both $\phi_{\mathrm{odd}}^i$ and $\phi_{\mathrm{odd}}^j$ belong to $\mathbb{V}_{\mathrm{odd}} \cap (\mathbb{V}_{\mathrm{modified}})^{\perp}$, where
\begin{displaymath}
    \mathbb{V}_{\mathrm{modified}} = \operatorname{span}\Big\{ \phi_{n\tau_1}^{2}, \phi_{n\tau_2}^{2}, \phi_n^{2} + \frac{c^{3,1}_1}{c^{2,1}_1} \phi_n^{3}\Big\}
\end{displaymath}
is a subspace of the $\mathbb{V}_{\mathrm{odd}}$ in \eqref{eq:Vodd}.
\end{enumerate}
The rationale for proposing the constraint \eqref{item:mbciii} is that $\mathbb{V}_{\mathrm{modified}} = \emptyset$ in the case of Maxwell gas, which does not exhibit any non-physical boundary layers. The additional basis functions in $\mathbb{V}_{\mathrm{modified}}$ introduce these layers in the non-Maxwell cases. Therefore, we restricts us to modify those elements deduced from $\mathbb{V}_{\mathrm{modified}}$.

After imposing these conditions, the matrix $\tilde{\mathbf{Q}}$ is uniquely determined. Currently, we do not have theoretical guarantee of the semidefiniteness of the revised $\tilde{\mathbf{Q}}$, but this turns out to be true in all cases done in our tests. The symmetry and semidefiniteness of $\tilde{\mathbf{Q}}$ indicates that the structure \eqref{eq:r13bc} is well maintained after the modification, and thus does not ruin the $L^2$ stability or the second law of thermodynamics.

To demonstrate the effect of this modification, we reconsider the example described in section \ref{sec:boundary_layer}, with the boundary conditions changed to the new ones introduced in this section by replacing $\hat{m}_{ij}$ with ${m}_{ij}$ in \eqref{eq:1dbcv}--\eqref{eq:1dbcsigma}. The results are plotted in figure \ref{fig:fixbc}. A comparison with figure \ref{fig:layers} clearly demonstrates that the non-physical boundary layers have been removed.

\begin{figure}
    \centering
    \includegraphics[width=0.4\textwidth]{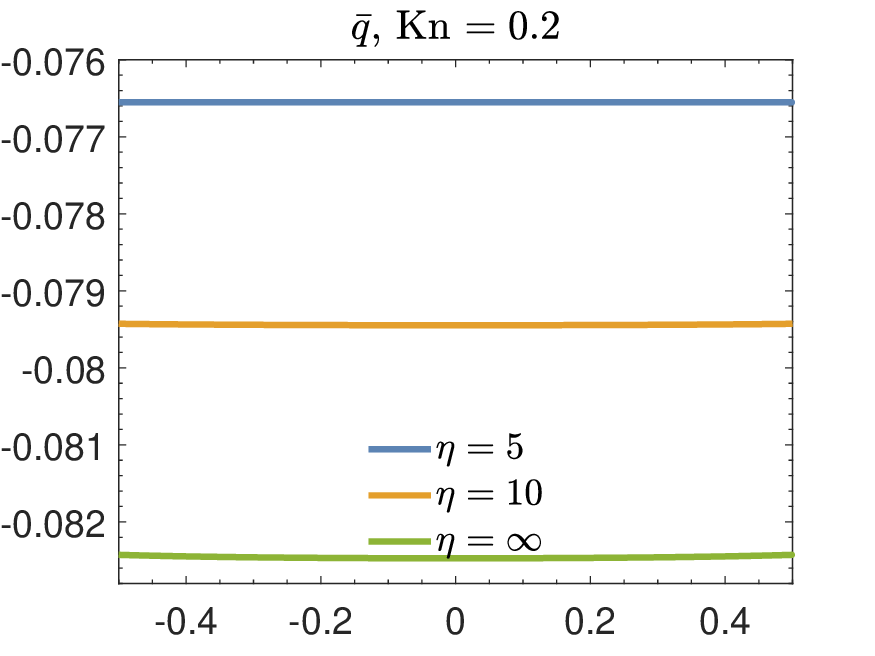}
    \hspace{0.05\textwidth}
    \includegraphics[width=0.4\textwidth]{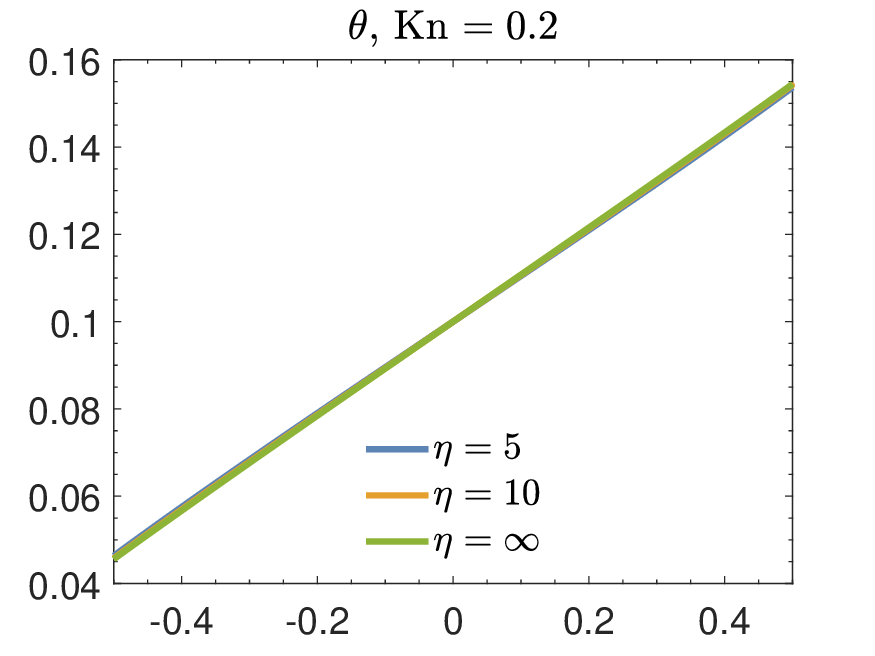}
    \caption{$\bar{q}$ and $\theta$ of the one-dimensional problem \eqref{eq:1deqrho}--\eqref{eq:1dbcsigma} with modified coefficient $\hat{m}_{ij} \to {m}_{ij}$. $\theta^W=0$ in the left boundary and $\theta^{W}=0.2$ in the right one.}
    \label{fig:fixbc}
\end{figure}

\section{One-dimensional simulations}
\label{sec:1Dexample}
In this section, we will apply our model to the example for one-dimensional channel flows in \cite{Hu2020,yang2024siap}. We will show the analytic solutions of the steady-state equations of our model and make a comparison with the results computed by direct simulation Monte Carlo (DSMC) from Bird's code (\cite{BirdBook1994}) in section \ref{sec:steadysol}.
After that, the numerical solutions of our time-dependent model will be demonstrated in section \ref{sec:timenumericalsol} using the finite element method.

The gas flow between two infinitely large parallel plates is considered in the one-dimensional channel flows. Both the two plates are perpendicular to the $x_2$-axis, and the two plates are assumed to move along $x_1$-axis with constant velocities. The distance of two plates is taken as $1$, and the coordinates are established such that the simulation domain is $x_2 \in (-0.5, 0.5)$. The temperature and velocity of the left plates ($x_2 = -0.5$) are denoted as $\theta^W_l$ and $v^W_l$, respectively, and similarly the temperature and velocity of the right plates ($x_2 = 0.5$) are denoted as $\theta^W_r$ and $v^W_r$, respectively. Under this setting, all the moments are functions only of $x_2$, and therefore the moment equations for the one-dimensional channel flows can be obtained via dropping all partial derivatives with respect to $x_1$ and $x_3$. Furthermore, as two plates move along $x_1$-axis, the distribution function enjoys symmetry $f(\bx, \xi_1,\xi_2,\xi_3,t)=f(\bx, \xi_1,\xi_2,-\xi_3, t)$ so that the moments that are odd in $\xi_3$ vanish. As a result, 
\begin{displaymath}
    v_3 = \bar{q}_3 = \bar{\sigma}_{13} = \bar{\sigma}_{23} =  0.
\end{displaymath}
Therefore in the one-dimensional channel flows, the $13$ moments are reduced to $9$ moments: $\rho$, $\theta$, $v_{1}$, $v_{2}$, $\bar{q}_{1}$, $\bar{q}_{2}$, $\bar{\sigma}_{11}$, $\bar{\sigma}_{12}$, $\bar{\sigma}_{22}$. In addition, both plates are assumed to be completely diffusive, where the accommodation coefficient is always chosen as $\chi = 1$.

\subsection{Steady-state examples}\label{sec:steadysol}
To compare with the example in sections \ref{sec:boundary_layer} \& \ref{sec:removelayer}, we consider a steady-state problem here. In fact, equations \eqref{eq:1deqrho}--\eqref{eq:1deqsigma} can be regarded as a portion of the considering one-dimensional steady-state channel flows by the following replacement: $x \to x_2$, $v \to v_2$, $\bar{q} \to \bar{q}_2$ and $\bar{\sigma} \to \bar{\sigma}_{22}$. The boundary conditions \eqref{eq:1dbcv}--\eqref{eq:1dbcsigma} also share the same form of the considering boundary conditions by the above replacement. We adopt the coefficients $\theta^W_l = 0$ and $\theta^W_r = 0.2$, consistent with the example presented in section \ref{sec:boundary_layer}.

To compare our solutions with the DSMC results, we show the results by using the Knudsen number $\overline{\kn}$ defined by the ratio of the mean free path to the characteristic length as in \cite{BirdBook1994}. The relationship between $\overline{\kn}$ and the parameter $\kn$ in this paper is
\begin{displaymath}
    \kn = \sqrt{\frac{\pi}{2}} \frac{15 \overline{\kn}}{(5 - 2 \omega)(7-2 \omega)}, \qquad \omega = \frac{1}{2} + \frac{2}{\eta - 1}.
\end{displaymath}
Moreover, we plot the actual heat flux $q_2$ instead of $\bar{q}_2$ (see \eqref{eq:qbartoq}) in this example, since $q_2$ is obtained from the DSMC results. To observe the effect of the Knudsen number, we choose two different $\overline{\kn}$ and plot the results for both original and fixed boundary conditions in figures \ref{fig:fixbckn01} \& \ref{fig:fixbckn005}. Our results are depicted as the blue, yellow, and green solid lines corresponding to $\eta = 5, 10$ and $\infty$. The relevant solutions obtained by DSMC are presented by dotted lines of the same colors.

\begin{figure}
    \centering
    \includegraphics[width=0.4\textwidth]{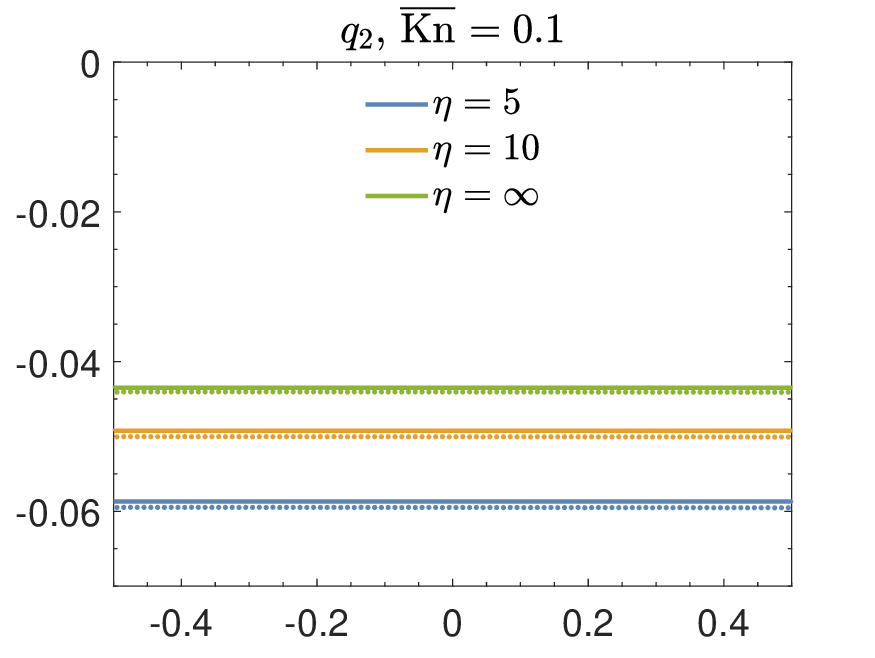}
    \hspace{0.05\textwidth}
    \includegraphics[width=0.4\textwidth]{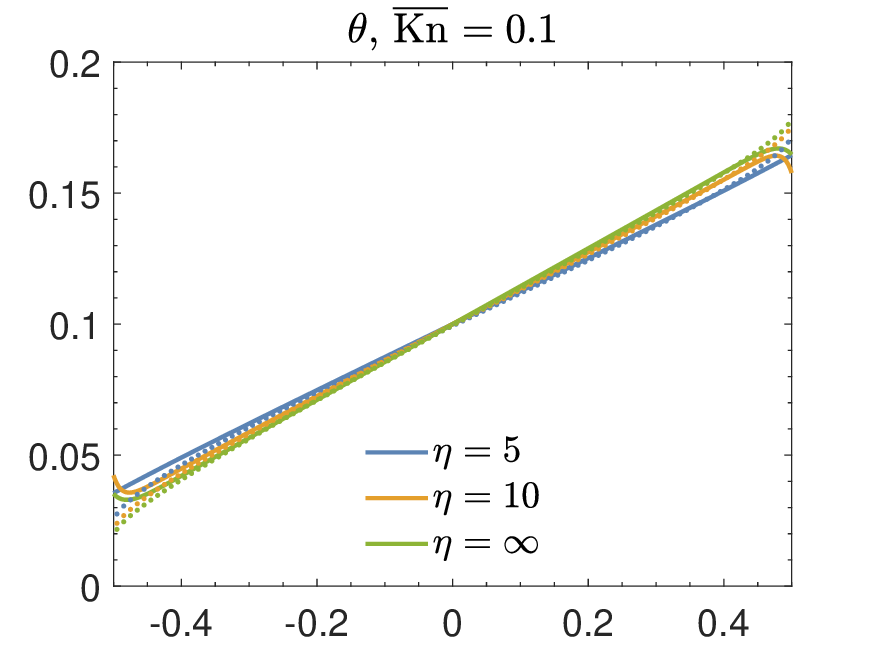} \\[10pt]
    \includegraphics[width=0.4\textwidth]{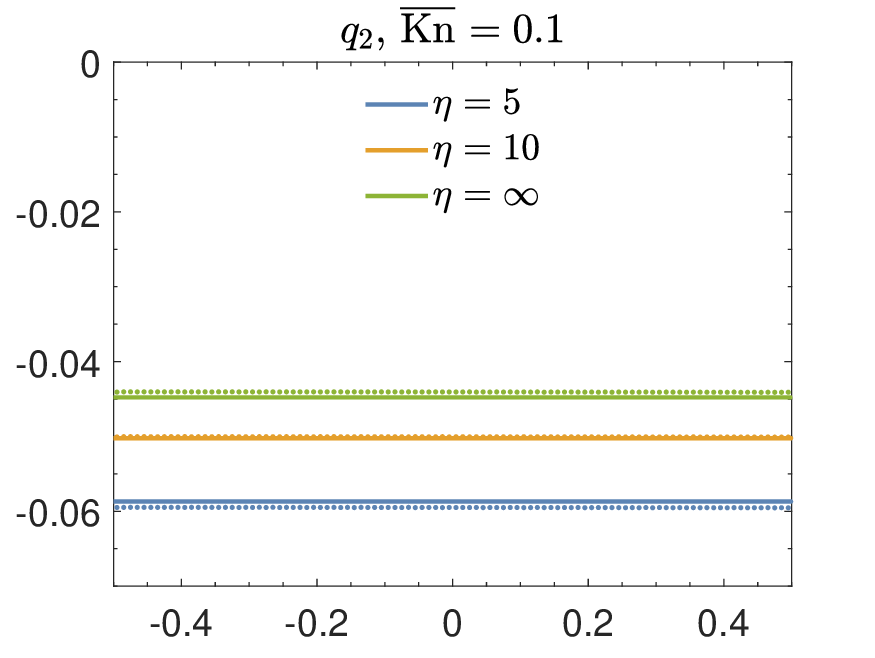}
    \hspace{0.05\textwidth}
    \includegraphics[width=0.4\textwidth]{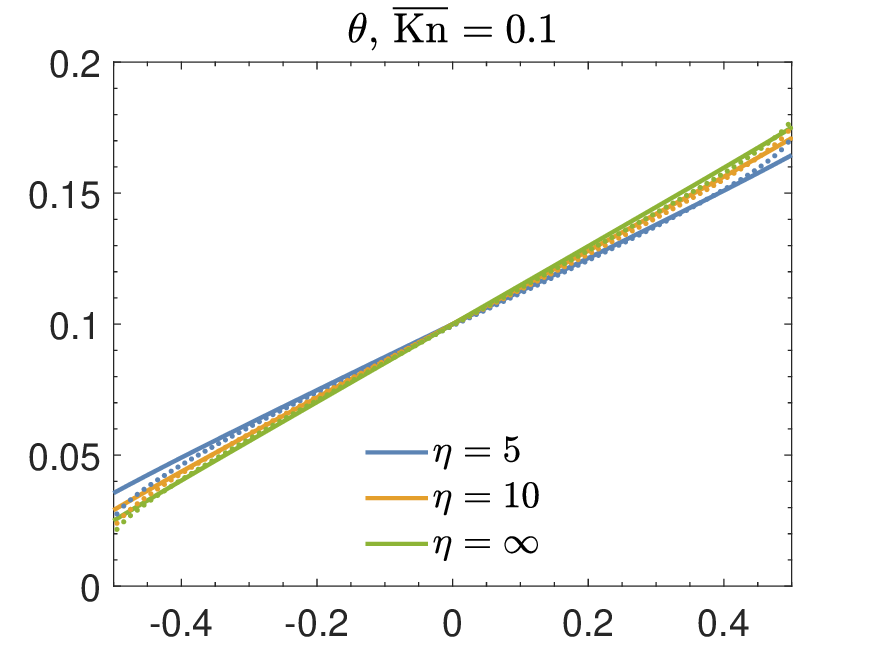}
    \caption{Results of steady-state example when $\overline{\kn}=0.1$. Top subfigures use coefficients $\hat{m}_{ij}$ and bottom subfigures use modified ones $m_{ij}$. DSMC solutions are given by dotted lines of the same colors.}
    \label{fig:fixbckn01}
\end{figure}

\begin{figure}
    \centering
    \includegraphics[width=0.4\textwidth]{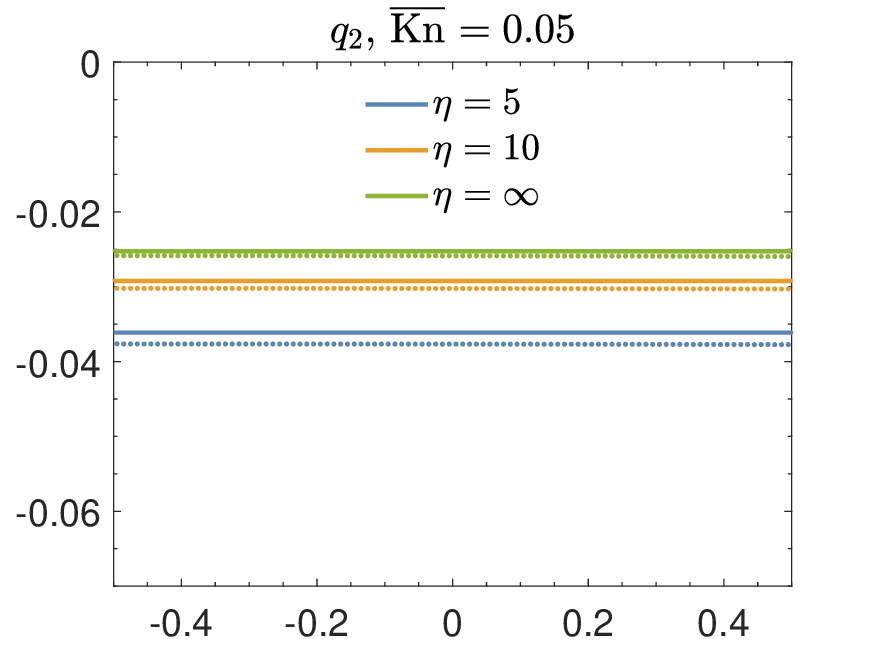}
    \hspace{0.05\textwidth}
    \includegraphics[width=0.4\textwidth]{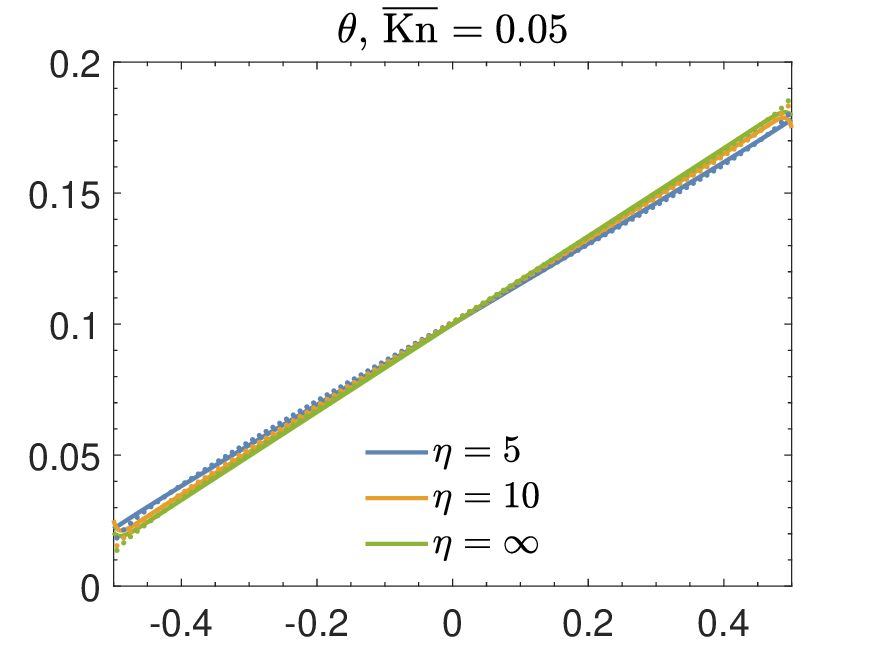} \\[10pt]
    \includegraphics[width=0.4\textwidth]{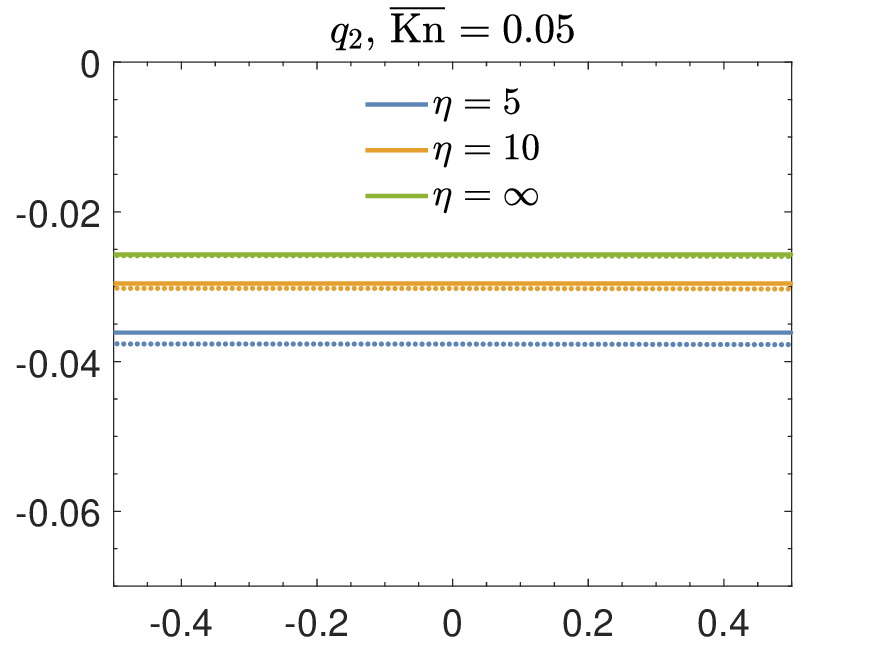}
    \hspace{0.05\textwidth}
    \includegraphics[width=0.4\textwidth]{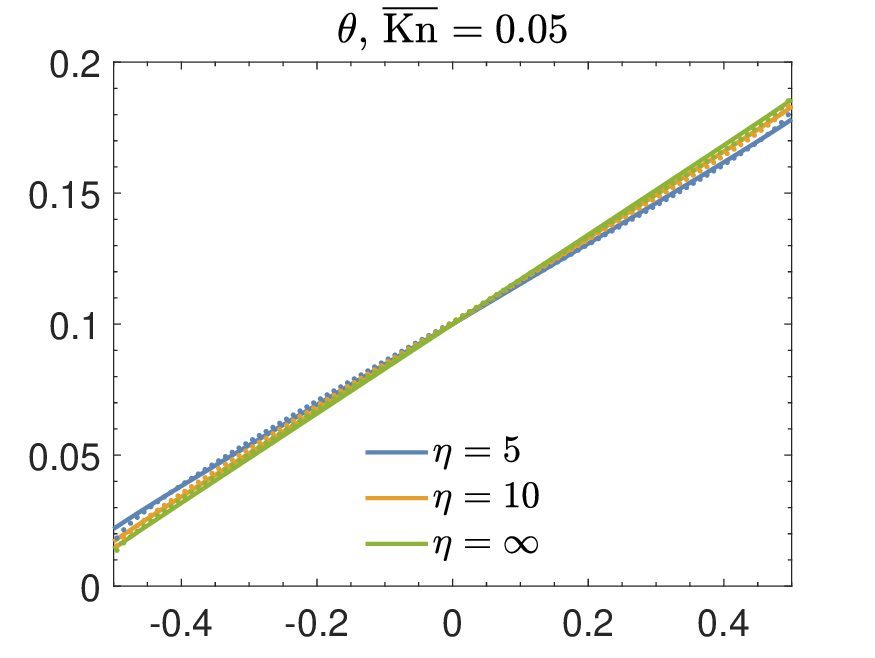}
    \caption{Results of steady-state example when $\overline{\kn}=0.05$. Top subfigures use coefficients $\hat{m}_{ij}$ and bottom subfigures use modified ones $m_{ij}$. DSMC solutions are given by dotted lines of the same colors.}
    \label{fig:fixbckn005}
\end{figure}

The solids lines of the R13 equations generally agree with the dotted lines of the DSMC results, and this agreement is more pronounced for smaller $\kn$, which validates our equations. Moreover, as the DSMC results display no boundary layer, the effectiveness of our modified Onsager boundary conditions is again validated by the successful removal of the undesired boundary layers. Furthermore, compared to the results in figure \ref{fig:layers}, the width of the undesired boundary layers decreases as the Knudsen number decreases, which is consistent with our analysis of the solution in \eqref{eq:1dsolform}. Also, it can be observed that the temperature jump gets more significant as the Knudsen number increases.

\subsection{Time-dependent examples}\label{sec:timenumericalsol}
After validating the steady-state equations, it would be more compelling to verify the derived model in the time-dependent case, as this is the primary focus of this work. However, it would be challenging to obtain the analytic time-dependent solution, therefore we aim to solve the time-dependent equations using the finite element method. 

The numerical setup is described as follows. We use a uniform mesh with a mesh size of $1/1000$ and apply the finite element method with piecewise linear functions. The Crank–Nicolson method is used for temporal discretization with a uniform time step of $1/4000$. The initial value for $\theta$ is set to $\frac{\theta^W_l+\theta^W_r}{2}$, while the initial values for all other moments are set to $0$. Boundary coefficients are taken from \cite{yang2024siap}, which will be specified in the following two cases.

\subsubsection{Couette flow}
In the planar Couette flow, the two plates move in opposite direction and maintain the same temperature. We choose $\theta^W_l = \theta^W_r = 0$ and $ v^W_l = -v^W_r = -0.2$. Results are depicted in figure \ref{fig:CouetteFlow}, where non-physical boundary layers can still be observed in the solution of time-dependent equations using the previous Onsager boundary conditions, although the width of these layers is very thin in this example. These thin layers are successfully removed at various time steps using our modified boundary conditions.  

\begin{figure}
    \centering
    \includegraphics[width=0.4\textwidth]{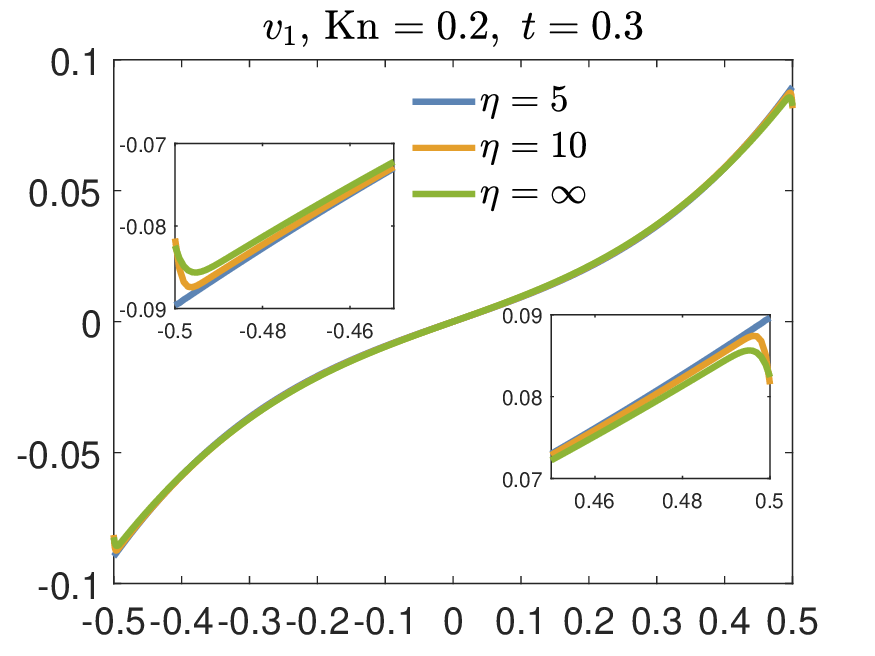}
    \hspace{0.05\textwidth}
    \includegraphics[width=0.4\textwidth]{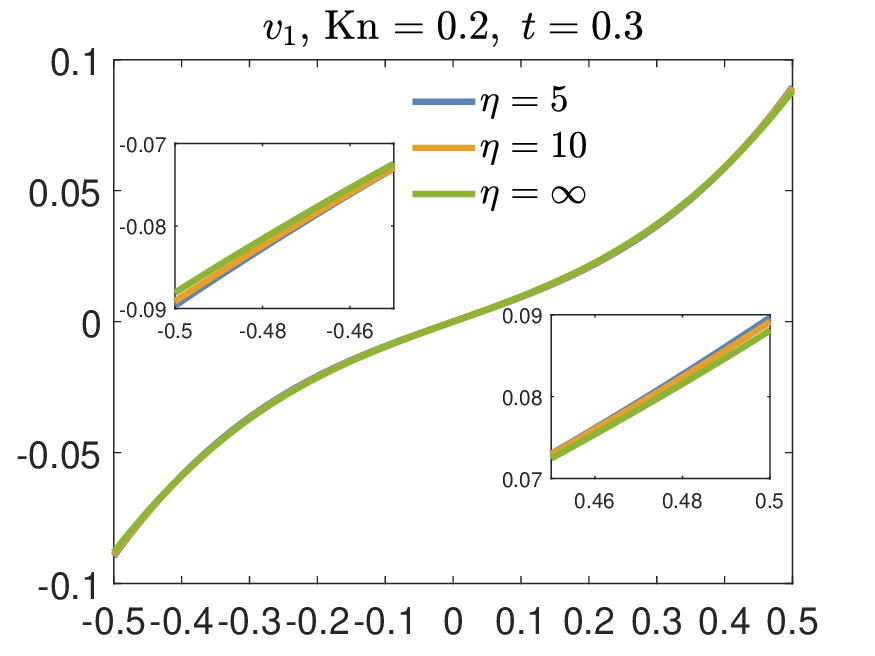} \\[10pt]
    \includegraphics[width=0.4\textwidth]{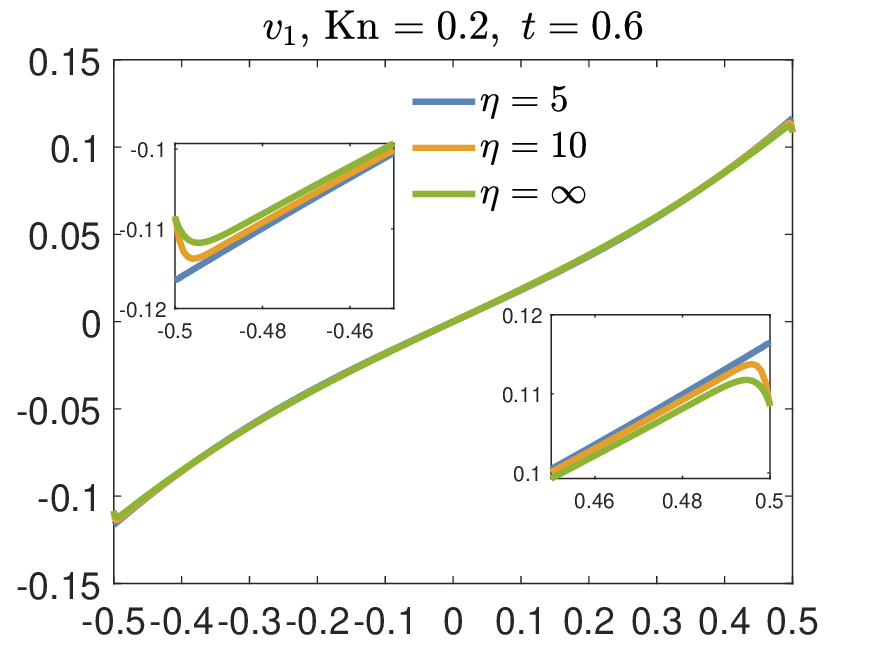}
    \hspace{0.05\textwidth}
    \includegraphics[width=0.4\textwidth]{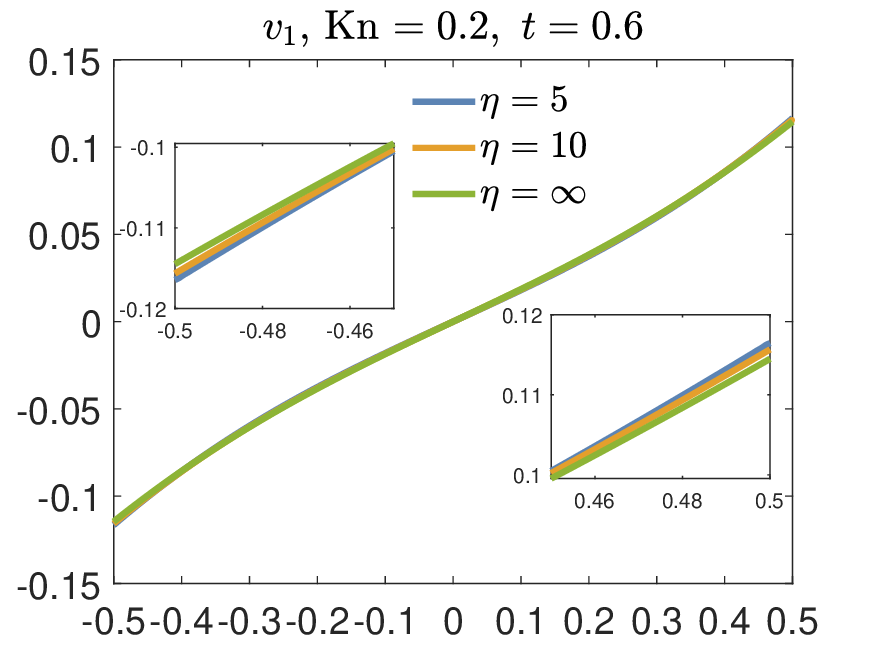}\\[10pt]
    \includegraphics[width=0.4\textwidth]{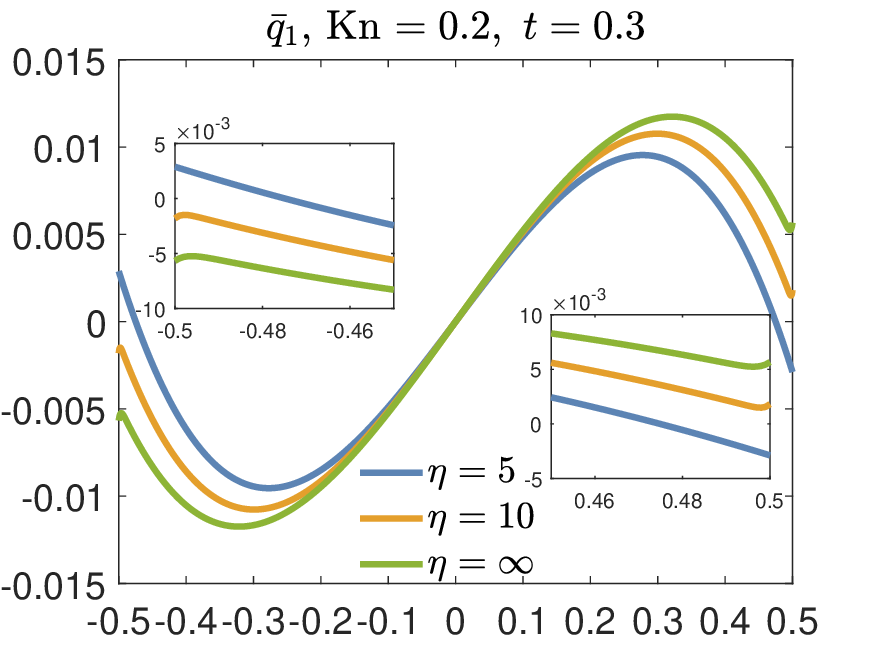}
    \hspace{0.05\textwidth}
    \includegraphics[width=0.4\textwidth]{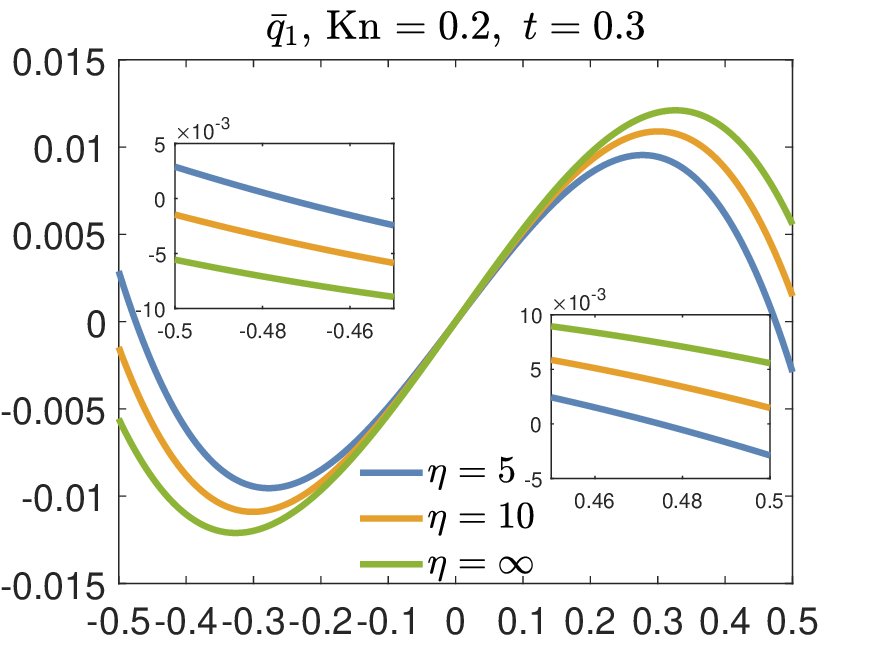}\\[10pt]
    \includegraphics[width=0.4\textwidth]{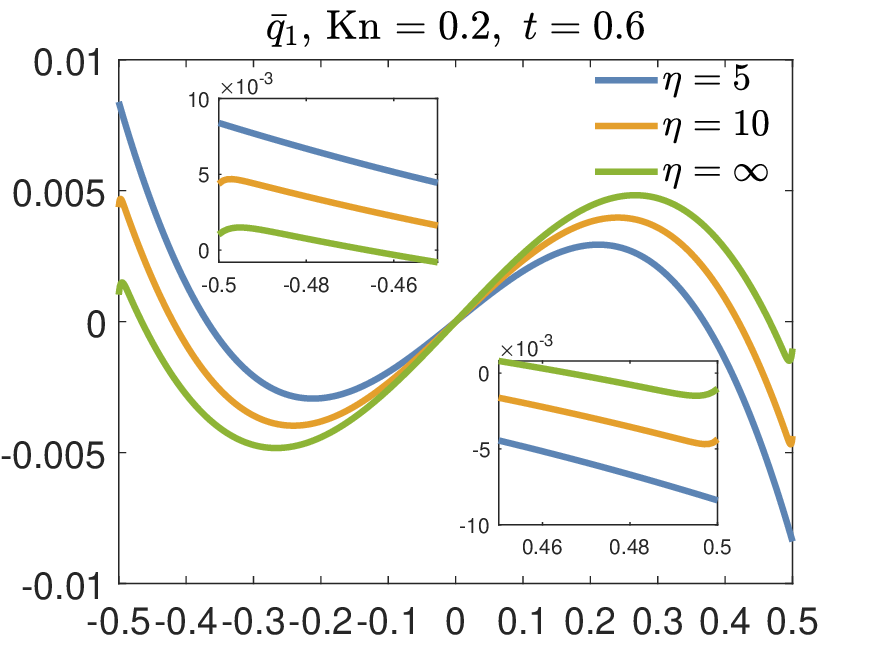}
    \hspace{0.05\textwidth}
    \includegraphics[width=0.4\textwidth]{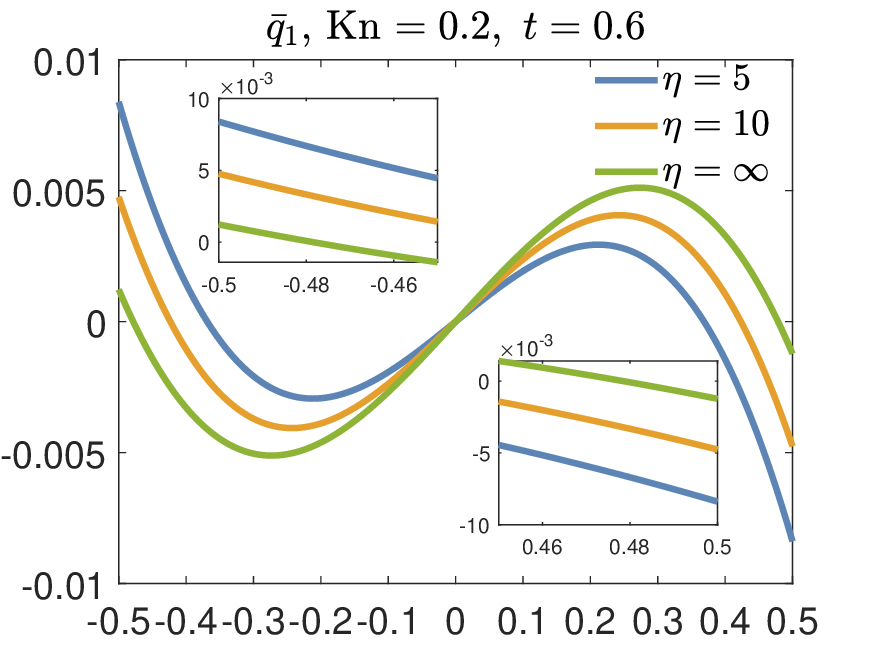}
    \caption{Results of the Couette flow. Left subfigures use previous Onsager boundary conditions and right subfigures use our modified boundary conditions.}
    \label{fig:CouetteFlow}
\end{figure}

\subsubsection{Fourier flow}
In the planar Fourier flow, the two plates are stationary, implying $v^W_l = v^W_r = 0$, and the gas dynamics between the plates are driven by the temperature difference. We set $\theta^W_l = 0 $ and $ \theta^W_r = 0.2$. The results in figure \ref{fig:FourierFlow} demonstrate that the previous Onsager boundary conditions produce non-physical boundary layers, which can be eliminated using our modified boundary conditions. Additionally, these layers are more pronounced for longer computational times, suggesting our modified boundary conditions would be more important in long-time simulations of the proposed R13 equations.

\begin{figure}
    \centering
    \includegraphics[width=0.4\textwidth]{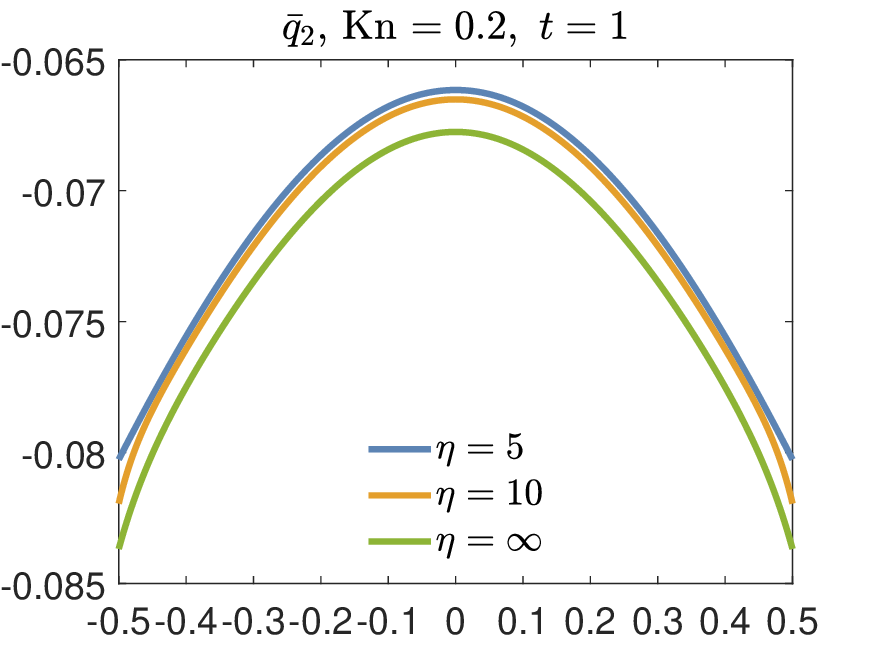}
    \hspace{0.05\textwidth}
    \includegraphics[width=0.4\textwidth]{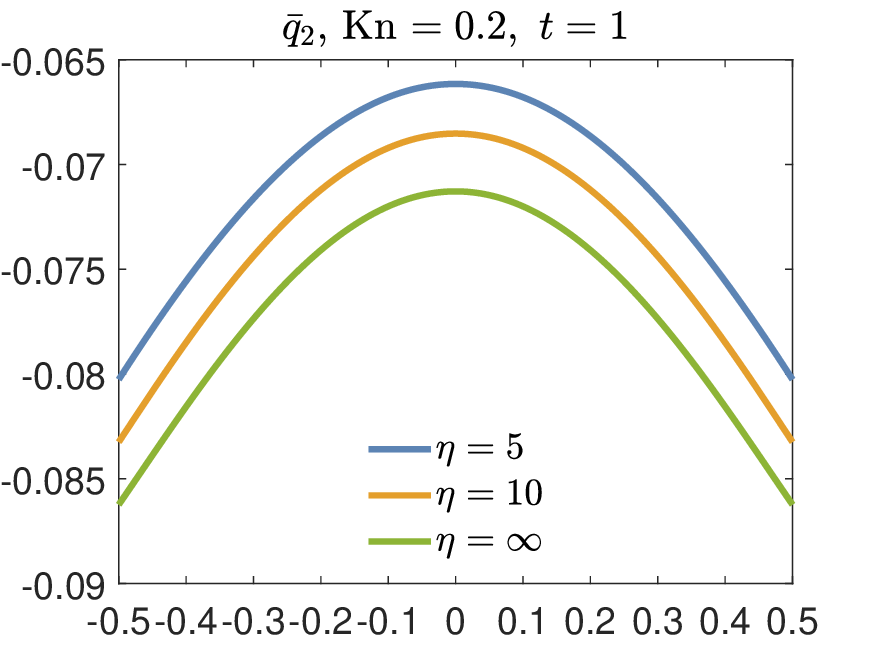} \\[10pt]
    \includegraphics[width=0.4\textwidth]{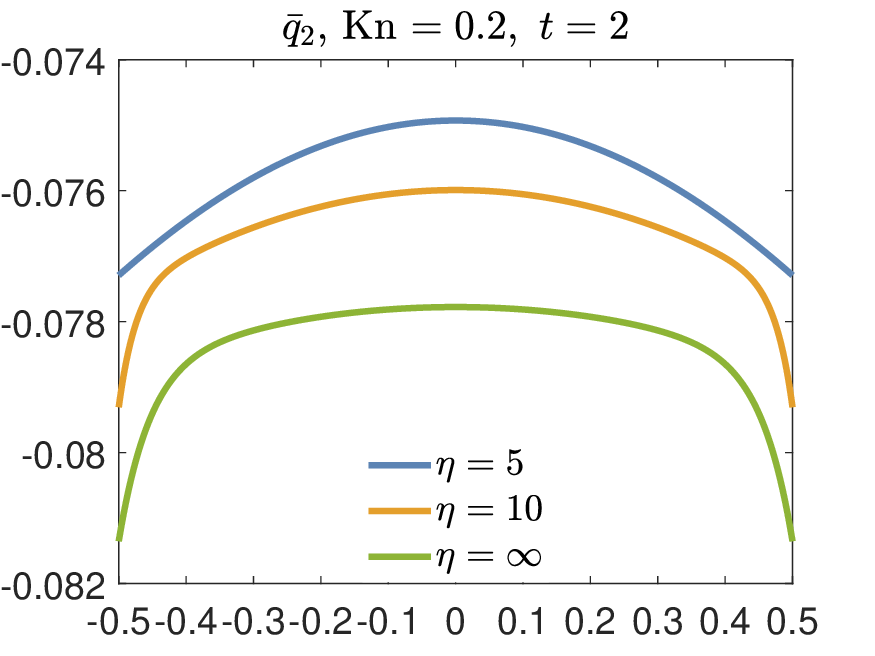}
    \hspace{0.05\textwidth}
    \includegraphics[width=0.4\textwidth]{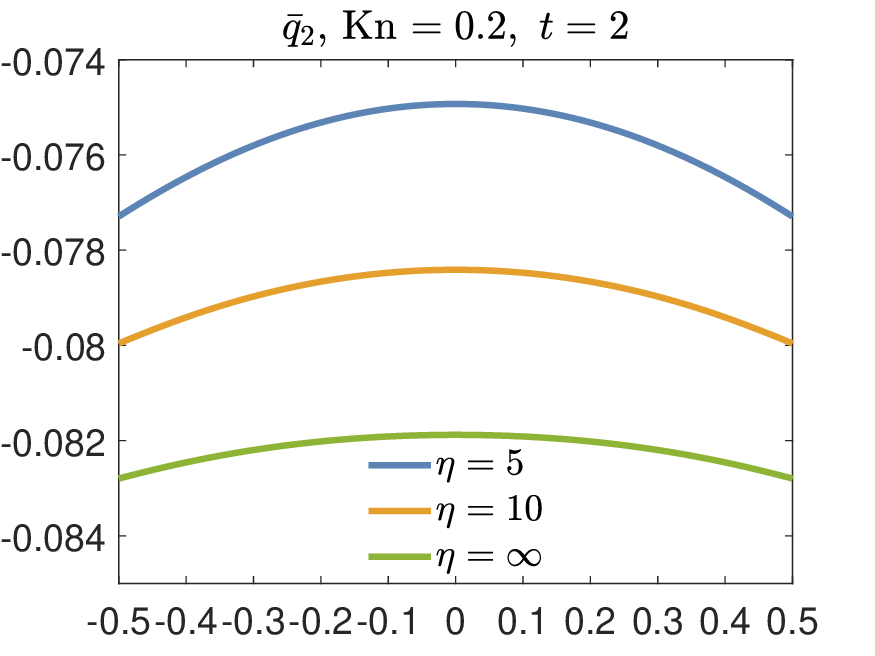} \\[10pt]
    \includegraphics[width=0.4\textwidth]{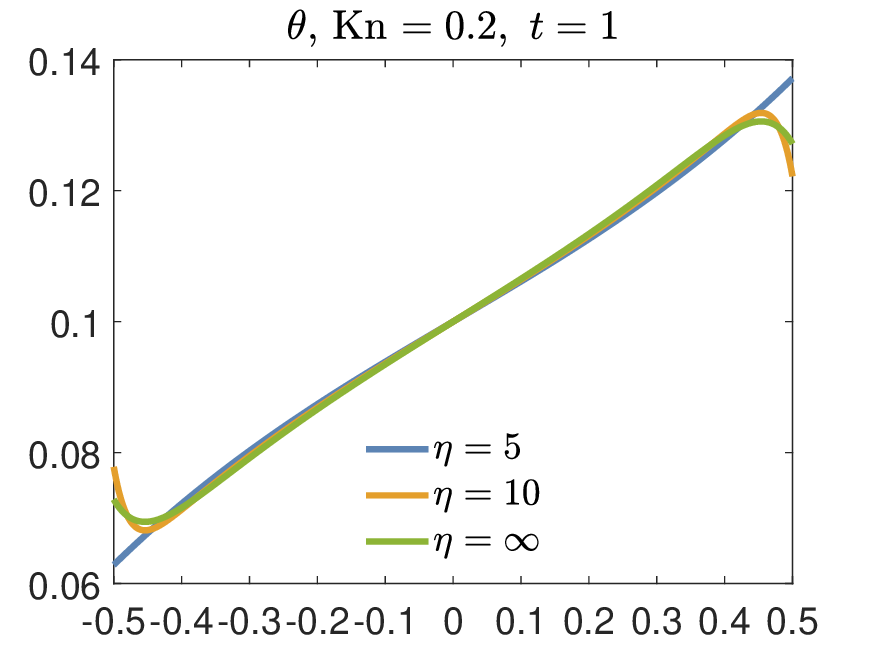}
    \hspace{0.05\textwidth}
    \includegraphics[width=0.4\textwidth]{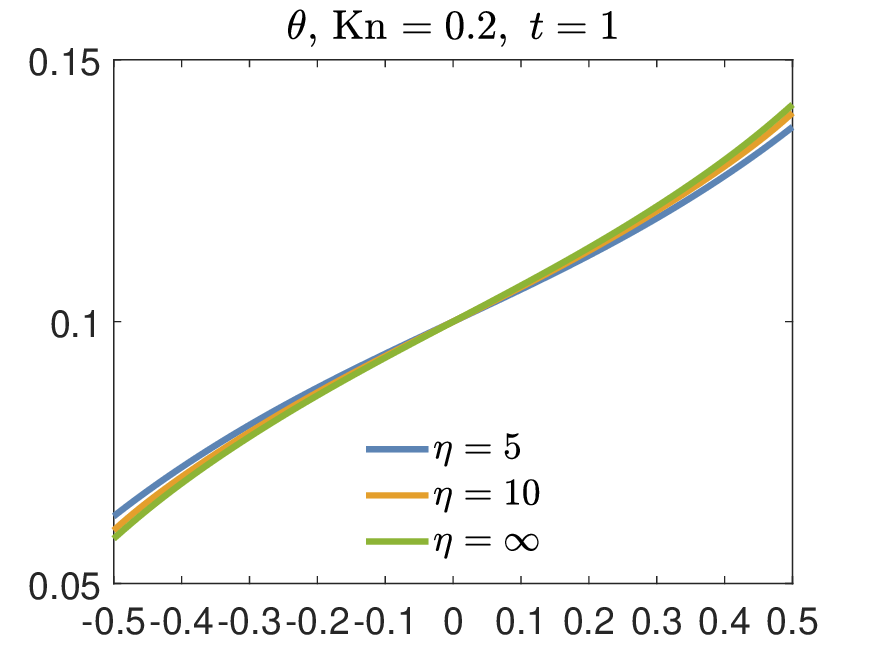} \\[10pt]
    \includegraphics[width=0.4\textwidth]{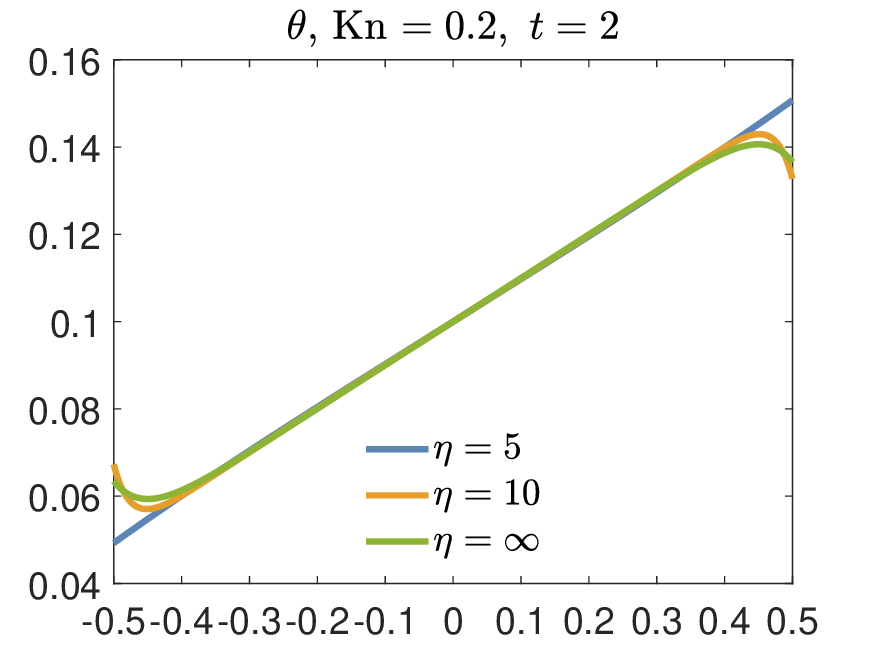}
    \hspace{0.05\textwidth}
    \includegraphics[width=0.4\textwidth]{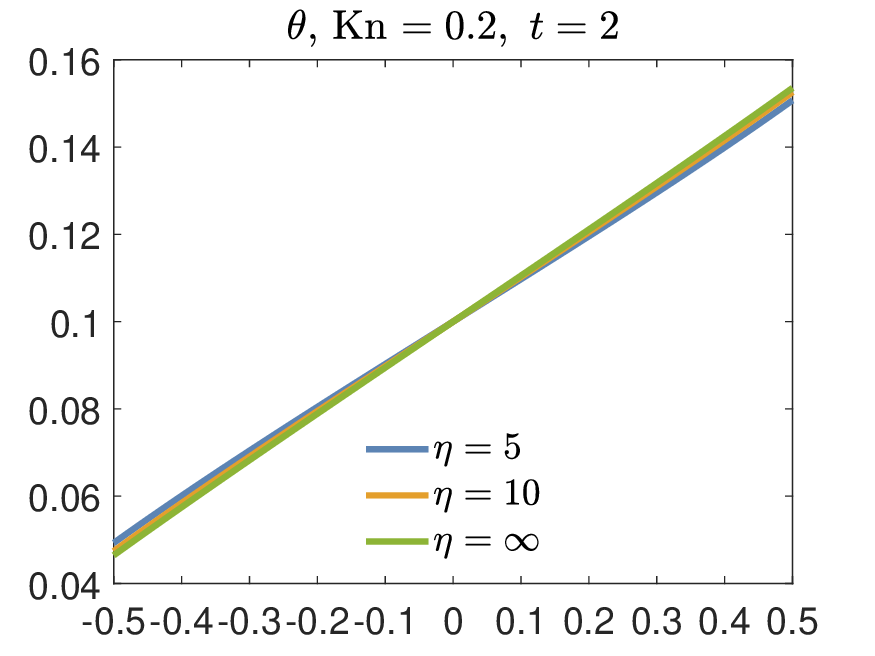}
    \caption{Results of the Fourier flow. Left subfigures use previous Onsager boundary conditions and right subfigures use our modified boundary conditions.}
    \label{fig:FourierFlow}
\end{figure}

\section{Conclusion}
\label{sec:conclusion}
This work provides a new perspective of the derivation of time-dependent regularized 13-moment equations from linear kinetic equation under general elastic collision models. 
Compared with previous works such as \cite{Hu2020} and \cite{yang2024siap}, our equations possess not only the super-Burnett order, but also a symmetric structure that is useful for deriving Onsager boundary conditions. {In the linear regime, our equations can well approximate the Boltzmann equation with moderate Knudsen numbers and preserve the conservation laws and the dissipation of entropy.} Another contribution of this work is the technique to remove undesired boundary layers, which needs to be applied since these layers are obviously unphysical.

Due to the symmetry of the equations and the Onsager boundary conditions, we expect that our equations also admit a symmetric weak form, which extends the weak form of R13 equations for Maxwell molecules (\cite{Theisen2021}). We are currently working on the finite element methods for our equations in the multidimensional case. Note that it is claimed in \cite{Rana2013} that R13 equations can provide satisfactory predictions up to Knudsen number $0.5$, and this needs to be tested for our equations with modified boundary conditions in multidimensional applications. Meanwhile, the work on applying this approach to nonlinear kinetic equations is also ongoing.
\\[10pt]
\paragraph{\textbf{Declaration of interests.}} The author reports no conflict of interest.

\appendix
\section{Galilean invariance of moment equations}
\label{app:galilean}
To demonstrate the Galilean invariance of our moment equations, we need to prove the following invariant properties:
\begin{itemize}
    \item Translational invariance: given any $\vb_{\text{ref}} \in \mathbb{R}^{3}$, under translation  
    \begin{gather*}
        \bx' = \bx + t \vb_{\text{ref}} , \  
 \rho'(\bx',t) = \rho(\bx,t) , \ \theta'(\bx',t) = \theta(\bx,t) , \\
 \vb'(\bx,t) = \vb(\bx',t) + \vb_{\text{ref}}, \ \qbarb'(\bx',t) = \qbarb(\bx,t) , \ \sigmabarb'(\bx',t) = \sigmabarb(\bx,t) ,
    \end{gather*}
    equations \eqref{newr13:eqa}--\eqref{newr13:eqe1} retain their form.
    \item Rotational invariance: given any orthogonal matrix $\Rb\in \mathbb{R}^{3\times 3}$, under rotation 
    \begin{gather*}
        \bx' = \Rb \bx , \ \rho'(\bx',t) = \rho(\bx,t) , \ \theta'(\bx',t) = \theta(\bx,t) ,\\
        \vb'(\bx',t) = \Rb\vb(\bx,t),  \ \qbarb'(\bx',t) = \Rb\qbarb(\bx,t) , \ \sigmabarb'(\bx',t) = \Rb\sigmabarb(\bx,t)\Rb^\intercal ,
    \end{gather*}
       equations \eqref{newr13:eqa}--\eqref{newr13:eqe1} retain their form. 
\end{itemize}

Since the equations rely solely on the derivatives of $\vb$ rather than on $\vb$ itself, the translational invariance can be readily verified in our linear equations. Consequently, we focus on the proof of the rotational invariance in this appendix. 

For \eqref{newr13:eqa}, we have 
\begin{equation}\label{inv_div_v}
    \frac{\partial v'_i}{\partial x'_i} =  \frac{\partial v'_i}{\partial x_k} \frac{\partial x_k}{\partial x'_i} =  R_{ij} \frac{\partial v_j}{\partial x_k} R_{ik} = \frac{\partial v_j}{\partial x_k} \delta_{jk} = \frac{\partial v_j}{\partial x_j}
\end{equation}
which implies that $\nabla_{\bx'}\cdot \vb' = \nabla_{\bx}\cdot \vb$. Therefore, \eqref{newr13:eqa} retains its form under the rotational transformation.

For \eqref{newr13:eqb}, we first calculate the derivative of $\sigmabar$ as
\begin{equation}\label{inv_div_sigma}
    \frac{\partial \sigmabar'_{ij}}{\partial x'_j} =  \frac{\partial \sigmabar'_{ij}}{\partial x_k}\frac{\partial x_k}{\partial x'_j} = R_{il}R_{jl'} \frac{\partial \sigmabar_{ll'}}{\partial x_k} R_{jk} =  R_{il}\delta_{kl'} \frac{\partial \sigmabar_{ll'}}{\partial x_k}  =R_{il}\frac{\partial \sigmabar_{lk}}{\partial x_k},
\end{equation}
and then we arrive at the relationship
\begin{displaymath}
    \frac{\partial^2 \sigmabar'_{ij}}{\partial x'_i\partial x'_j} = R_{il} \frac{\partial^2 \sigmabar_{lk}}{\partial x_k \partial x_{k'}} R_{ik'} = \frac{\partial^2 \sigmabar_{lk}}{\partial x_k \partial x_{k'}} \delta_{lk'} = \frac{\partial^2 \sigmabar_{k'k}}{\partial x_k \partial x_{k'}}
\end{displaymath}
which implies that $\nabla_{\bx'} \cdot (\nabla_{\bx'} \cdot \sigmabarb') = \nabla_{\bx} \cdot (\nabla_{\bx} \cdot \sigmabarb)$. Similar to \eqref{inv_div_v}, we can prove that $\nabla_{\bx'}\cdot \qbarb' = \nabla_{\bx}\cdot \qbarb$. In addition, $\Delta_{\bx'} \theta' = \Delta_{\bx} \theta$ according to the rotational invariance of Laplacian. As a result, \eqref{newr13:eqb} is also rotational invariant.   

For \eqref{newr13:eqc}, one can easily see that
\begin{equation}\label{inv_grad_v_rho_theta}
    \frac{\partial \vb'}{\partial \bx'} = \Rb \frac{\partial \vb}{\partial \bx}, \quad \nabla_{\bx'} \rho' = \Rb\nabla_{\bx} \rho, \quad \nabla_{\bx'} \theta' = \Rb\nabla_{\bx} \theta.
\end{equation}
In addition, we have $\nabla_{\bx'}\cdot \sigmabarb' = \Rb\nabla_{\bx}\cdot \sigmabarb$ by \eqref{inv_div_sigma}. For the term $\nabla \cdot (\nabla \vb)_\stf$, we use the fact 
\begin{equation}\label{inv_gradv}
\begin{split}
        (\nabla \vb)_\stf = &  \ \frac{1}{2}(\nabla \vb + (\nabla \vb)^\intercal) - \frac{1}{3}\text{tr}(\nabla \vb)I, \\ 
        \nabla_{\bx'}\vb' = & \  \Rb\nabla_{\bx}\vb \Rb^T,
        \end{split}
\end{equation}
and cyclic property of trace operator to show that $ (\nabla_{x'} \vb')_{\stf} = \Rb (\nabla_{x} \vb)_{\stf} \Rb^T$. Therefore, we have
\begin{displaymath}
    \frac{\partial }{\partial x'_j}[(\nabla_{x'} \vb')_{\stf}]_{ij} = R_{jk}   \frac{\partial }{\partial x_k} R_{il} [(\nabla_{x} \vb)_{\stf}]_{ll'} R_{jl'} = R_{il} \frac{\partial }{\partial x_k} [(\nabla_{x} \vb)_{\stf}]_{lk}
\end{displaymath}
which implies that $\nabla_{\bx'} \cdot (\nabla_{\bx'} \vb')_\stf = \Rb\nabla_{\bx} \cdot (\nabla_{\bx} \vb)_\stf$. Similarly, we can prove such relationship for $\qbarb$. Combining these two equalities of $\vb$ and $\qbarb$  in conjunction with \eqref{inv_div_sigma} and \eqref{inv_grad_v_rho_theta}, we have verified the rotational invariance of \eqref{newr13:eqc}. The rotational invariance of \eqref{newr13:eqd1} can be similarly proved as \eqref{newr13:eqc}.

For \eqref{newr13:eqe1}, similar as \eqref{inv_gradv}, one can prove that $\nabla_{\bx'}\qbarb' =   \Rb\nabla_{\bx}\qbarb \Rb^T$ and $(\nabla_{x'} \sigmabarb')_{\stf} = \Rb (\nabla_{x} \sigmabarb)_{\stf} \Rb^T$. For the term $\left(\nabla(\nabla\cdot \sigmabarb) \right)_\stf$, it holds that 
\begin{align*}
    [\nabla_{\bx'}(\nabla_{\bx'}\cdot \sigmabarb')]_{ij} = & \ \frac{\partial^2\sigmabar'_{ik}}{\partial x'_j\partial x'_k} \\
    = & \ R_{jk'}R_{il}R_{kl}R_{kl''} \frac{\partial^2 \sigmabar_{ll'}}{\partial x_{k'}\partial x_{l''}} = R_{jk'}R_{il}\delta_{ll''}\frac{\partial^2 \sigmabar_{ll'}}{\partial x_{k'}\partial x_{l''}} =  R_{il} \frac{\partial^2 \sigmabar_{ll'}}{\partial x_{k'}\partial x_{l}} R_{jk'} ,
\end{align*}
which implies that $(\nabla_{\bx'}(\nabla_{\bx'}\cdot \sigmabarb'))_{\stf} = \Rb (\nabla_{\bx}(\nabla_{\bx}\cdot \sigmabarb))_{\stf} \Rb^\intercal$ upon using the formula of $(\cdot)_{\stf}$ in \eqref{inv_gradv}. Furthermore, it is evident that $(\nabla^2_{\bx'}\theta')_\stf =\Rb (\nabla^2_{\bx}\theta)_\stf  \Rb^\intercal$. Therefore, the rotational invariance of \eqref{newr13:eqe1} has been verified. 

Overall, our moment equations \eqref{newr13:eqa}--\eqref{newr13:eqe1} satisfy the rotational invariance and, by encompassing the translational invariance, consequently satisfy the Galilean invariance.

\section{Tables of coefficients in the inverse-power-law model}
\label{app:coefficient}
Table \ref{tab:momentcoeff} lists the coefficients $k_i$ and $l_i$ of the moment equations in \eqref{newr13:eqa}--\eqref{newr13:eqe1} for some choices of parameter $\eta$ in the inverse-power-law model, and tables \ref{tab:bccoeff1} and \ref{tab:bccoeff2} list the coefficients $m_{ij}$ of the corresponding boundary conditions \eqref{eq:bcv}--\eqref{eq:bcm}.

\begin{table}
\centering \footnotesize
\begin{displaymath}
\begin{array}{| c | c | c | c | c | c |}
\hline
\hline
\eta &  k_{0}   &  k_{1}   &  k_{2}  &  k_{3} &  k_{4} \\
5 & 1 & 0 & 0 &  0 &  0 \\
7 & 9.9785\times 10^{-1} & 3.0773\times 10^{-3} & 1.2550\times 10^{-5} & 2.6072\times 10^{-3} & 4.8885\times 10^{-2} \\
10 & 9.9396\times 10^{-1} & 8.7436\times 10^{-3} & 4.5818\times 10^{-5} & 7.4080\times 10^{-3} & 8.1805\times 10^{-2}  \\ 
17 & 9.8883\times 10^{-1} & 1.6341\times 10^{-2} & 1.0021\times 10^{-4} & 1.3840\times 10^{-2} & 1.1124\times 10^{-1} \\
\infty & 9.7971\times 10^{-1} & 3.0261\times 10^{-2} & 2.0798\times 10^{-4} & 2.5607\times 10^{-2} & 1.5056\times 10^{-1} \\
\hline
\eta &  k_{5} &  k_{6} &  k_{7} & k_{8}  &  k_{9} \\
5 & 1 & 1 & 1 & 1 & 1 \\
7 & 9.9794\times 10^{-1} & 8.9911\times 10^{-1} & 9.7119\times 10^{-1} & 8.6773\times 10^{-1} & 9.6577\times 10^{-1} \\
10 & 9.9420\times 10^{-1} & 8.4173\times 10^{-1} & 9.5624\times 10^{-1} & 7.7962\times 10^{-1} & 9.4997\times 10^{-1} \\
17 & 9.8926\times 10^{-1} & 7.9687\times 10^{-1} & 9.4576\times 10^{-1} & 7.0221\times 10^{-1} & 9.4062\times 10^{-1} \\
\infty & 9.8041\times 10^{-1} & 7.4535\times 10^{-1} & 9.3584\times 10^{-1} & 6.0171\times 10^{-1} & 9.3491\times 10^{-1} \\
\hline
\eta &  k_{10} &  l_1 & l_2   \\
5 &  0 &  1  & 1 \\
7 & 2.8590\times 10^{-7} & 9.9420\times 10^{-1} & 9.9545\times 10^{-1} \\
10 & 1.1896\times 10^{-6} & 9.8385\times 10^{-1} & 9.8727\times 10^{-1} \\
17 & 2.8475\times 10^{-6} & 9.7049\times 10^{-1} & 9.7666\times 10^{-1} \\
\infty & 6.3621\times 10^{-6} & 9.4741\times 10^{-1} & 9.5812\times 10^{-1} \\
\hline
\hline
\end{array}
\end{displaymath}
\caption{Coefficients in moment equations \eqref{newr13:eqa}--\eqref{newr13:eqe1} for some power indices $\eta$ in the inverse-power-law model.\label{tab:momentcoeff}} 
\end{table}

\begin{table}
\centering \small
\begin{displaymath}
\begin{array}{| c | c | c | c | c | c |}
\hline
\hline
\eta &  m_{11}   &  m_{12}   &  m_{13}  &  m_{14} &  m_{15}  \\
5 & \frac{2}{\sqrt{2 \pi }} & \frac{1}{2 \sqrt{2 \pi }} & \frac{8}{5\sqrt{2 \pi }} & \frac{48}{25\sqrt{2 \pi }} & 0  \\
7 & 8.2699\times 10^{-1} & 1.7756\times 10^{-1} & 5.9524\times 10^{-1} & 7.7155\times 10^{-1} & 4.0454\times 10^{-2} \\
10 & 8.4706\times 10^{-1} & 1.6277\times 10^{-1} & 5.7153\times 10^{-1} & 7.7914\times 10^{-1} & 6.9432\times 10^{-2} \\ 
17 & 8.6507\times 10^{-1} & 1.4961\times 10^{-1} & 5.5359\times 10^{-1} & 7.8843\times 10^{-1} & 9.6598\times 10^{-2}  \\
\infty & 8.8890\times 10^{-1} & 1.3234\times 10^{-1} & 5.3390\times 10^{-1} & 8.0443\times 10^{-1} & 1.3481\times 10^{-1}  \\
\hline
\eta & m_{21} &  m_{22}   &  m_{23}  &  m_{24} &  m_{25} \\
5 & - & - & - & - & - \\
7 & 4.9821\times 10^{-3} & 1.9210\times 10^{-2} & 3.5585\times 10^{-3} & 4.6126\times 10^{-3} & 2.4185\times 10^{-4} \\
10 & 7.5057\times 10^{-3} & 3.1012\times 10^{-2} & 4.9556\times 10^{-3} & 6.7557\times 10^{-3} & 6.0203\times 10^{-4} \\
17 & 9.2216\times 10^{-3} & 4.0714\times 10^{-2} & 5.6679\times 10^{-3} & 8.0723\times 10^{-3} & 9.8900\times 10^{-4} \\
\infty & 1.0730\times 10^{-2} & 5.2344\times 10^{-2} & 5.9826\times 10^{-3} & 9.0140\times 10^{-3} & 1.5106\times 10^{-3} \\
\hline
\eta & m_{26} &  m_{27}   &  m_{28} \\
5 & - & - & - \\
7 & 2.9445\times 10^{-2} & 1.1082\times 10^{-1} & 4.5197\times 10^{-4} \\
10 & 5.0347\times 10^{-2} & 1.9074\times 10^{-1} & 9.9954\times 10^{-4} \\
17 & 7.0089\times 10^{-2} & 2.6780\times 10^{-1} & 1.6422\times 10^{-3} \\
\infty & 9.8823\times 10^{-2} & 3.8322\times 10^{-1} & 2.6338\times 10^{-3} \\
\hline
\eta  & m_{31} &  m_{32} &  m_{33} &  m_{34} & m_{35}\\
5 & \frac{1}{\sqrt{2 \pi }} & \frac{1}{5 \sqrt{2 \pi }} & 0 & \frac{2}{\sqrt{2 \pi }} & 0 \\
7 & 4.0358\times 10^{-1} & 7.2977\times 10^{-2} & 5.8325\times 10^{-8} & 7.8808\times 10^{-1} & 7.6807\times 10^{-6} \\
10 & 4.0655\times 10^{-1} & 6.8340\times 10^{-2} & 2.4645\times 10^{-7} & 7.8723\times 10^{-1} & 2.8477\times 10^{-5} \\
17 & 4.0908\times 10^{-1} & 6.4237\times 10^{-2} & 5.9816\times 10^{-7} & 7.9036\times 10^{-1} & 6.3150\times 10^{-5} \\
\infty & 4.1227\times 10^{-1} & 5.8927\times 10^{-2} & 1.3613\times 10^{-6} & 8.0020\times 10^{-1} & 1.3351\times 10^{-4} \\
\hline
\eta  & m_{41}  &  m_{42} &  m_{43} &  m_{44} &  m_{45} \\
5 & \frac{1}{\sqrt{2 \pi }} & \frac{11}{5\sqrt{2 \pi }} & 0 & \frac{2}{\sqrt{2 \pi }} & 0 \\
7 & 3.4342\times 10^{-1} & 8.5528\times 10^{-1} & 5.6647\times 10^{-8} & 7.6541\times 10^{-1} & 7.4598\times 10^{-6} \\
10 & 3.0441\times 10^{-1} & 8.4194\times 10^{-1} & 2.3349\times 10^{-7} & 7.4584\times 10^{-1} & 2.6980\times 10^{-5} \\
17 & 2.6826\times 10^{-1} & 8.3207\times 10^{-1} & 5.5187\times 10^{-7} & 7.2919\times 10^{-1} & 5.8263\times 10^{-5} \\
\infty & 2.1746\times 10^{-1} & 8.2380\times 10^{-1} & 1.2025\times 10^{-6} & 7.0683\times 10^{-1} & 1.1793\times 10^{-4} \\
\hline
\eta  & m_{46}   &  m_{47}  &  m_{48} \\
5 & 0 & 0 & \frac{24}{5} \\
7 & 1.2222\times 10^{-1} & 2.4505\times 10^{-1} & 4.7008 \\
10 & 2.0315\times 10^{-1} & 4.0914\times 10^{-1} & 4.6366 \\
17 & 2.7309\times 10^{-1} & 5.5322\times 10^{-1} & 4.5779 \\
\infty & 3.6056\times 10^{-1} & 7.3790\times 10^{-1} & 4.4916 \\
\hline
\hline
\end{array}
\end{displaymath}
\caption{Part $1$ of the coefficients in boundary conditions \eqref{eq:bcv}--\eqref{eq:bcm} for some power indices $\eta$ in the inverse-power-law model.\label{tab:bccoeff1}} 
\end{table}

\begin{table}
\centering \small
\begin{displaymath}
\begin{array}{| c | c | c | c | c | c |}
\hline
\hline
\eta &  m_{51} &  m_{52} &  m_{53} & m_{54}   &  m_{55} \\
5 & - & - & - & - & - \\
7 & 3.4282\times 10^{-2} & 8.5377\times 10^{-2} & 5.6547\times 10^{-9} & 7.6406\times 10^{-2} & 7.4466\times 10^{-7} \\
10 & 5.2206\times 10^{-2} & 1.4439\times 10^{-1} & 4.0044\times 10^{-8} & 1.2791\times 10^{-1} & 4.6270\times 10^{-6} \\
17 & 6.4303\times 10^{-2} & 1.9945\times 10^{-1} & 1.3228\times 10^{-7} & 1.7479\times 10^{-1} & 1.3966\times 10^{-5} \\
\infty & 7.3794\times 10^{-2} & 2.7955\times 10^{-1} & 4.0806\times 10^{-7} & 2.3986\times 10^{-1} & 4.0020\times 10^{-5} \\
\hline
\eta &  m_{56} &  m_{57} &  m_{58} \\
5 & - & - & - \\
7 & 2.7746\times 10^{-2} & 5.5632\times 10^{-2} & 4.7964\times 10^{-1} \\
10 & 6.0867\times 10^{-2} & 1.2259\times 10^{-1} & 8.1033\times 10^{-1} \\
17 & 1.0110\times 10^{-1} & 2.0482\times 10^{-1} & 1.1154 \\
\infty & 1.7187\times 10^{-1} & 3.5174\times 10^{-1} & 1.5445 \\
\hline
\eta &  m_{61} & m_{62}   &  m_{63}  &  m_{64} &  m_{65} \\
5 & \frac{2}{5\sqrt{2 \pi }} & \frac{7}{5\sqrt{2 \pi }} & \frac{8}{25\sqrt{2 \pi }} & \frac{48}{125\sqrt{2 \pi }} & 0 \\
7 & 1.5598\times 10^{-1} & 6.0144\times 10^{-1} & 1.1141\times 10^{-1} & 1.4441\times 10^{-1} & 7.5718\times 10^{-3} \\
10 & 1.5101\times 10^{-1} & 6.2395\times 10^{-1} & 9.9703\times 10^{-2} & 1.3592\times 10^{-1} & 1.2112\times 10^{-2} \\
17 & 1.4747\times 10^{-1} & 6.5107\times 10^{-1} & 9.0636\times 10^{-2} & 1.2909\times 10^{-1} & 1.5815\times 10^{-2} \\
\infty & 1.4418\times 10^{-1} & 7.0335\times 10^{-1} & 8.0389\times 10^{-2} & 1.2112\times 10^{-1} & 2.0298\times 10^{-2} \\
\hline
\eta &  m_{66} &  m_{67} & m_{68}  &  m_{69} \\
5 & 0 & 2 & 0 & 0 \\
7 & 2.1284\times 10^{-7} & 2.0460 & 9.0196\times 10^{-8} & 3.5825\times 10^{-7} \\
10 & 1.6606\times 10^{-6} & 2.0707 & 3.0801\times 10^{-7} & 2.8135\times 10^{-6} \\
17 & 5.1333\times 10^{-6} & 2.1281 & 6.3694\times 10^{-7} & 8.7714\times 10^{-6} \\
\infty & 1.5172\times 10^{-5} & 2.2756 & 1.2781\times 10^{-6} & 2.6311\times 10^{-5} \\
\hline
\eta &  m_{71} &  m_{81} \\
5 & \frac{1}{2 \sqrt{2 \pi }} & \frac{1}{2 \sqrt{2 \pi }} \\
7 & 2.0678\times 10^{-1} & 2.0678\times 10^{-1} \\
10 & 2.0996\times 10^{-1} & 2.0996\times 10^{-1} \\
17 & 2.1152\times 10^{-1} & 2.1152\times 10^{-1} \\
\infty & 2.1169\times 10^{-1} & 2.1169\times 10^{-1} \\
\hline 
\hline
\end{array}
\end{displaymath}
\caption{Part $2$ of the coefficients in boundary conditions \eqref{eq:bcv}--\eqref{eq:bcm} for some power indices $\eta$ in the inverse-power-law model.\label{tab:bccoeff2}} 
\end{table}

\section{Computation of the coefficients $\boldsymbol{\beta}$'s and $\boldsymbol{\gamma}$'s in the asymptotic analysis}
This section establishes the relationship between the coefficients $a_{lnn'}$ and $\beta$'s and $\gamma$'s in \eqref{eq:order0}--\eqref{eq:order3}. For any fixed $l$, we consider $a_{lnn'}$ as an infinite matrix. We can then define $b_{lnn'}^{(n_0)}$ as the inverse of a submatrix of $a_{lnn'}$:
\begin{displaymath}
\sum_{n' = n_0}^{+\infty} a_{lnn'} b_{ln'n''}^{(n_0)} = \delta_{nn''}, \qquad \forall n, n'' = n_0, n_0+1, \ldots.
\end{displaymath}
Due to the conservation of mass, momentum and energy, the quantities $b_{0nn'}^{(0)}$, $b_{0nn'}^{(1)}$ and $b_{1nn'}^{(0)}$ do not exist (see \eqref{eq:a_zero}). Since the collision operator $\mathcal{L}$ is negative semidefinite and has a kernel of five dimensions, all other coefficients $b_{lnn'}^{(n_0)}$ with nonnegative $l$, $n$, $n'$ and $n_0$ are well defined. These coefficients have been defined in \cite{yang2024siap} where R13 equations for steady-state flows are derived.

In practice, the infinite matrices $a_{lnn'}$ are truncated and $b_{lnn'}^{(n_0)}$ are obtained by taking the inverse directly. Then the coefficients $\beta$'s and $\gamma$'s in \eqref{eq:order0}--\eqref{eq:order3} are given by
\begin{gather*}
\beta_1^{(1),n} = b_{11n}^{(1)}, \qquad \beta_2^{(1),n} = b_{20n}^{(0)}, \\
\gamma^{(2),n}_0 =  \sum_{n'=2}^{+\infty} \frac{b_{0nn'}^{(2)} (\sqrt{2n'+3} b_{11n'}^{(1)} - \sqrt{2n'} b_{11,n'-1}^{(1)})}{b_{111}^{(1)}}, \\
\gamma_{t1}^{(1),n} = \sum_{n'=2}^{\infty} \frac{b^{(2)}_{1nn'}b^{(1)}_{11n'}}{b^{(1)}_{111}}, \qquad \gamma^{(1),n}_{s1} = \sum_{n'=2}^{\infty}
  \frac{b_{1nn'}^{(2)} (\sqrt{2n'+5} b_{20n'}^{(0)} - \sqrt{2n'} b_{20,n'-1}^{(0)})}{b_{200}^{(0)}}, \\
\gamma_{t2}^{(1),n} = \sum_{n'=2}^{\infty} \frac{b^{(1)}_{2nn'}b^{(0)}_{20n'}}{b^{(0)}_{200}}, \qquad   \gamma^{(1),n}_{s2} = \frac{2}{5} \sum_{n'=1}^{\infty}
  \frac{b_{2nn'}^{(1)} (\sqrt{2n'+5} b_{11n'}^{(1)} - \sqrt{2(n'+1)} b_{11,n'+1}^{(1)})}{b_{111}^{(1)}}, \\
\gamma^{(2),n}_3 =  \frac{3}{7} \sum_{n'=0}^{+\infty} \frac{b_{3nn'}^{(0)}}{b_{200}^{(0)}} \left(\sqrt{2n'+7} b_{20n'}^{(0)} - \sqrt{2(n'+1)} b_{20,n'+1}^{(0)}\right).
\end{gather*}
These results are obtained by asymptotic analysis. The derivation is similar to the steady-state case. Here we omit the details and refer interested readers to \cite{yang2024siap}.

\section{Computation of the coefficients $\boldsymbol{c_k^{\ell,n}}$} \label{sec:c}
The expressions of $c_1^{1,n}$ and $c_2^{0,n}$ have been introduced in \eqref{eq:c11n_c200} and \eqref{eq:c_normalization}. In this appendix, we will focus on the coefficients appearing in \eqref{eq:2nd_phi}.

The coefficients $c_0^{2,n}$ must be chosen such that $\langle \phi^2, \psi^n - d_{02}^n \psi^2 \rangle = 0$ for all $n \geqslant 3$. This leads to
\begin{displaymath}
c_0^{2,n} = d_{02}^n c_0^{2,2}, \qquad \forall n \geqslant 3.
\end{displaymath}
We choose the coefficient $c_0^{2,2}$ such that $c_0^{2,2} > 0$ and
\begin{displaymath}
\sum_{n=2}^{+\infty} |c_0^{2,n}|^2 = 1.
\end{displaymath}
In our implementation, we truncate this infinite series up to $n = 20$.

The coefficients $c_1^{2,n}$ and $c_1^{3,n}$ must be chosen such that for all $n \geqslant 4$, $\langle \phi_i^2, \varphi_1^n \rangle = \langle \phi_i^3, \varphi_1^n \rangle = 0$ for
\begin{displaymath}
\varphi_1^n = \left(\psi_i^n - \frac{\beta_1^{(1),n}}{\beta_1^{(1),1}} \psi_i^1 \right) - d_{12}^n\left( \psi_i^2 - \frac{\beta_1^{(1),2}}{\beta_1^{(1),1}} \psi_i^1 \right) - d_{13}^n \left( \psi_i^3 - \frac{\beta_1^{(1),3}}{\beta_1^{(1),1}} \psi_i^1\right).
\end{displaymath}
Therefore, the coefficients $c_1^{2,n}$ and $c_1^{3,n}$ must satisfy
\begin{equation} \label{eq:c_eq}
\begin{gathered}
c_1^{2,n} - d_{12}^n c_1^{2,2} - d_{13}^n c_1^{2,3} + \left( \frac{\beta_1^{(1),n}}{\beta_1^{(1),1}} - \frac{\beta_1^{(1),2}}{\beta_1^{(1),1}} d_{12}^n -\frac{\beta_1^{(1),3}}{\beta_1^{(1),1}} d_{13}^n \right) \sum_{n'=2}^{+\infty} \frac{\beta_1^{(1),n'}}{\beta_1^{(1),1}} c_1^{2,n'} = 0, \\
c_1^{3,n} - d_{12}^n c_1^{3,2} - d_{13}^n c_1^{3,3} + \left( \frac{\beta_1^{(1),n}}{\beta_1^{(1),1}} - \frac{\beta_1^{(1),2}}{\beta_1^{(1),1}} d_{12}^n -\frac{\beta_1^{(1),3}}{\beta_1^{(1),1}} d_{13}^n \right) \sum_{n'=2}^{+\infty} \frac{\beta_1^{(1),n'}}{\beta_1^{(1),1}} c_1^{3,n'} = 0
\end{gathered}
\end{equation}
for all $n \geqslant 4$. The linear systems for $c_1^{2,n}$ and $c_1^{3,n}$ are actually the same, and we just need to find linearly independent solutions for them. Notice that we need to find all coefficients $c_1^{2,n}$ and $c_1^{3,n}$ for $n \geqslant 2$, while the linear equations \eqref{eq:c_eq} are defined only for $n \geqslant 4$, the solutions are expected to form a two-dimensional linear space. Then $c_1^{2,n}$ and $c_1^{3,n}$ should form a basis of the space. In our implementation, we choose $c_1^{2,n}$ and $c_1^{3,n}$ such that
\begin{itemize}
\item The norms of $\phi_i^2$ and $\phi_i^3$ are equal to $1$;
\item $\langle \phi_i^2, \phi_i^3 \rangle = 0$;
\item $c_1^{2,2} > 0$, $c_1^{2,3} = 0$ and $c_1^{3,3} > 0$.
\end{itemize}
The last condition is to guarantee that when the collision model tends to Maxwell molecules, we have $\phi_i^2 \rightarrow \psi_i^2$ and $\phi_i^3 \rightarrow \psi_i^3$. To solve \eqref{eq:c_eq}, the system is again truncated up to $n = 20$.

The determination of $c_2^{1,n}$ and $c_2^{2,n}$ is similar to that of $c_1^{2,n}$ and $c_1^{3,n}$. The equations that $c_2^{1,n}$ and $c_2^{2,n}$ satisfy are
\begin{displaymath}
\begin{gathered}
c_2^{1,n} - d_{21}^n c_2^{1,1} - d_{22}^n c_2^{1,2} + \left( \frac{\beta_2^{(1),n}}{\beta_2^{(1),0}} - \frac{\beta_2^{(1),1}}{\beta_2^{(1),0}} d_{21}^n -\frac{\beta_2^{(1),2}}{\beta_2^{(1),0}} d_{22}^n \right) \sum_{n'=1}^{+\infty} \frac{\beta_2^{(1),n'}}{\beta_2^{(1),0}} c_2^{1,n'} = 0, \\
c_2^{2,n} - d_{21}^n c_2^{2,1} - d_{22}^n c_2^{2,2} + \left( \frac{\beta_2^{(1),n}}{\beta_2^{(1),0}} - \frac{\beta_2^{(1),1}}{\beta_2^{(1),0}} d_{21}^n -\frac{\beta_2^{(1),2}}{\beta_2^{(1),0}} d_{22}^n \right) \sum_{n'=1}^{+\infty} \frac{\beta_2^{(1),n'}}{\beta_2^{(1),0}} c_2^{2,n'} = 0.
\end{gathered}
\end{displaymath}
To determine these coefficients, we require that
\begin{itemize}
\item The norms of $\phi_{ij}^1$ and $\phi_{ij}^2$ are equal to $1$;
\item $\langle \phi_{ij}^1, \phi_{ij}^2 \rangle = 0$;
\item $c_2^{1,1} > 0$, $c_2^{1,2} = 0$ and $c_2^{2,2} > 0$.
\end{itemize}
When the collision model tends to Maxwell molecules, we have $\phi_{ij}^1 \rightarrow \psi_{ij}^1$ and $\phi_{ij}^2 \rightarrow \psi_{ij}^2$.

The determination of $c_3^{0,n}$ is similar to that of $c_0^{2,n}$. The solution is
\begin{displaymath}
c_3^{0,n} = d_{30} c_3^{0,0},
\end{displaymath}
where $c_3^{0,0}$ is selected to be positive and satisfies
\begin{displaymath}
\sum_{n=0}^{+\infty} |c_3^{0,n}|^2 = 1.
\end{displaymath}

\section{Expressions of $A_{ij}$}\label{sec:appendixAcoeff}
The expressions of $A_{ij}$ explicitly depends on the coefficients $c_k^{\ell,n}$, which are given as follows.
\begin{align*}
    & A_{45} = 3  \sum_{n=1}^{+\infty}c^{1,n}_1 \left(\sqrt{2n+5}c^{0,n}_2 - \sqrt{2n}c^{0,n-1}_2 \right),\\
    & A_{46} =  \sum_{n=1}^{+\infty}  c^{1,n}_1 \left( \sqrt{2n+3}c^{2,n}_0 - \sqrt{2(n+1)}c^{2,n+1}_0 \right),\\
    & A_{49} = 3  \sum_{n=1}^{+\infty}c^{1,n}_1 \left(\sqrt{2n+5}c^{1,n}_2 - \sqrt{2n}c^{1,n-1}_2 \right),\\
    & A_{4,10} = 3  \sum_{n=1}^{+\infty}c^{1,n}_1 \left(\sqrt{2n+5}c^{2,n}_2 - \sqrt{2n}c^{2,n-1}_2 \right),\\
    & A_{57} = 3   \sum_{n=0}^{+\infty} c^{0,n}_2 \left( \sqrt{2n+5}c^{2,n}_1 - \sqrt{2(n+1)} c^{2,n+1}_1 \right) ,\\
    & A_{58} = 3   \sum_{n=0}^{+\infty} c^{0,n}_2 \left( \sqrt{2n+5}c^{3,n}_1 - \sqrt{2(n+1)} c^{3,n+1}_1 \right) ,\\
    & A_{5,11} = \frac{15}{2}\sum_{n=0}^{+\infty} c^{0,n}_2 \left( \sqrt{2n+7} c^{0,n}_3 - \sqrt{2n}c^{0,n-1}_3 \right) ,\\
    & A_{67} = \sum_{n=2}^{+\infty} c^{2,n}_0 \left( \sqrt{2n+3} c^{2,n}_1 - \sqrt{2n}c^{2,n-1}_1 \right), \\
    & A_{68} = \sum_{n=2}^{+\infty} c^{2,n}_0 \left( \sqrt{2n+3} c^{3,n}_1 - \sqrt{2n}c^{3,n-1}_1 \right),\\
    & A_{79} = 3  \sum_{n=1}^{+\infty}c^{2,n}_1 \left(\sqrt{2n+5}c^{1,n}_2 - \sqrt{2n}c^{1,n-1}_2 \right),\\
    & A_{7,10} = 3  \sum_{n=1}^{+\infty}c^{2,n}_1 \left(\sqrt{2n+5}c^{2,n}_2 - \sqrt{2n}c^{2,n-1}_2 \right),\\
    & A_{89} = 3  \sum_{n=1}^{+\infty}c^{3,n}_1 \left(\sqrt{2n+5}c^{1,n}_2 - \sqrt{2n}c^{1,n-1}_2 \right),\\
    & A_{8,10} = 3  \sum_{n=1}^{+\infty}c^{3,n}_1 \left(\sqrt{2n+5}c^{2,n}_2 - \sqrt{2n}c^{2,n-1}_2 \right),\\
    & A_{9,11} = \frac{15}{2}  \sum_{n=0}^{+\infty} c^{1,n}_2 \left( \sqrt{2n+7} c^{0,n}_3 - \sqrt{2n}c^{0,n-1}_3 \right)  ,\\
    & A_{10,11} = \frac{15}{2}  \sum_{n=0}^{+\infty} c^{2,n}_2 \left( \sqrt{2n+7} c^{0,n}_3 - \sqrt{2n}c^{0,n-1}_3 \right) .
\end{align*}

\bibliographystyle{abbrv}
\bibliography{main}

\begin{thebibliography}{10}

\bibitem{Onsager_2019book}
A.~Agrawal, H.~M. Kushwaha, and R.~S. Jadhav.
\newblock {\em Microscale Flow and Heat Transfer}.
\newblock Springer Cham, 2019.

\bibitem{Alekseenko2022jcp_data}
A.~Alekseenko, R.~Martin, and A.~Wood.
\newblock Fast evaluation of the {Boltzmann} collision operator using data driven reduced order models.
\newblock {\em Journal of Computational Physics}, 470:111526, 2022.

\bibitem{Beckmann2018}
A.~F. Beckmann, A.~S. Rana, M.~Torrilhon, and H.~Struchtrup.
\newblock Evaporation boundary conditions for the linear r13 equations based on the onsager theory.
\newblock {\em Entropy}, 20(9):680, 2018.

\bibitem{BGK1954}
P.~L. Bhatnagar, E.~P. Gross, and M.~Krook.
\newblock A model for collision processes in gases. i. small amplitude processes in charged and neutral one-component systems.
\newblock {\em Phys. Rev.}, 94:511--525, May 1954.

\bibitem{BirdDSMC}
G.~A. {Bird}.
\newblock {Direct Simulation and the {Boltzmann} Equation}.
\newblock {\em Physics of Fluids}, 13(11):2676--2681, Nov. 1970.

\bibitem{BirdBook1994}
G.~A. {Bird}.
\newblock {\em Molecular Gas Dynamics And The Direct Simulation Of Gas Flows}.
\newblock Clarendon Press, Oxford, 05 1994.

\bibitem{bobylev2006instabilities}
A.~V. Bobylev.
\newblock Instabilities in the {Chapman-Enskog} expansion and hyperbolic {Burnett} equations.
\newblock {\em Journal of Statistical Physics}, 124:371--399, 2006.

\bibitem{bunger2023KRM}
J.~B\"unger, E.~Christhuraj, A.~Hanke, and M.~Torrilhon.
\newblock Structured derivation of moment equations and stable boundary conditions with an introduction to symmetric, trace-free tensors.
\newblock {\em Kinetic and Related Models}, 16(3):458--494, 2023.

\bibitem{lin2024sinum}
Z.~Cai, B.~Lin, and M.~Lin.
\newblock A positive and moment-preserving {Fourier} spectral method.
\newblock {\em SIAM Journal on Numerical Analysis}, 62(1):273--294, 2024.

\bibitem{Cai2015}
Z.~Cai and M.~Torrilhon.
\newblock Approximation of the linearized {B}oltzmann collision operator for hard-sphere and inverse-power-law models.
\newblock {\em J. Comput. Phys.}, 295:617--643, 2015.

\bibitem{yang2024siap}
Z.~Cai, M.~Torrilhon, and S.~Yang.
\newblock Linear regularized 13-moment equations with {Onsager} boundary conditions for general gas molecules.
\newblock {\em SIAM Journal on Applied Mathematics}, 84(1):215--245, 2024.

\bibitem{Cai2020regularized}
Z.~Cai and Y.~Wang.
\newblock Regularized 13-moment equations for inverse power law models.
\newblock {\em J. Fluid Mech.}, 894:A12, 2020.

\bibitem{chapman1970mathematical}
S.~Chapman and T.~G. Cowling.
\newblock {\em The Mathematical Theory of Non-Uniform Gases}.
\newblock Cambridge University Press, 1970.

\bibitem{Claydon2017}
R.~Claydon, A.~Shrestha, A.~S. Rana, J.~E. Sprittles, and D.~A. Lockerby.
\newblock Fundamental solutions to the regularised 13-moment equations: efficient computation of three-dimensional kinetic effects.
\newblock {\em Journal of Fluid Mechanics}, 833:R4, 2017.

\bibitem{Dimarco2018jcp}
G.~Dimarco, R.~Loubère, J.~Narski, and T.~Rey.
\newblock An efficient numerical method for solving the {Boltzmann} equation in multidimensions.
\newblock {\em Journal of Computational Physics}, 353:46--81, 2018.

\bibitem{Dimarco_Pareschi_2014_acta}
G.~Dimarco and L.~Pareschi.
\newblock Numerical methods for kinetic equations.
\newblock {\em Acta Numerica}, 23:369–--520, 2014.

\bibitem{Gamba2017fastspectral}
I.~M. Gamba, J.~R. Haack, C.~D. Hauck, and J.~Hu.
\newblock A fast spectral method for the {Boltzmann} collision operator with general collision kernels.
\newblock {\em SIAM Journal on Scientific Computing}, 39(4):B658--B674, 2017.

\bibitem{Grad13_1949}
H.~Grad.
\newblock On the kinetic theory of rarefied gases.
\newblock {\em Communications on Pure and Applied Mathematics}, 2(4):331--407, 1949.

\bibitem{Han2019learningclosure}
J.~Han, C.~Ma, Z.~Ma, and W.~E.
\newblock Uniformly accurate machine learning-based hydrodynamic models for kinetic equations.
\newblock {\em Proceedings of the National Academy of Sciences}, 116(44):21983--21991, 2019.

\bibitem{Hu2020}
Z.~Hu, S.~Yang, and Z.~Cai.
\newblock Flows between parallel plates: Analytical solutions of regularized 13-moment equations for inverse-power-law models.
\newblock {\em Phys. Fluids}, 32(12):122007, 2020.

\bibitem{jadhav2023oburnett}
R.~S. Jadhav, U.~Yadav, and A.~Agrawal.
\newblock {OBurnett} equations: thermodynamically consistent continuum theory beyond the {Navier-Stokes} regime.
\newblock {\em ASME Journal of Heat and Mass Transfer}, 145(6):060801, 2023.

\bibitem{Jiang2013}
K.~Jiang, D.~Sun, and K.~Toh.
\newblock {\em Discrete Geometry and Optimization}, chapter Solving Nuclear Norm Regularized and Semidefinite Matrix Least Squares Problems with Linear Equality Constraints, pages 133--162.
\newblock Springer, 2013.

\bibitem{jin2001regularization}
S.~Jin and M.~Slemrod.
\newblock Regularization of the {Burnett} equations via relaxation.
\newblock {\em Journal of Statistical Physics}, 103:1009--1033, 2001.

\bibitem{Maxwell1879bc}
J.~C. Maxwell.
\newblock On stresses in rarified gases arising from inequalities of temperature.
\newblock {\em Philosophical Transactions of the Royal Society of London}, 170:231--256, 1879.

\bibitem{Mieussens2000jcp}
L.~Mieussens.
\newblock Discrete-velocity models and numerical schemes for the {Boltzmann-BGK} equation in plane and axisymmetric geometries.
\newblock {\em Journal of Computational Physics}, 162(2):429--466, 2000.

\bibitem{muller2003extended}
I.~M{\"u}ller, D.~Reitebuch, and W.~Weiss.
\newblock Extended thermodynamics--consistent in order of magnitude.
\newblock {\em Continuum Mechanics and Thermodynamics}, 15:113--146, 2003.

\bibitem{Myong1999nccr}
R.~S. Myong.
\newblock {Thermodynamically consistent hydrodynamic computational models for high-{Knudsen}-number gas flows}.
\newblock {\em Physics of Fluids}, 11(9):2788--2802, 09 1999.

\bibitem{Ottinger2010thermo}
H.~C. \"Ottinger.
\newblock Thermodynamically admissible 13 moment equations from the {Boltzmann} equation.
\newblock {\em Phys. Rev. Lett.}, 104:120601, Mar 2010.

\bibitem{Rana2013}
A.~Rana, M.~Torrilhon, and H.~Struchtrup.
\newblock A robust numerical method for the {R}13 equations of rarefied gas dynamics: Application to lid driven cavity.
\newblock {\em J. Comput. Phys.}, 236:169--186, 2013.

\bibitem{rana2015numerical}
A.~S. Rana, A.~Mohammadzadeh, and H.~Struchtrup.
\newblock A numerical study of the heat transfer through a rarefied gas confined in a microcavity.
\newblock {\em Continuum Mechanics and Thermodynamics}, 27:433--446, 2015.

\bibitem{sarna2018stable}
N.~Sarna and M.~Torrilhon.
\newblock On stable wall boundary conditions for the hermite discretization of the linearised {Boltzmann} equation.
\newblock {\em Journal of Statistical Physics}, 170:101--126, 2018.

\bibitem{Agrawal2016pre}
N.~Singh and A.~Agrawal.
\newblock Onsager's-principle-consistent 13-moment transport equations.
\newblock {\em Phys. Rev. E}, 93:063111, Jun 2016.

\bibitem{Struchtrup20052ndorder}
H.~Struchtrup.
\newblock Derivation of 13 moment equations for rarefied gas flow to second order accuracy for arbitrary interaction potentials.
\newblock {\em Multiscale Modeling \& Simulation}, 3(1):221--243, 2005.

\bibitem{Struchtrup2005macroscopic}
H.~Struchtrup.
\newblock {\em Macroscopic transport equations for rarefied gas flows}.
\newblock Springer Berlin, Heidelberg, 2005.

\bibitem{Struchtrup2017}
H.~Struchtrup, A.~Beckmann, A.~S. Rana, and A.~Frezzotti.
\newblock Evaporation boundary conditions for the {R}13 equations of rarefied gas dynamics.
\newblock {\em Physics of Fluids}, 29(9):092004, 2017.

\bibitem{StruchtrupR13_2003}
H.~Struchtrup and M.~Torrilhon.
\newblock {Regularization of {Grad}’s 13 moment equations: Derivation and linear analysis}.
\newblock {\em Physics of Fluids}, 15(9):2668--2680, 09 2003.

\bibitem{Struchtrup2007}
H.~Struchtrup and M.~Torrilhon.
\newblock $h$ theorem, regularization, and boundary conditions for linearized 13 moment equations.
\newblock {\em Phys. Rev. Lett.}, 99:014502, 2007.

\bibitem{Struchtrup2013pof}
H.~Struchtrup and M.~Torrilhon.
\newblock {Regularized 13 moment equations for hard sphere molecules: Linear bulk equations}.
\newblock {\em Physics of Fluids}, 25(5):052001, 05 2013.

\bibitem{Taheri2012pre_r13_app}
P.~Taheri and M.~Bahrami.
\newblock Macroscopic description of nonequilibrium effects in thermal transpiration flows in annular microchannels.
\newblock {\em Phys. Rev. E}, 86:036311, Sep 2012.

\bibitem{Theisen2021}
L.~Theisen and M.~Torrilhon.
\newblock fenicsr13: A tensorial mixed finite element solver for the linear r13 equations using the fenics computing platform.
\newblock {\em ACM Trans. Math. Softw.}, 47(2), 2021.

\bibitem{r13_compare_2017}
M.~Y. Timokhin, H.~Struchtrup, A.~A. Kokhanchik, and Y.~A. Bondar.
\newblock {Different variants of {R13} moment equations applied to the shock-wave structure}.
\newblock {\em Physics of Fluids}, 29(3):037105, 03 2017.

\bibitem{Torrilhon2017}
M.~Torrilhon and N.~Sarna.
\newblock Hierarchical {Boltzmann} simulations and model error estimation.
\newblock {\em Journal of Computational Physics}, 342:66--84, 2017.

\bibitem{Torrilhon2008}
M.~Torrilhon and H.~Struchtrup.
\newblock Boundary conditions for regularized 13-moment-equations for micro-channel-flows.
\newblock {\em J. Comput. Phys.}, 227(3):1982--2011, 2008.

\bibitem{Wu2014}
L.~Wu, J.~M. Reese, and Y.~Zhang.
\newblock Solving the boltzmann equation deterministically by the fast spectral method: application to gas microflows.
\newblock {\em Journal of Fluid Mechanics}, 746:53–--84, 2014.

\bibitem{Agrawal2023aip}
U.~Yadav, A.~Jonnalagadda, and A.~Agrawal.
\newblock {Third-order accurate 13-moment equations for non-continuum transport phenomenon}.
\newblock {\em AIP Advances}, 13(4):045311, 04 2023.

\bibitem{yong2015cdf}
Y.~Zhu, L.~Hong, Z.~Yang, and W.~Yong.
\newblock Conservation-dissipation formalism of irreversible thermodynamics.
\newblock {\em Journal of Non-Equilibrium Thermodynamics}, 40(2):67--74, 2015.

\end{thebibliography}

\end{document}